\definecolor{BlueGreen}{cmyk}{0.85,0,0.33,0}
\definecolor{dgreen}{rgb}{0.,0.6,0.}
\newtheorem{theorem}{Theorem}[section]
\newtheorem{corollary}[theorem]{Corollary}
\newtheorem{lemma}[theorem]{Lemma}
\newtheorem{proposition}[theorem]{Proposition}
\theoremstyle{definition}
\newtheorem{definition}[theorem]{Definition}
\newtheorem{remark}[theorem]{Remark}
\newtheorem{assumption}[theorem]{Assumption}
\newtheorem{example}[theorem]{Example}
\numberwithin{equation}{section}
\renewcommand{\epsilon}{\varepsilon}
\newcommand{\eps}{\varepsilon}
\newcommand{\calE}{\mathcal{E}}
\newcommand{\calD}{\mathcal{D}}
\newcommand{\calN}{\mathcal{N}}
\newcommand{\calK}{\mathcal{K}}
\newcommand{\calU}{\mathcal{U}}
\newcommand{\R}{\mathbb{R}}
\renewcommand{\P}{\mathbb{P}}
\newcommand{\E}{\mathbb{E}}
\newcommand{\bone}{\mathbbm{1}}
\newcommand{\LRa}{\Longleftrightarrow}
\newcommand{\mc}[1]{\mathcal{#1}}
\newcommand{\ld}{\lambda}
\newcommand{\prob}{\mathbb{P}}
\newcommand{\prt}{\partial}
\newcommand{\ds}{\displaystyle}
\DeclareMathOperator{\Leb}{Leb}
\DeclareMathOperator{\argmin}{argmin}
\begin{document}

\title{Billiards with Markovian reflection laws} 

\author{Clayton Barnes, Krzysztof Burdzy and Carl-Erik Gauthier}

\address{Department of Mathematics, Box 354350, University of Washington, Seattle, WA 98195}
\email{clayleroy2@gmail.com or clayton.barnes@unine.ch}
\email{burdzy@uw.edu}
\email{carlgaut@uw.edu or carlerik.gauthier@gmail.com}

\thanks{KB's research was supported in part by Simons Foundation Grant 506732. 
CEG's research was supported by the Swiss National Foundation for Research Grant P2NEP2\_171951. }

\keywords{Reflection distribution, stationary distribution,  billiards}
\subjclass{Primary 60J99; Secondary 60K35}

\pagestyle{headings}

 	\begin{abstract} 
We construct a class of reflection laws for billiard processes in the unit interval whose stationary distribution for the billiard position and its velocity is the product of the uniform distribution and the standard normal distribution. These billiard processes have Markovian reflection laws, meaning their velocity is constant between reflections but changes in a Markovian way at reflection times. 
 	\end{abstract}
 	\maketitle
 	
\section{Introduction}
\noindent
Consider a  billiard process $\{(X(t), L(t)), t\geq 0\}$ with values in $ [0, 1] \times \R$, where $X$ represents the billiard  position, reflecting at the endpoints 0 and 1, and $L$ represents the velocity of $X$. 
Under the totally elastic collision assumption, i.e., when the kinetic energy is preserved, the long run  distribution of $X$ is uniform in $[0,1]$ and the speed, i.e. $|L|$, is constant. 

The Boltzmann-Gibbs distribution assigns a probability proportional to $\exp(-c \calE(x))$ to a state $x$ of a physical system, where $\calE(x)$ is the energy of the state $x$. This suggests that,  if the process $X$ does not move in a potential, i.e., the particle $X$ does not have potential energy, then the probability of a state $(x, \ell)$ of $(X,L)$, in the stationary regime, should be proportional to $\exp (-c \ell^2)$ because the kinetic energy is proportional to $\ell^2$. In other words,  position should be distributed uniformly in the interval $[0,1]$ and  velocity should be normally distributed in the stationary regime. For this  to be true, speed (i.e., the norm of velocity) must change at reflection times. We will present examples of Markovian reflection laws for billiard processes giving rise to the stationary density of the form $c_1\exp (-c_2 \ell^2)$. 

We will now state our main result and discuss related articles which inspired this research and provided some of the techniques used in this paper.

\subsection{Main result}

We will define our process on an interval $[0,T)$, possibly random, for some $0< T \leq \infty$, because we cannot assume from the outset that the process is well defined for all times $t>0$.

\begin{definition}\label{def:billiardProcess}
A process $\{(X(t), L(t)): t \in [0, T)\} $  with values in $ [0, 1] \times \R$ will be called a \emph{billiard process with Markovian reflections}
if and only if 
\begin{enumerate}[label = (\roman*)]
\item There exists an infinite sequence of random times $0=t_0<t_1<t_2 < \dots$ such that $\sup_i t_i  =T$.
\item $X(t) \in\{0,1\}$ if and only if $t=t_i$ for some $i$, a.s.
\item $L(t_0), L(t_1), \dots$, is a  Markov chain with non-zero values. The sign of $L(t_i)$ alternates, i.e., $L(t_i)L(t_{i+1}) < 0$ for every $i$. 
\item  $L$ is  constant on $[t_i, t_{i + 1})$ for every $i$, a.s.
\item  $\ds X(t) = \int_0^tL(s)ds + X(0)$ for all $0\leq t<T$, a.s.
\end{enumerate}
\end{definition}

\medskip

A billiard process with Markovian reflections is a billiard whose 
velocity after reflection is random; it depends  on the incoming velocity and only on the incoming velocity.

Our notation for distributions and conditional distributions will be $\mc{L}(\,\cdot\,)$ and $\mc{L}(\,\cdot\,\mid\,\cdot\,)$.

 To construct a billiard process with Markovian reflections we need an initial condition $(X(0), L(0))$ and the Markov chain determining the laws of reflection.
This is sufficient  to construct a billiard process with Markovian reflections  on the time interval $[0, \sup_j t_j)$; see Section \ref{Sect3}.

Let  $\calU(0, 1)$ denote the uniform distribution on
$[0, 1]$ and let $\calN(0, 1)$ be the standard normal distribution on $\R$.
We will provide a large family of reflection laws $\mc{L}( L(t_{i + 1}) \mid L(t_i))$ for which $\calU(0, 1) \times \calN(0, 1)$
is the stationary distribution for the billiard process with Markovian reflections. The family will be indexed by an integer $N\geq 0$ and $\vec{\beta} = (\beta_0, \dots, \beta_N)\in (0,\infty)^{N+1}$.

\begin{definition}\label{def:vel}\
Consider an integer $N \geq 0$ and suppose that $\beta_i > 0$, $i = 0, \dots, N$ are reals such that $\sum_i\beta_i = 1$ and $\beta_i \geq \beta_{i+1} $ for all $i = 0, \dots, N$. Set $\ld_i = \beta_i/\sum_{j = 0}^i\beta_j$ for $i = 0, \dots, N$, and $\mu_i = \beta_{i + 1}/\sum_{j = 0}^i\beta_j, i = 0, \dots, N - 1$. 
Suppose that $\ell < 0$ and let
\begin{align}\label{s16.1}
p_N(\ell) &= \exp\left(-\frac{(1-\beta_N)\ell^2}{2\beta_N}\right),\\
p_k(\ell) &= 
\left(\prod_{m=k}^{N-1} \frac1{\mu_m}\right)
\sum_{j=k-1}^{N-1}
\exp\left(-\frac{\ell^2}{2\mu_j}\right)
\left(\prod_{\substack{m=k-1\\m\neq j}}^{N-1}\frac{1}{1/\mu_m- 1/\mu_j}\right), \quad k=1,\cdots, N-1, \label{s16.2}\\
p_0(\ell) &= 1 - \sum_{k=1}^N p_k(\ell). \label{s23.5}
\end{align}
Let
$Z(\ell)\in\{0,1, \dots, N\}$ be a random variable such that $\mathbb{P}(Z(\ell)=j)=p_j(\ell)$, $ j=0,\cdots, N$. Suppose that 
$E_0, E_1,\cdots , E_N$ are i.i.d.{} exponential, mean one, random variables
independent of $Z(\ell)$. Let $\mc{V}(\ell, N, \vec{\beta})$ be the distribution of
\begin{align}\label{s16.3}
 \sum_{j=0}^N \left(2\sum_{i=j}^N\lambda_i E_i\right)^{1/2} \bone_{Z(\ell)=j}.
\end{align}

We extend the definition of $\mc{V}(\ell, N, \vec{\beta})$ to $\ell>0$ by saying that $\mc{V}(\ell, N, \vec{\beta})$ is the distribution of $X$ if the distribution of $-X$ is $\mc{V}(-\ell, N, \vec{\beta})$.
\end{definition}

\begin{remark}\label{def:velold}
 Some factors in the product on the right hand side of \eqref{s16.2} are negative so it is not obvious that $p_k(\ell)$'s are non-negative. 
In fact, this is the case, as can be seen  from Proposition \ref{LevelDistrib} and Theorem \ref{ThinBoundaries} (i). The same results imply that $\sum_{k=0}^N p_k(\ell)=1$.
\end{remark}

\begin{example}\label{s17.1}
 Formulas \eqref{s16.1}-\eqref{s16.3} are complicated so we present three concrete examples.

(i)
In the case $N=0$ we necessarily have $\beta_0 =1$. If $\ell<0$, the distribution 
$\mc{V}(\ell, 0, (1))$ is the law of $\sqrt{2E_0}$, where $E_0$ is the exponential distribution with mean 1. For $\ell >0$, $\mc{V}(\ell, 0, (1))$ is the law of $-\sqrt{2E_0}$.

The distribution $\mc{V}(\ell, 0, (1))$  of $\sqrt{2E_0}$
is known as the Rayleigh distribution with parameter one.

(ii) Next consider the case $N=1$. Suppose that $0<\beta_1\leq 1/2$, $\ell<0$ and let
\begin{align}\label{s23.4}
p_0&= \exp\left(-\frac{(1-\beta_1)\ell^2}{2\beta_1}\right),\qquad
p_1= 1-\exp\left(-\frac{(1-\beta_1)\ell^2}{2\beta_1}\right).
\end{align}
Suppose that the following three random variables are independent: two mean-one exponentials $E_0$ and $E_1$, and $Z(\ell)$  such that $\mathbb{P}(Z(\ell)=j)= p_j(\ell)$, $j=0,1$. 
Although $\beta_0$ does not enter the following formula, we note that necessarily $\beta_0=1-\beta_1$. The distribution 
$\mc{V}(\ell, 1, (\beta_0,\beta_1))$ is the law of
\begin{align*}
\sqrt{2\left(E_0 + \beta_1 E_1\right)}\bone_{Z(\ell)=0}+ \sqrt{2\beta_1 E_1}\bone_{Z(\ell)=1}.
\end{align*}

(iii) Suppose that $N\geq 2$ and let $\beta_i = 1/(N+1)$ for $i=0,\dots, N$.
Then $\mu_i = \lambda_i = 1/(i+1)$. Elementary calculations show that
formulas \eqref{s16.1}-\eqref{s23.5} for $p_k(\ell)$
reduce to the binomial probabilities with parameters $N$ and $q:=\exp(-\ell^2/2)$, i.e., 
\begin{align*}
p_k(\ell) = \binom N k q^ k (1-q) ^{N-k}, \qquad k=0,1,\cdots, N.
\end{align*}
Formula \eqref{s16.3} becomes 
\begin{align*}
 \sum_{j=0}^N \left(2\sum_{i=j}^N \frac{E_i}{i+1}\right)^{1/2} \bone_{Z(\ell)=j}.
\end{align*}

\end{example}

The next theorem is our main result.
Note that we may have different families of reflection laws for  reflections at 0 and 1. 

\begin{theorem}\label{mainTheorem}
Suppose that  $N^-, N^+\geq 0$ are integers, $\vec{\beta}^- \in (0,\infty)^{N^- + 1}$ and $\vec{\beta}^+ \in (0,\infty)^{N^+ + 1}$. 
\begin{enumerate}[label = (\roman*)]
\item There exists a billiard process $\{(X(t), L(t)) : t \in [0, \infty)\}$ with Markovian reflection laws
\begin{align*}
\mc{L}( L(t_{i + 1}) \mid L(t_i)=\ell)&=\mc{V}(\ell, N^-, \vec{\beta}^-),
\quad \text{  if  } \ell<0,\\
\mc{L}( L(t_{i + 1}) \mid L(t_i)=\ell)&=\mc{V}(\ell, N^+, \vec{\beta}^+),
\quad \text{  if  } \ell>0.
\end{align*}
\item $\calU(0, 1) \times \calN(0, 1)$ is the unique stationary distribution for $(X, L)$. 
\end{enumerate}
\end{theorem}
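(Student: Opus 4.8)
The plan is to split the theorem into the construction part (i) and the stationarity/uniqueness part (ii), and to reduce everything to a statement about the embedded Markov chain of velocities at reflection times.

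For part (i): the construction of the process is essentially done once we specify the initial distribution $(X(0),L(0))$ and the one-step reflection kernel. Given $(X(t_i),L(t_i))$ with $L(t_i)\neq 0$, the process moves with constant velocity $L(t_i)$ until it next hits $\{0,1\}$, which happens at a deterministic time $t_{i+1}=t_i+|X(t_i)-\{0\text{ or }1\}|/|L(t_i)|$ depending on the sign of $L(t_i)$; then one draws $L(t_{i+1})$ from $\mc{V}(L(t_i),N^\mp,\vec\beta^{\mp})$. Since $\mc{V}(\ell,N,\vec\beta)$ is supported on values of the opposite sign to $\ell$ and never assigns mass to $0$ (each term $(2\sum_{i\ge j}\lambda_iE_i)^{1/2}$ is a.s.\ strictly positive), properties (ii)--(iv) of Definition \ref{def:billiardProcess} hold by construction. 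The only real content in (i) is showing $\sup_i t_i=\infty$ a.s., i.e.\ no explosion: the inter-reflection times are $|X(t_i)\text{ or }1-X(t_i)|/|L(t_i)|$, and explosion would require $\sum_i 1/|L(t_i)|<\infty$ along a trajectory that keeps hitting the boundary. I would rule this out using the stationary distribution found in (ii): under $\mc{L}(L(t_{i+1})\mid L(t_i)=\ell)=\mc{V}(\ell,\cdot)$ the velocity chain has a stationary law with a density that decays like $|\ell|$ near $0$ (a size-biased half-normal), so $\mathbb{E}[1/|L(t_i)|]$ is finite in the stationary regime, forcing $\sum 1/|L(t_i)|=\infty$; a coupling/domination argument extends this to arbitrary initial conditions. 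This is where Proposition \ref{LevelDistrib} and Theorem \ref{ThinBoundaries} will be invoked.

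For part (ii), the heart of the matter is that $\mc{V}(\ell,N,\vec\beta)$ is designed precisely so that the half-normal (more precisely, the $|L|$-marginal) is reproduced by one reflection, after the appropriate velocity-to-boundary time change. I would first analyze the velocity chain $L(t_0),L(t_1),\dots$ in isolation: show that if $L(t_i)$ has the stationary law $\nu$ with $\nu(d\ell)\propto |\ell|e^{-\ell^2/2}\,d\ell$ on $\R$ (the size-biased standard normal, which is what one gets from the spatial-boundary reweighting of $\calN(0,1)$ in the interval picture), then $L(t_{i+1})$ drawn from $\mc{V}(L(t_i),\cdot)$ again has law $\nu$. Definition \ref{def:vel} builds $\mc{V}$ out of a mixture over a level $Z(\ell)$ of sums of scaled exponentials $2\sum_{i=j}^N\lambda_iE_i$; the $p_k(\ell)$ are rigged so that integrating against $\nu(d\ell)$ collapses the mixture to the right gamma-type law. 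Concretely, $2\sum_{i\ge j}\lambda_i E_i$ is a generalized-Gamma-convolution type variable, and its square has an explicit density that is a linear combination of exponentials in $\ell^2/(2\mu_m)$ with coefficients exactly the partial-fraction weights appearing in \eqref{s16.2}; the claim is that $\int p_k(\ell)\,(\text{density of }(2\sum_{i\ge k}\lambda_iE_i)^{1/2})\;\nu(d\ell)$ sums over $k$ to give back $\nu$. I expect this to be the paper's core computation, and I would either reproduce it from Proposition \ref{LevelDistrib} or set it up as a generating-function/Laplace-transform identity in the variable conjugate to $\ell^2$.

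Once the velocity chain is shown to have stationary law $\nu$, I would lift this to the full process. Standard billiard/renewal reasoning gives that the stationary law of $(X(t),L(t))$ in continuous time is obtained by length-biasing the discrete-time stationary measure of $(X(t_i),L(t_i))$ by the inter-reflection time $1/|L(t_i)|$ and then spreading $X$ uniformly along the segment traversed. Length-biasing $\nu(d\ell)\propto|\ell|e^{-\ell^2/2}d\ell$ by $1/|\ell|$ exactly removes the size-biasing and returns $e^{-\ell^2/2}d\ell\propto\calN(0,1)$; and the spatial spreading gives $\calU(0,1)$, with independence of the two coordinates because the segment length is $1$ regardless of $|\ell|$. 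For uniqueness I would check irreducibility/Harris recurrence of the velocity chain (the reflection kernel $\mc{V}(\ell,\cdot)$ has a density that is positive on all of the appropriate half-line, so the chain is $\psi$-irreducible and aperiodic), conclude the stationary law of the chain is unique, and then transfer uniqueness to the continuous-time process via the one-to-one correspondence between the two stationary measures. The main obstacle, as noted, is verifying the algebraic identity underlying $\mc{V}$ — that the partial-fraction coefficients in \eqref{s16.2} are precisely the mixing weights that make $\nu$ invariant — and, secondarily, the non-explosion estimate needed to get a process on all of $[0,\infty)$.
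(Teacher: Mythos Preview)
Your approach is genuinely different from the paper's. The paper never verifies directly that the reflection kernel $\mc{V}(\ell,N,\vec\beta)$ preserves the size-biased normal law on the velocity chain; instead it obtains stationarity by discrete approximation. It constructs processes $(X_n,L_n)$ on $\{0,1/n,\dots,1\}\times\R$ whose transition rates are chosen to satisfy the Burdzy--White balance equation \eqref{eq1}, so that each $(X_n,L_n)$ has stationary law $\calU(\calD_n)\times\calN(0,1)$ by Theorem \ref{Theorem:BurdzyWhite}. The laws in Definition \ref{def:vel} are engineered to be the limits of the discrete boundary-layer dynamics (Theorem \ref{ThinBoundaries}), and Theorem \ref{convProc} gives $(X_n/n,L_n)\Rightarrow(X,L)$; stationarity of $\calU(0,1)\times\calN(0,1)$ then follows by passing to the limit in the stationary measures, with no algebraic identity checked by hand. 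For non-explosion the paper also argues more simply than you propose: from \eqref{s16.3} the outgoing speed is always bounded above by $(2\sum_{i=0}^N\lambda_iE_i)^{1/2}$ regardless of $\ell$ and $Z(\ell)$, so $|L(u_j)|$ is stochastically dominated by an i.i.d.\ sequence $A_j$, whence $\sum_j 1/|L(u_j)|\ge\sum_j 1/A_j=\infty$ a.s.\ by the strong law --- no appeal to the stationary distribution or to a coupling argument is needed.

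Your route --- prove that $\nu(d\ell)\propto|\ell|e^{-\ell^2/2}\,d\ell$ is invariant for the embedded velocity chain, then de-size-bias via the Palm/renewal inversion to recover $\calU(0,1)\times\calN(0,1)$ in continuous time --- is sound in principle and would yield a more self-contained proof. But the entire weight falls on the algebraic identity you yourself flag as the main obstacle, namely that $\int_{-\infty}^0\sum_k p_k(\ell)\,f_{k}(r)\,|\ell|e^{-\ell^2/2}\,d\ell\propto r e^{-r^2/2}$, and your proposal only sketches how one might attack it. The paper's approach trades that computation for the machinery of Sections \ref{Sect1}--\ref{Sect2}: what it buys is that the identity holds \emph{by construction} (as a limit of exact discrete identities) rather than by verification. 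Your uniqueness argument via $\psi$-irreducibility of the velocity chain is close in spirit to the paper's, which instead checks that every open set in $[0,1]\times\R$ is reachable for $(X,L)$ and invokes mutual singularity of distinct ergodic measures.
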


\begin{remark}
(i)  The reflection laws in Definition \ref{def:vel} correspond to the model considered in Theorem \ref{ThinBoundaries}. Two other results, Theorems \ref{Infinite_layers} and \ref{Cvge_Veloc}, implicitly contain two other families of reflection laws for which Theorem \ref{mainTheorem} holds. We do not give explicit formulas for these reflection laws  because they are more complicated than those in Definition \ref{def:vel}. The interested reader will have no problem with extracting definitions of those  reflection law families from the discussion  of the cases when $N_0(n)$ goes to infinity (in the ``noiseless case'') and the ``noisy case'' in Section \ref{Sect2}.

(ii)
In view of Example \ref{s17.1} (i) and Theorem \ref{mainTheorem} (ii), the stationary distribution of the process $(X, L)$ is $\calU(0, 1) \times \calN(0, 1)$ if  the speeds $|L(t_i)|$  are i.i.d. with the standard Rayleigh 
distribution. It is easy to see that this is the only example of a Markovian reflection law such that $L(t_{i+1})$ does not depend on $L(t_i)$ and the stationary distribution is $\calU(0, 1) \times \calN(0, 1)$. 
\end{remark}

\subsection{Proof strategy}
We will approximate a billiard with Markovian reflections by a sequence of processes $(X_n, L_n)$ with state spaces $\calD_n \times \R$, where $\calD_n = \{0, 1/n, 2/n, \dots, 1\}$. The processes $(X_n, L_n)$ will belong to a particular class introduced in \cite{BW1}, which we describe in Section \ref{Sect1}. These processes  have the stationary distribution  $\calU(\calD_n) \times \calN(0, 1)$, where $\calU(\calD_n)$ denotes the uniform distribution on $\calD_n$. Consequently, if $(X_n, L_n)$ converges to a process, classical limit results show that $\calU(0, 1) \times \calN(0, 1)$ is the stationary 
distribution for the limit. Constructing  a sequence of processes that converge to a billiard process with 
Markovian reflections relies, roughly speaking, on a finite system of equations involving  transition rates 
between states of $X_n$ together with a process $L_n$ which represents the  ``memory'' of $X_n$. Manipulation of these equations will give rise to the variety of reflection laws described above.

As we have already mentioned, we will approximate the unit interval with the discrete interval $\{0, 1/n, 2/n, \dots, 1\}$. We will reserve 
a tiny fraction of these points to serve the role of boundaries, namely the first $N_0(n)+1$ (resp. the last $N_1(n)+1$)
points will form the ``boundary'' at $0$ (resp.  at $1$). We think of these short discrete intervals as 
layers in which the random reflection  takes place. In the limit, the layers will collapse to the respective endpoints. Thus 
we take $N_j(n)/n\rightarrow 0$ as $n$ converges to infinity, for $j=0,1$. For fixed $n$, one can think of the points in $[0, N_0/n] \cup [(n - N_1)/n, 1]$ as holding a potential that  reverses the direction of the motion of the particle $X_n$ as it 
approaches either boundary. After this reversal it will leave the potential layer with a random ``velocity.'' These potential layers will disappear as $n$ 
approaches infinity. Because of this, the limiting process will have ballistic trajectories, but randomness for 
the reflecting velocity will be retained. The 
``velocity'' $L_n$ will not change outside of the potential layers in our model. 

To make the model tractable, we will consider only two dynamics
inside the boundary layers. In the first case, the particle $X_n$ will be able to jump  in only one direction, depending on the sign of $L_n$. In the second case, $X_n$ will be able to jump to both neighbors  but the boundary layers will be very thin, i.e., $N_0(n)= N_1(n)=1$.  

\begin{remark}
The formula for the reflected velocity \eqref{s16.3} is complicated and hard to comprehend intuitively. One may wonder whether a more accessible examples may arise by passing with $N$ to infinity and scaling $\vec{\beta}$  appropriately. This does not seem to be the case. The limit seems to be deterministic. In other words, the  limiting reflection would be totally elastic, resulting in the constant speed  for all times. The reason is that
 Theorem \ref{mainTheorem} is based on a ``noiseless'' approximation scheme where the particle can jump  in only one direction, depending on its current drift. For large $N$, the law of large numbers would generate deterministic reflections.

We expect that in the ``noisy'' case, when the particle can jump in both directions, there may exist interesting limiting distributions. However, we can effectively  analyze  the ``noisy'' case only for $N=1$.
\end{remark}

\subsection{Related results}
We have already indicated, at the beginning of the introduction, that our research is inspired by certain ideas from physics. On the mathematical side, this paper is related to models of Markov processes with ``memory''
presented in \cite{BBCH,BCG15,CEGMB,BKS12,BKS13}.
We will not review these models in detail because they are quite diverse. What they have in common is that, in every case, the stationary distribution has the product form---it is uniform (on an appropriate space) for the ``position'' component  of the process and it is Gaussian for the ``memory.'' The product form of the stationary distribution is far from obvious because the components, position and memory, are not independent; they are not even Markov on their own.
In view of the history of the model, we will interchangeably refer to the second component of $(X_n, L_n)$ as ``velocity'' or ``memory.'' 

The perspective of this paper is 
the reversal of the classical problem of finding the stationary distribution. We are looking for models that have the prescribed product-form stationary distribution.

Our specific model has the following roots.
In \cite{BBCH}, a reflected Brownian motion with drift  was analyzed.
The drift had memory---it accumulated proportionally to the vector-valued local time on the boundary. As a part of the analysis, the authors of \cite{BBCH} considered a sequence of Brownian motions not reflected on the boundary but repulsed by a sequence of smooth potentials converging to 0 inside the domain and to infinity on the boundary. The diffusion coefficient remained constant.
One may wonder what limiting processes could arise if we let the potentials converge in the manner described above and at the same time we let diffusivity go to 0 at an appropriate rate. It is clear that the limiting process must have ballistic trajectories inside the domain but its reflection law might be random. Our present article can be viewed as a simplified version of the problem, but one that tries to go into the heart of the matter.

At the technical level, we will use a discrete approximation, originally introduced in \cite{BW1}. So far, this type of approximation was used only for generating conjectures which were subsequently proved using other methods, as in \cite{BKS12} and \cite{BKS13}. Convergence of a discrete approximation of this type to a Markov process with memory was proved for the first time in \cite{B}.

Finally, we would like to 
point out that \cite{BW2} presented a process with sawtooth paths, just like our process $X$. In that case, the sawtooth process had a Gaussian stationary distribution. The speed was constant and the locations of direction reversals were random, whereas in our case, the locations of direction reversals are fixed but the speed is random. 

\subsection{Organization of the paper}

In Section \ref{Sect1} we introduce  approximating processes and state our assumptions. In Section \ref{Sect2} we state, without proof, all intermediate results needed to prove that
the approximating processes  converge in distribution to a billiard process with Markovian 
reflections. All these results and Theorem \ref{mainTheorem}, our main result, are proved  in Section \ref{Sect4}.

\section{Discrete approximations}
\label{Sect1}
\subsection{Discrete-space Markov processes with memory}\label{ch3:section:MarkovProcessesMemory}

We will review the context as well as the main result from  \cite{BW1} in this subsection. Let $(X, L)$ be 
a continuous time Markov processes with state space $\calD_n\times\R^d$,
where  $d\geq 1$ and $\calD_n=\{0,1,\dots,n\}$.
We associate a vector $v_j \in \R^d$ to each $j \in \calD_n$, and define 
\[
L_j(t) = \Leb(s\in[0, t] : X(s) = j)
\] as the
time $X$ has spent at location $j$ until time $t.$ The ``memory'' process is defined as
\[
L(t) = \sum_{j \in \calD_n}v_jL_j(t).
\] 
Functions
\[
a_{ij}(\ell) : \R^d \to \R
\] 
 govern the intensity of  transitions of $X$ from $i$ to $j$. In other words, conditional on $X(t_0) = i$ and $L(t_0) = \ell$, the intensity of  jumps of $X$  from $i$ to $j$ is $a_{ij}(\ell + [t-t_0]v_i)$ for $t \geq t_0$, until $X$ jumps away from $i$. More precisely, the evolution of the process can be
described as follows. Let $(E_k^j)_{j\in \calD_n ,\;k\geq 0}$ be a family of i.i.d.{} exponential random variables with 
parameter one and let $(T_i)_{i\geq 0}$ be the sequence of times when $X$
changes its position, with $T_0=0$. Assuming that the process is defined up to time $T_i$, we recursively define
\begin{align}\notag
T_{i+1}^j&= \inf\left(t>T_i: \int_{T_i}^t a_{X(T_i)j}\big(L(T_i)+ v_{X(T_i)}(s-T_i)\big)ds\geq E_i^j\right),\\
T_{i+1}&=\min_{j\in \calD_n} T_{i+1}^j,\label{o11.1}
\end{align}  
with the convention that $\inf\emptyset =\infty$. Then, set
\begin{align}\label{Evol_Interjumps}
L(s)&=L(T_i)+ v_{X(T_i)}(s-T_i),& \text{ for } s\in [T_i, T_{i+1}],\\
X(s)&= X(T_i),& \text{ for } s\in [T_i, T_{i+1}),\notag\\
X(T_{i+1})&= \argmin(T_{i+1}^j: j\in \calD_n).&\notag
\end{align}
Note that
\[
\prob(T_{ i+1}^j > t + T_i \mid X(T_i) = k, L(T_i) = \ell) = \exp\left(-\int_0^ta_{kj}(\ell + sv_k)ds\right),
\]
for all $t > 0$.  The pair $(X, L)$ is a strong Markov process with 
infinitesimal generator
\begin{align*}
\mc{A}f(j,\ell)= \langle v_j,\nabla_\ell f(j,\ell)\rangle +\sum_{i\neq j}a_{ji}(\ell)\big(f(i,\ell)-f(j,\ell)\big)
\end{align*}
for $f:\calD_n\times \R^d \rightarrow \R$ of sufficient smoothness. 
It is assumed in \cite{BW1} that $(X, L)$ is irreducible in the sense that there are $j_0 \in \calD_n$ and a non-empty open set $U\subset \R^d$  such that 
\[
\prob((X(t), L(t)) \in \{j_0\} \times U \mid X(0) = i, L(0) = \ell) > 0,
\]
for every $(i, \ell) \in \calD_n\times \R^d$ and some $t>0$ (depending on $(i, \ell)$).

\begin{remark}
See \cite[Chap. 2]{Bremaud} for a formal definition and characterization of doubly-stochastic jump processes such as $X$. Note that 
the stochastic jump intensity of $X$ is adapted to the right continuous filtration generated by $X$.
\end{remark}

 Let $\mc{U}(\calD_n)$ denote  the uniform distribution on $\calD_n$ and let $\calN_d$ be the $d$-dimensional standard normal distribution.
Our model and arguments will be based on the following result.

\begin{theorem}\label{Theorem:BurdzyWhite}\cite[Cor. 2.3]{BW1}
The stationary distribution for $(X, L)$ is $\calU(\calD_n)\times \calN_d$ if and only if
\begin{align}\label{eq1}
v_j\cdot \ell + \sum_{i \in \calD_n}a_{ij}(\ell) - \sum_{i\in \calD_n} a_{ji}(\ell) = 0,
\end{align}
for all $j\in \calD_n$ and $\ell \in \R^d$.
\end{theorem}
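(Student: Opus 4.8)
The plan is to characterize stationarity directly through the generator $\mc{A}$: the distribution $\pi = \calU(\calD_n)\times\calN_d$ is stationary if and only if $\int \mc{A}f\,d\pi = 0$ for all sufficiently smooth bounded $f:\calD_n\times\R^d\to\R$ (together with irreducibility, which is assumed and guarantees uniqueness, so that a solution of the stationary equation must be \emph{the} stationary distribution). Writing $\gamma_d(\ell)$ for the standard Gaussian density on $\R^d$ and using $\pi(\{j\}\times d\ell) = \frac{1}{n+1}\gamma_d(\ell)\,d\ell$, the condition becomes
\begin{align*}
\sum_{j\in\calD_n}\int_{\R^d}\Big(\langle v_j,\nabla_\ell f(j,\ell)\rangle + \sum_{i\neq j}a_{ji}(\ell)\big(f(i,\ell)-f(j,\ell)\big)\Big)\gamma_d(\ell)\,d\ell = 0.
\end{align*}
First I would handle the drift term by integration by parts in $\ell$: since $\nabla_\ell\gamma_d(\ell) = -\ell\,\gamma_d(\ell)$, we get $\int \langle v_j,\nabla_\ell f(j,\ell)\rangle\gamma_d(\ell)\,d\ell = \int f(j,\ell)\,\langle v_j,\ell\rangle\,\gamma_d(\ell)\,d\ell$, with no boundary contribution because $f$ is bounded and $\gamma_d$ decays rapidly (a density argument in a suitable function space lets us restrict to such $f$). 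For the jump term I would relabel the double sum, writing $\sum_j\sum_{i\neq j} a_{ji}(\ell) f(i,\ell) = \sum_i f(i,\ell)\sum_{j\neq i} a_{ji}(\ell)$ and $\sum_j\sum_{i\neq j}a_{ji}(\ell)f(j,\ell) = \sum_j f(j,\ell)\sum_{i\neq j}a_{ji}(\ell)$. Collecting the coefficient of $f(j,\ell)$ across all three pieces gives
\begin{align*}
\sum_{j\in\calD_n}\int_{\R^d} f(j,\ell)\Big(\langle v_j,\ell\rangle + \sum_{i\neq j}a_{ij}(\ell) - \sum_{i\neq j}a_{ji}(\ell)\Big)\gamma_d(\ell)\,d\ell = 0.
\end{align*}
Since the diagonal terms $a_{jj}$ cancel between the two sums, the inner sums may equivalently be taken over all $i\in\calD_n$, matching \eqref{eq1}.

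Next I would argue both directions. If \eqref{eq1} holds, the bracket vanishes identically, so the integral is zero for every $f$, hence $\pi$ is stationary. Conversely, if the displayed integral vanishes for all admissible $f$, then for each fixed $j$ choosing $f$ supported on $\{j\}\times\R^d$ shows that $\int_{\R^d} f(j,\ell)\,g_j(\ell)\,\gamma_d(\ell)\,d\ell = 0$ for all such test functions, where $g_j(\ell) := \langle v_j,\ell\rangle + \sum_i a_{ij}(\ell) - \sum_i a_{ji}(\ell)$; since $\gamma_d > 0$ and the test functions are dense, $g_j \equiv 0$, which is \eqref{eq1}. Uniqueness of the stationary distribution follows from the assumed irreducibility of $(X,L)$ via standard Markov process theory.

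The main technical obstacle is justifying the integration by parts and the exchange of sum and integral rigorously, i.e., pinning down the class of test functions $f$ for which $\int\mc{A}f\,d\pi = 0$ characterizes stationarity and which is rich enough to separate points after integration against $\gamma_d\,d\ell$ on each fiber. One must verify that $\mc{A}f$ is integrable against $\pi$ (the $a_{ij}$ may be unbounded, e.g., growing like $\ell^2$ as in Definition \ref{def:vel}), that the no-boundary-term claim in the integration by parts is legitimate, and that the resulting identity indeed implies invariance rather than merely being a necessary condition — this is precisely the content of \cite[Cor.~2.3]{BW1}, and in the write-up I would either invoke that corollary directly or reproduce its short argument, taking care that the finiteness of $\calD_n$ makes the jump part a finite sum and hence harmless once each $a_{ij}$ is locally integrable against the Gaussian.
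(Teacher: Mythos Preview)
The paper does not prove this statement at all: it is imported verbatim as \cite[Cor.~2.3]{BW1} and used as a black box (see the proof of Theorem~\ref{mainTheorem}, where it is simply invoked). So there is no ``paper's own proof'' to compare against.

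Your argument is the standard adjoint-generator computation and is correct in outline: integrate $\mc{A}f$ against $\pi$, use $\nabla_\ell\gamma_d=-\ell\gamma_d$ to convert the drift term, relabel the jump sums, and read off the coefficient of $f(j,\ell)$. This is exactly the route taken in \cite{BW1}, and your last paragraph already identifies the only nontrivial points (justifying the integration by parts and the sufficiency direction, i.e., that $\int\mc{A}f\,d\pi=0$ on a suitable core actually implies invariance). One small correction: in the models of this paper the rates $a_{ij}(\ell)$ grow linearly in $|\ell|$ (see \eqref{s21.7}--\eqref{s21.8} and \eqref{o3.1}--\eqref{o3.2}), not like $\ell^2$, and Definition~\ref{def:vel} describes reflection laws rather than transition rates; this only makes the integrability checks easier.
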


\begin{remark}
Heuristically,
condition \eqref{eq1} can be represented  as
\begin{align}\label{s21.1}
 v_j\cdot \ell + (\text{flow into $j$}) - (\text{flow out of $j$}) = 0,
\end{align}
for all $j\in \calD_n$ and $\ell \in \R^d$.
\end{remark}

In the next two sections we will specify $v_j$ and $a_{ij}$ that will give rise to a billiard process with Markovian reflections.

\subsection{Approximating processes}

We will consider a sequence of processes $(X_n, L_n)$, $n\geq 2$, defined
as in Section \ref{ch3:section:MarkovProcessesMemory}, with the state space $\calD_n \times \R$, where
$\calD_n = \{ 0, 1,\dots, n\}$.
We will always assume that $a_{ij}(\ell) =0 $ whenever $|i-j| \ne 1$ (we will suppress $n$ in the notation $a_{ij}(\ell) $). Hence $X_n$ will be a nearest neighbor random walk with random transition probabilities.

Heuristically, $\calD_n$ should be thought of as a discretization of $[0,1]$. We chose to label the elements of $\calD_n$ as $\{ 0, 1,\dots, n\}$ rather than $\{ 0, 1/n,2/n,\dots, (n-1)/n,1\}$ for typographical reasons.
The state space $\calD_n$ will have two ``boundary regions'' 
$\prt\calD_n^-:= \{0,1,\dots,N_0(n)\}$ and $\prt\calD_n^+:=\{ n - N_1(n) , \dots, n-1, n\}$.

\begin{assumption}\label{o8.1}
The following are (some of) our standing assumptions.
\begin{enumerate}[label=(\roman*)]
\item $0 \leq N_0(n), N_1(n)< n/2$, for
$ n\geq2$,
\item
$\lim_{n\rightarrow \infty} N_k(n)/n=0$, for $ k=0,1$,

\item $v_j(n) = 0$ if and only if $j\in \{N_0(n)+1, n - N_1(n) -1\}$,

\item $v_j(n) > 0$ if $j \in \prt\calD_n^-$,

\item $v_j(n) < 0$ if $j \in \prt\calD_n^+$.

\end{enumerate}
\end{assumption}

Assumption \ref{o8.1} (iii) means that the memory process $L_n$ is not affected when $X_n$ is outside the boundary regions
$\prt\calD_n^-$ and $\prt\calD_n^+$. We will choose $a_{ij}(\ell)$ so that, as a consequence of Assumption \ref{o8.1} (iii), the ``drift'' of $X_n$ will not be affected outside the boundary regions.

\begin{definition} The boundary $0$ (resp. $n$) is said to be \textit{hard} if and only if $N_0(n)=0$ (resp. $N_1(n)=0$); otherwise it is said to be \textit{soft}. The boundaries are said to be \textit{noiseless} if  $a_{i,j}(\ell)>0$ if and only if $(j-i)\ell >0$ for all $i\in\prt\calD_n^-\cup\prt\calD_n^+$; otherwise they are said to be \textit{noisy}. 
\end{definition}
Our motivation for this terminology is the following. Since the process $(X_n,L_n)$ is supposed to approximate a billiard process, its ``velocity component'' $L_n$ should change
only if $X_n$ is in one of the boundary regions $\prt\calD_n^-$ and $\prt\calD_n^+$.  The term  \textit{``soft''} refers to the idea that the repulsive effect is felt away from the boundaries 0 and $n$, while \textit{``hard''} designates the opposite case. 

The term \textit{``noisy''} refers to the idea that the ``drift'' of the particle does not determine the direction of the motion in a deterministic way---the particle can go in both directions with 
positive probabilities.

\subsubsection{Noiseless case}
We will discuss only the lower boundary region $\prt\calD_n^-$. Implicitly, we make analogous assumptions  for the  region $\prt\calD_n^+$; therefore, analogous results hold for the upper boundary region.

Recall that we have assumed $a_{ij}(\ell) =0 $ whenever $|i-j| \ne 1$.
In the noiseless case we will assume that for all $n, \ell $ and $i\in\calD_n\setminus\{n\}$, and some $c_i(n) >0$, the transition rates have the form
\begin{align}\label{s21.7}
a_{i, i + 1}(\ell) &= 
\begin{cases}
c_i(n)\ell,&\text{if  }\ell\geq 0,\\
0 & \text{otherwise},
\end{cases}\\
a_{i + 1, i}(\ell) &=
\begin{cases}
c_i(n)(- \ell ), & \text{if  }\ell\leq 0,\\
0 & \text{otherwise}.
\end{cases}\label{s21.8}
\end{align}
Let
\begin{align}\label{o8.2}
c_i(n) = n, \qquad \text{  for  } N_0(n)\leq i \leq n - N_1(n).
\end{align}
When $\ell < 0$ we have the following
schematic representation of the  probability mass flow into and out of  $i\in\prt\calD_n^-\setminus\{0\}$,  (see \eqref{s21.1}), 
\begin{center}
\begin{equation}\label{schema1}
\begin{tikzpicture}
\node[draw,circle,fill=gray!0] (A)at(3,3){$i-1$};
\node[draw,circle,fill=gray!0] (B)at(7,3){$i$};
\node[draw,circle,fill=gray!0] (C)at(11,3){$i+1$};
\node(draw) (D)at(7,5){};
\draw[<-, >= latex] (A)--(B);
\draw (4.5,3) node [above right] {$c_{i-1}(n)|\ell|$};
\draw[<-, >= latex] (B)--(C);
\draw (8.5,3) node [above right] {$c_{i}(n)|\ell|$};
\draw[->, >= latex] (B)--(D);
\draw (7,4) node [above right] {$v_i(n)|\ell|$};
\end{tikzpicture}
\end{equation}
\end{center}
 Assumption \ref{o8.1} (iv) implies that $v_i(n)\ell < 0$. Therefore, the corresponding arrow  shows the  ``outflow'' from $i$.

Following \eqref{eq1} we equate the sum of signed flows to zero, to obtain for $i\in\prt\calD_n^-\setminus\{0\}$,
\begin{align}\label{s21.2}
0 = c_i(n)|\ell| - c_{i - 1}(n)|\ell | - v_i(n)|\ell| \iff v_i(n) + c_{i - 1}(n) = c_i(n).
\end{align}
When $i = 0$, the schematic is the following,
\begin{center}
\begin{equation}\label{s23.3}
\begin{tikzpicture}
\node[draw,circle,fill=gray!0] (A)at(3,3){$0$};
\node[draw,circle,fill=gray!0] (B)at(7,3){$1$};
\node(draw) (D)at(3,5){};
\draw[<-, >= latex] (A)--(B);
\draw (4.5,3) node [above right] {$c_{0}(n)|\ell|$};
\draw[->, >= latex] (A)--(D);
\draw (3,4) node [above right] {$v_0(n)|\ell|$};
\end{tikzpicture}
\end{equation}
\end{center}
which yields the formula 
\begin{align}\label{s21.3}
v_0(n) = c_0(n).
\end{align}
We combine \eqref{s21.2}-\eqref{s21.3} to obtain the following system of equations for $c_i(n)$'s and $ v_i(n)$'s,
\begin{align}
\begin{split}
v_{N_0(n)}(n) + c_{N_0(n) - 1}(n) &= c_{N_0(n)}(n) = n,\\
v_{N_0(n) - 1}(n) + c_{N_0(n) - 2}(n) &= c_{N_0(n) - 1},\\
\vdots\\
v_1(n) + c_0(n) &= c_1(n),\\
v_0(n) &= c_0(n).
\label{eq:system}
\end{split}
\end{align}

It follows from \eqref{eq1} and \eqref{s21.7}-\eqref{s21.8} that we obtain the same system of equations \eqref{eq:system} in the case when $\ell > 0$.
It follows easily from \eqref{eq:system} that 
\begin{align}\label{s22.1}
c_k(n) = \sum_{i = 0}^kv_i(n),\quad 0\leq k \leq N_0(n).
\end{align}
In particular,
\begin{align}\label{s22.2}
\sum_{i = 0}^{N_0(n)}v_i(n) = c_{N_0(n)}(n) = n.
\end{align}
In order to analyze the evolution of $L_n$ inside the soft boundaries, we will need the following quantities:   
\begin{align}\label{s28.5}
\lambda_{i}(n) &:= \frac{v_i(n)}{c_i(n)}, \quad i= 0, \dots, N_0(n),\\
\mu_i(n) &:= \frac{v_{i+1}(n)}{c_{i}(n)}, \quad i= 0, \dots, N_0(n) - 1.\label{s28.6}
\end{align}
These are the ratios of the ``memory accumulation rates'' at  sites $i$ and $i+1$  and the jump rate between these two sites (per unit of memory $L_n$); see Fig. \eqref{schema1}.

In view of \eqref{s21.3}, we have $\lambda_0(n)=1$ for all $n$. 

We will use  the following assumptions in some of our arguments.
\begin{enumerate}
\item[$\bold{F1}$:] $\lambda_{i}(n)\neq \lambda_{j}(n)$ for all  $i,j\in \prt \calD_n^-$ such that $j\neq i$.
\item[$\bold{F2}$:] $\mu_{i}(n)\neq \mu_{j}(n)$ for all $i,j= 0, \dots, N_0(n) - 1$ such that $j\neq i$.
\item[$\bold{F'}$: ] $v_j(n) \geq v_{j+1}(n) >0$ for all $j\in \{0,\dots, N_0(n) -1\}$.
\end{enumerate}

We will argue that $\bold{F'}$ implies $\textbf{F1}$-$\textbf{F2}$. We will use \eqref{s22.1}. We have $\lambda_i(n) > \lambda_{i+1}(n)$ if and only if the following equivalent conditions hold,
\begin{align}\label{s30.10}
\frac{v_i(n)}{c_i(n)} > \frac{v_{i+1}(n)}{c_{i+1}(n)}
&\LRa
v_i(n)c_{i+1}(n) > v_{i+1}(n)c_{i}(n) 
\LRa v_i(n)\sum_{j = 0}^{i+1}v_j(n) > v_{i+1}(n)\sum_{j = 0}^{i}v_j(n)\\
&\LRa
v_i(n)v_{i+1}(n) + (v_i(n)-v_{i+1}(n))\sum_{j = 0}^{i}v_j(n) >0. \label{s30.13} 
\end{align}
If $\bold{F'}$ holds then the last inequality is true and, therefore, $\lambda_i(n) > \lambda_{i+1}(n)$. This shows that $\bold{F'}$ implies  $\textbf{F1}$. The calculations showing that $\bold{F'}$ implies  $\textbf{F2}$ are similar:
\begin{align}\label{s30.11}
\frac{v_{i+1}(n)}{c_i(n)} > \frac{v_{i+2}(n)}{c_{i+1}(n)}
&\LRa
v_{i+1}(n)c_{i+1}(n) > v_{i+2}(n)c_{i}(n) \\
&\LRa v_{i+1}(n)\sum_{j = 0}^{i+1}v_j(n) > v_{i+2}(n)\sum_{j = 0}^{i}v_j(n)\\
&\LRa
v_{i+1}(n)v_{i+1}(n) + (v_{i+1}(n)-v_{i+2}(n))\sum_{j = 0}^{i}v_j(n) >0. \label{s30.12} 
\end{align}

In the case when $N_0(n)=N$ for all $n$, we will make the following assumption.
\begin{enumerate}
\item[$\bold{F3}$:] $\lim_{n\rightarrow \infty} v_j(n)/n=\beta_j >0$ for all $j=0,\cdots, N_0(n)=N$. 
\end{enumerate}

\begin{remark}\label{s30.20}
(i) If $\bold{F3}$ holds then $\sum_{j=0}^{N}\beta_j = 1$ because of \eqref{s22.2}. 

(ii) It is easy to check that if \textbf{F3} is true then the limits 
\begin{align}\label{o2.1}
\lambda_i :=\lim_{n\rightarrow\infty}\lambda_i(n) &=\frac{\beta_i}{\sum_{j=0}^i \beta_j},\quad i = 0, \dots, N_0(n),\\
\mu_i :=\lim_{n\rightarrow\infty}\mu_i(n) &=\frac{\beta_{i+1}}{\sum_{j=0}^i \beta_j},\quad i = 0, \dots, N_0(n)-1,\label{o2.2}
\end{align}
exist. 

(iii)
Assumption $\bold{F3}$ and \eqref{s22.1} imply that the limits  $\lim_{n\rightarrow \infty} c_j(n)/n=c_j >0$ exist for all $j=0,\cdots, N_0(n)=N$.
By assumptions $\bold{F'}$ and $\bold{F3}$, we have $0<\beta_{j+1}\leq \beta_j$ for all $j\in \{0,\cdots, N_0(n)-1\}$. The calculations \eqref{s30.11}-\eqref{s30.12} can be repeated with $v_j(n)$ replaced with $\beta_j$ for $j=i+1, i+2$, and $c_j(n)$ replaced with $c_j$ for $j=i, i+1$. With this substitution, the conclusion of that calculation is that $\mu_i > \mu_{i+1} >0$. 
\end{remark}

If $N_0(n)$ grows to infinity with $n$,  instead of \textbf{F3}, we will adopt the following assumptions. First, let
\begin{align}\label{o2.3}
\lambda'_j(n) &= 
\begin{cases}
		\lambda_{j}(n) & \text{if } j\leq N_0(n),\\ 
		0 & \text{otherwise, }
	\end{cases}\\
\mu'_j(n) &= 
\begin{cases}
		\mu_{j}(n) & \text{if } j\leq N_0(n),\\ 
		0 & \text{otherwise. }
	\end{cases}
\label{o2.4}
\end{align}
The new assumptions are
\begin{enumerate}
\item[$\bold{G1}$:] $(\mu'_j(n))_{j\geq 0}$ converges in $\ell^1$ to $(\mu_j)_{j\geq 0}$ as $n\to\infty$.
\item[$\bold{G2}$:] $(\lambda'_j(n))_{j\geq 0}$ converges in $\ell^1$ to $(\lambda_j)_{j\geq 0}$ as $n\to\infty$.
\end{enumerate}

\subsubsection{Noisy case}

In this case, we will give explicit formulas only in the case  $N_0(n)=N_1(n)=1$.
In the general case the formulas are too complicated to be useful or informative.

Recall that we have assumed that $a_{ij}(\ell) =0 $ whenever $|i-j| \ne 1$.
In the noisy case we will assume that for all $n, \ell $ and $i\in\calD_n\setminus\{n\}$, and some $b_i(n),c_i(n) >0$, the transition rates have the form,
\begin{align}\label{o3.1}
a_{i, i + 1}(\ell) &= 
\begin{cases}
c_i(n)\ell,&\text{if  }\ell\geq 0,\\
b_{i+1}(n)(-\ell) & \text{if  }\ell< 0,
\end{cases}\\
a_{i + 1, i}(\ell) &=
\begin{cases}
c_i(n)(- \ell ), & \text{if  }\ell\leq 0,\\
b_{i+1}(n)\ell & \text{if  }\ell> 0.
\end{cases}\label{o3.2}
\end{align}
For $i \in \{1,\dots,  n - 2\}$ we set $c_i(n) = n$ and $b_{i+1}(n)=0$. 

By symmetry of our model, we can focus on the lower boundary $\prt \calD_n^-$. Updating the noiseless schematics in \eqref{schema1} and \eqref{s23.3}, we obtain in the noisy case, when $\ell < 0$,
\begin{center}
\begin{tikzpicture}
\node[draw,circle,fill=gray!0] (A)at(3,3){$0$};
\node[draw,circle,fill=gray!0] (B)at(7,3){$1$};
\node[draw,circle,fill=gray!0] (C)at(11,3){$2$};
\node(draw) (D)at(7,5){};
\node(draw) (E)at(3,5){};
\draw[<-, >= latex] (A)to[bend left](B);
\draw (4.5,3) node [above right] {$c_{0}(n)|\ell|$};
\draw[->, >= latex] (A)to[bend right](B);
\draw (4.5,3) node [below right] {$b_{1}(n)|\ell|$};
\draw[<-, >= latex] (B)--(C);
\draw (8.5,3) node [above right] {$c_{1}(n)|\ell|$};
\draw[->, >= latex] (B)--(D);
\draw (7,4) node [above right] {$v_1(n)|\ell|$};
\draw[->, >= latex] (A)--(E);
\draw (3,4) node [above right] {$v_0(n)|\ell|$};
\end{tikzpicture}
\end{center}
In this schematics $v_i \geq 0$,  the values adjacent to the arrows indicate the magnitude of the incoming or outgoing flow,
and the direction designates the sign. When $\ell >0$, the schematics remain valid except that the direction of the arrows should be reversed.
Following \eqref{eq1}-\eqref{s21.1} we equate the sum of signed flows to zero. With the convention $c_{-1}(n)=c_{n}(n)=b_0(n)= b_{n+1}(n)=0$, we get for all $i\in\calD_n$,
\begin{align*}
&0 = c_i(n)|\ell| + b_i(n)|\ell| - c_{i - 1}(n)|\ell| - v_i(n)|\ell|- b_{i+1}(n)|\ell| \\
\iff &b_{i+1}(n)+ v_i(n) + c_{i - 1}(n) = b_i(n) + c_i(n).
\end{align*}
Hence, 
\begin{align*}
c_0(n) &= v_0(n) + b_1(n),\\
c_1(n) &= c_2(n)= v_0(n)+ v_1(n) =n.
\end{align*}  
In the noisy case, we will use the following assumption.  
\begin{enumerate}
\item[$\bold{K}$: ] $\lim_{n\rightarrow\infty}
\displaystyle \frac{b_1(n)}{n}= \vartheta_1 \geq 0$.
\end{enumerate}

\section{Convergence of approximations}\label{Sect2}

This section contains  intermediate results needed to prove Theorem \ref{mainTheorem}. Some of them may have independent interest. All proofs will be postponed to Section \ref{Sect4}.

\subsection{Noiseless case}

The discussion of the noiseless case will be further subdivided into two cases, those of the hard boundary and soft boundary.

\subsubsection{Hard boundary}
Recall that ``hard boundary'' refers to the case $N_0(n) = 0$. Hence, $s\mapsto L_n(s)$ changes only if $X_n(s)\in \{0, n\}$.
This implies that if $X_n$ jumps to 0 at some time $t>0$, we must have $L_n(t) < 0$. Our transition rates are chosen so that $X_n$ cannot leave 0 until $L_n$ changes sign to positive. Thus, let us suppose that $(X_n(0),L_n(0))=(0,0)$. Recall our notation from \eqref{o11.1} and the assumption that $c_0(n)=v_{0}(n)=n$. Let $E_1$ be an exponential random variable with mean 1.
We have
\begin{align*}
T_1&=\inf\left(t\geq 0 : \int_0^t a_{01}(sv_{0})ds\geq E_1\right)
= \inf\left(t\geq 0 : \int_0^t n^2 s ds\geq E_1\right)
=\frac{\sqrt{2E_1}}{n}.
\end{align*} 
Hence,
if $\ell<0$ and $(X_n(0),L_n(0))=(0,\ell)$ then the distribution of $L_n(T_1)= v_0(n)T_1$ is the same as  the distribution of $\sqrt{2E_1}$.
Therefore,
\begin{align*}
\mathbb{P}(L_n(T_1)>r )&= \mathbb{P}(E_1>r^2/2)
=\exp(-r^2/2).
\end{align*}
Consequently, the density of $L_n(T_1)$ is
$r\exp(-r^2/2)$ for $r>0$. This is the density of what is called the Rayleigh distribution with parameter 1.

The unique feature of the hard boundary reflection is that the distribution of the ``velocity'' just after the reflection  depends  neither on the incoming velocity nor on  $n$.

\subsubsection{Soft boundary}\label{soft_boundary}

In the soft boundary case, the evolution is more interesting than in the hard boundary case. 
At the moment when the process $X_n$ enters the lower boundary layer $\prt \calD_n^-$, its ``velocity'' $L_n$ must be negative. The particle $X_n$ will continue to transition downward until $L_n$ changes sign or $X_n$ reaches 0.  Consequently, we must determine the distribution of the level at which the velocity $L_n$ changes sign. Once the velocity becomes positive, it increases
until $X_n$ exits $\prt \calD_n^-$.

 Let 
\begin{align}\label{s30.21}
\mc{T}_n &= \inf\{t\geq 0: L_n(t) \geq 0\}, \\
 G_n &= X_n(\mc{T}_n), \label{s30.22}\\
\mc{U}_n &= \inf\{t\geq \mc{T}_n: X_n(t) \notin \prt \calD_n^-\}, \label{s30.23}\\
\mc{V}_n(\ell) &= \mc{L}(L_n(\mc{U}_n)\mid X_n(0) =N_0(n),L_n(0) = \ell),
\qquad \ell<0,
\label{o5.3}\\
\mc{V}^+_n(\ell) &= \mc{L}(L_n(\mc{U}_n)\mid X_n(0) =N_1(n),L_n(0) = \ell),
\qquad \ell>0,
\label{o5.4}\\
p_j(n, \ell) &=
\mathbb{P}\left( G_n =j \mid X_n(0) =N_0(n), L_n(0) = \ell \right).\label{s24.1}
\end{align}

\begin{proposition}\label{LevelDistrib} Assume  either $\bold{F2}$ or $\bold{F'}$. For $\ell <0$,
\begin{align}\label{s30.1}
&p_{N_0(n)}(n, \ell)=\exp\left(-\frac{\ell^2}{2\mu_{N_0(n)-1}(n)}\right),\\
&p_k(n, \ell) 
=\left(\prod_{j=k}^{N_0(n)-1} \frac1{\mu_j(n)} \right)
\sum_{j=k-1}^{N_0(n)-1} \left(\exp\left(-\frac{\ell^2}{2 \mu_j(n)}\right) 
\prod_{\substack{i=k-1\\i \ne j}}^{N_0(n)-1}
\frac 1{1/\mu_i(n) - 1/\mu_j(n)}
\right),\label{o1.2}\\
& \qquad\qquad \text{  for  } 0 < k <  N_0(n),\notag\\
&p_0(n, \ell)  = 1-\sum_{k=1}^{N_0(n)}p_k(n, \ell) .\label{s30.3}
\end{align}
\end{proposition}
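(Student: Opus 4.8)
The plan is to run the process forward from the state $(N_0(n),\ell)$ with $\ell<0$ and to change the time variable so that the position component becomes a \emph{time-homogeneous pure-death chain}. While $L_n<0$ the rates \eqref{s21.7}--\eqref{s21.8} forbid upward jumps, so $X_n$ can only decrease, and it is absorbed once it reaches $0$ (because $a_{0,1}(\ell)=0$ for $\ell<0$). By Assumption \ref{o8.1} (iv), $v_j(n)>0$ for every $j\in\prt\calD_n^-$, so between jumps $L_n$ increases at a positive rate; as $L_n=\sum_j v_j(n)L_j(\cdot)$ is continuous, $t\mapsto L_n(t)$ is a strictly increasing continuous bijection of $[0,\mc{T}_n]$ onto $[\ell,0]$. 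I would therefore reparametrize by $\tau=\tfrac12\big(\ell^2-L_n(t)^2\big)$, a strictly increasing continuous bijection of $[0,\mc{T}_n]$ onto $[0,\ell^2/2]$, and set $Y(\tau):=X_n(t(\tau))$. When $X_n=i$ the physical time increment is $dL_n/v_i(n)$ and $d\tau=-L_n\,dL_n$, so by \eqref{s21.8} and \eqref{s28.6} the $X_n$-jump intensity from $i$ to $i-1$ accumulated against the $\tau$-clock is $\frac{c_{i-1}(n)}{v_i(n)}\int(-L_n)\,dL_n=\frac1{\mu_{i-1}(n)}\,d\tau$. Hence, by the construction \eqref{o11.1}--\eqref{Evol_Interjumps}, $Y$ is precisely the pure-death chain on $\{0,1,\dots,N_0(n)\}$ started at $N_0(n)$ that jumps from state $i$ to $i-1$ at constant rate $1/\mu_{i-1}(n)$ for $1\le i\le N_0(n)$ and is absorbed at $0$, and $G_n=Y(\ell^2/2)$.

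Next I would read off the law of $G_n$. Let $H_1,\dots,H_{N_0(n)}$ be independent, $H_i$ exponential with mean $\mu_{i-1}(n)$; these are the successive holding ``times'' of $Y$ at states $N_0(n),N_0(n)-1,\dots,1$. Under $\bold{F2}$ (hence also under $\bold{F'}$, by the computation \eqref{s30.11}--\eqref{s30.12}) the rates $1/\mu_0(n),\dots,1/\mu_{N_0(n)-1}(n)$ are pairwise distinct. Since $Y$ first reaches state $k$ at $\tau$-time $\sum_{i=k+1}^{N_0(n)}H_i$ and state $k-1$ at $\sum_{i=k}^{N_0(n)}H_i$, for $1\le k\le N_0(n)$ we get, with $t=\ell^2/2$,
\[
p_k(n,\ell)=\P\Big(\sum_{i=k}^{N_0(n)}H_i>t\Big)-\P\Big(\sum_{i=k+1}^{N_0(n)}H_i>t\Big),
\]
the second sum being empty (hence the second probability equal to $1$) when $k=N_0(n)$; this last case immediately yields \eqref{s30.1}.

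For $0<k<N_0(n)$ I would insert the standard partial-fraction expression for the tail of a sum of independent exponentials with distinct rates,
\[
\P\Big(\sum_{i\in I}H_i>t\Big)=\sum_{j\in J}e^{-t/\mu_j(n)}\prod_{\substack{i\in J\\ i\ne j}}\frac{1/\mu_i(n)}{1/\mu_i(n)-1/\mu_j(n)},\qquad J:=\{\,i-1:i\in I\,\},
\]
once with $I=\{k,\dots,N_0(n)\}$ and once with $I=\{k+1,\dots,N_0(n)\}$, and subtract. The $j\ge k$ terms combine using $\frac{1/\mu_{k-1}(n)}{1/\mu_{k-1}(n)-1/\mu_j(n)}-1=\frac{1/\mu_j(n)}{1/\mu_{k-1}(n)-1/\mu_j(n)}$, and after pulling the common factor $\prod_{j=k}^{N_0(n)-1}1/\mu_j(n)$ out front one lands exactly on \eqref{o1.2}; the $j=k-1$ term, which appears only in the $I=\{k,\dots,N_0(n)\}$ expansion, is already of that form. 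Formula \eqref{s30.3} is then forced by $\sum_{k=0}^{N_0(n)}p_k(n,\ell)=1$. The conceptual heart is the time change of the first paragraph — in particular checking that $X_n$ cannot move up and that $0$ is absorbing on the window $[0,\mc{T}_n]$, so that $Y$ really is the claimed pure-death chain; the rest is routine, the only care needed being the partial-fraction bookkeeping that produces the precise constants in \eqref{o1.2}.
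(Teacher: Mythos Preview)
Your argument is correct and lands on the same key representation as the paper, namely
\[
\{G_n=k\}=\Big\{\textstyle\sum_{j=k}^{N_0(n)-1}\mu_j(n)E_j<\ell^2/2\le\sum_{j=k-1}^{N_0(n)-1}\mu_j(n)E_j\Big\}
\]
with independent mean-one exponentials (the paper's \eqref{s29.1}), followed by the hypoexponential formula. The difference is only in packaging. The paper reaches \eqref{s29.1} by solving, at each site, the quadratic for the holding time (its \eqref{Time_Spent}--\eqref{Time_Spent2}) and telescoping the recursion $L_n^2(\tau_{i+1})=L_n^2(\tau_i)-2\mu_{N_0(n)-i-1}(n)E_{i+1}$; your time change $\tau=\tfrac12(\ell^2-L_n(t)^2)$ is exactly the conceptual statement of that recursion and yields the pure-death chain in one line. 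Likewise, the paper computes $p_k$ by integrating the density \eqref{s27.1} against the survival of one more exponential and then invoking the partial-fraction identity \eqref{o1.1}, whereas you subtract two survival functions and simplify; the algebra is the same, but your route avoids the explicit integral. So: same proof, with a cleaner presentation on your side.

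One small slip: when $k=N_0(n)$ the empty sum $\sum_{i=k+1}^{N_0(n)}H_i$ equals $0$, so the second probability $\P(0>t)$ is $0$, not $1$. Your conclusion for \eqref{s30.1} is unaffected, since then $p_{N_0(n)}(n,\ell)=\P(H_{N_0(n)}>t)-0=\exp\!\big(-\ell^2/(2\mu_{N_0(n)-1}(n))\big)$ as stated.
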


\begin{proposition}\label{CondExitVel}  Assume  either $\bold{F1}$ or $\bold{F'}$. Given $k\in \prt \calD_n^-$ and $\ell<0$, and conditional on $X_n(0) =N_0(n)$, $L_n(0) = \ell$, and $G_n = k $, the distribution of  $L_n(\mc{U}_n)$ is the same as that of
\begin{align}\label{s29.3}
\left(2\sum_{j=k}^{N_0(n)} \lambda_j(n) E_j\right)^{1/2},
\end{align}  
where $E_k, \cdots, E_{N_0(n)}$ are i.i.d. exponential random variables with mean 1.  The density  of this random variable is equal to
\begin{align}\label{s29.4}
f_{k,n}(r) &:=
r\left(\prod_{j=k}^{N_0(n)}\frac1{\lambda_j(n)} \right)
\sum_{j=k}^{N_0(n)} \left(\exp\left(-\frac{r^2}{2\lambda_j(n)}\right) 
\prod_{\substack{i=k\\i \ne j}}^{N_0(n)}
\frac 1 {1/\lambda_i(n) - 1/\lambda_j(n)}
\right).
\end{align}
\end{proposition}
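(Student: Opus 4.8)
\textbf{Proof proposal for Proposition \ref{CondExitVel}.}

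The plan is to track the process $L_n$ from the level-crossing time $\mc{T}_n$ to the exit time $\mc{U}_n$, conditionally on $G_n=k$. After $\mc{T}_n$ the ``velocity'' $L_n$ is nonnegative, so by the noiseless transition rates \eqref{s21.7}–\eqref{s21.8} the particle $X_n$ can only move \emph{up}, from $k$ to $k+1$ to $\dots$ to $N_0(n)$ and then out of the boundary layer. While $X_n$ sits at site $j$, the memory accumulates at rate $v_j(n)$, and the rate of the (unique available) jump $j\to j+1$ is $a_{j,j+1}(L_n)=c_j(n)L_n$. The first key step is therefore: conditionally on $L_n(\mc{T}_n)$ having some value $\ell_j\geq 0$ upon arrival at site $j$, compute the distribution of the additional memory gained while at $j$. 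Writing $L_n(t)=\ell_j+v_j(n)(t-s)$ during the sojourn at $j$ starting at time $s$, the exit time satisfies $\int_0^\tau c_j(n)(\ell_j+v_j(n)u)\,du = E$ for an independent mean-one exponential $E$, i.e. $c_j(n)\ell_j\tau + \tfrac12 c_j(n)v_j(n)\tau^2 = E$. Hence the memory gained, $\Delta_j := v_j(n)\tau$, solves $\tfrac{c_j(n)}{v_j(n)}\big(\ell_j\Delta_j + \tfrac12\Delta_j^2\big)=E$, so that $\tfrac12(\ell_j+\Delta_j)^2 = \tfrac12\ell_j^2 + \tfrac{v_j(n)}{c_j(n)}E = \tfrac12\ell_j^2+\lambda_j(n)E$. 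Therefore $(\ell_j+\Delta_j)^2 = \ell_j^2 + 2\lambda_j(n)E_j$ with $E_j$ exponential mean one, independent of everything so far.

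The second step is to iterate this identity. Writing $R_j$ for the value of $L_n$ upon arrival at site $j$ (for $j=k,k+1,\dots,N_0(n)$) and $R_{N_0(n)+1}$ for $L_n(\mc{U}_n)$, the computation above gives $R_{j+1}^2 = R_j^2 + 2\lambda_j(n)E_j$ for $j=k,\dots,N_0(n)$, with $E_k,\dots,E_{N_0(n)}$ i.i.d.\ exponential mean one and independent of $R_k$. Telescoping yields
\begin{align*}
L_n(\mc{U}_n)^2 = R_k^2 + 2\sum_{j=k}^{N_0(n)}\lambda_j(n)E_j.
\end{align*}
The third, and most delicate, step is to argue that conditionally on $\{G_n=k\}$ the ``initial'' value $R_k$ equals $0$: this is the assertion that when the memory $L_n$ crosses zero it does so \emph{exactly} at $0$, not by an overshoot. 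Here is where the structure of the model — memory changing continuously (linearly in time) while $X_n$ is fixed, combined with the fact that at site $k=G_n$ the accumulation rate changes sign relative to the incoming passage — must be invoked: $L_n$ is a continuous, piecewise-linear function of $t$, so $L_n(\mc{T}_n)=0$ by definition of $\mc{T}_n$ as the first hitting time of $[0,\infty)$ by a continuous path started at $\ell<0$. One must also check that $X_n$ does not jump at the instant $\mc{T}_n$ (a.s.\ no simultaneity, since the jump rate $c_{G_n-1}(n)|L_n|$ vanishes as $L_n\to 0^-$, so no downward jump can occur at the crossing, and no upward jump is possible while $L_n<0$), so that $R_k = L_n(\mc{T}_n) = 0$. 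With $R_k=0$ this gives exactly \eqref{s29.3}.

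Finally, the density formula \eqref{s29.4} follows by a standard computation: if $Y=\big(2\sum_{j=k}^{N_0(n)}\lambda_j(n)E_j\big)^{1/2}$ then $Y^2/2$ is a sum of independent exponentials with distinct rates $1/\lambda_j(n)$ (distinctness being guaranteed by $\bold{F1}$, or by $\bold{F'}$ via the implication $\bold{F'}\Rightarrow\bold{F1}$ established in the text), whose density is the classical hypoexponential (generalized Erlang) density obtained by a partial-fraction expansion of the product of Laplace transforms $\prod_j (1/\lambda_j(n))/(1/\lambda_j(n)+s)$; a change of variables $r\mapsto r^2/2$ (Jacobian $r\,dr$) then produces the factor $r$ and the stated sum. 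The main obstacle is really the third step — rigorously excluding an overshoot at the zero-crossing of $L_n$ and the simultaneity of a jump at that time — while the remaining algebra is routine once the recursion $R_{j+1}^2=R_j^2+2\lambda_j(n)E_j$ is in hand.
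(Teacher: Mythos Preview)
Your proposal is correct and follows essentially the same route as the paper. Both arguments compute the single-site recursion $R_{j+1}^2 = R_j^2 + 2\lambda_j(n)E_j$ by solving the quadratic $\int_0^\tau c_j(n)(\ell_j+v_j(n)u)\,du=E_j$, then telescope from $j=k$ to $N_0(n)$; the density is then read off from the known hypoexponential density (the paper's formula \eqref{s27.1}), using $\bold{F1}$ (or $\bold{F'}\Rightarrow\bold{F1}$) for distinctness of the $\lambda_j(n)$. The only cosmetic difference is that the paper packages your ``third step'' as an application of the strong Markov property at $\mc{T}_n$, reducing to the initial condition $(X_n(0),L_n(0))=(k,0)$, whereas you argue directly that $R_k=L_n(\mc{T}_n)=0$ by continuity of $L_n$ and absence of a jump at the crossing---both viewpoints are equivalent and your justification of the no-overshoot/no-simultaneous-jump point is sound.
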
 

The following corollary follows easily from Propositions \ref{LevelDistrib} and \ref{CondExitVel} and the strong Markov property applied at $\mc{T}_n$, so we will not supply a formal proof.

\begin{corollary}\label{o2.7}
Assume  $\bold{F'}$. If $ \ell<0$, $\mc{V}_n(\ell)$ is the same as the distribution of
\begin{align}\label{s24.5}
 \sum_{k=0}^{N_0(n)}\left(2\sum_{j=k}^{N_0(n)}\lambda_j(n)E_j\right)^{1/2} \bone_{Z=k},
\end{align}
where $E_j$'s are are i.i.d. exponential random variables with mean 1 and $Z$ is an independent random variable with 
$\P(Z=j) = p_j(n,\ell)$ for $j\in \prt \calD_n^-$, where $p_j(n,\ell)$ are as in \eqref{s30.1}-\eqref{s30.3}.
\end{corollary}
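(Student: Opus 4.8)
The plan is to split the sojourn of $X_n$ in the lower boundary region $\prt\calD_n^-$ at the stopping time $\mc{T}_n$, use Proposition \ref{LevelDistrib} for the law of the sign-change level $G_n$, use Proposition \ref{CondExitVel} for the exit velocity given that level, and glue the two together with the strong Markov property of $(X_n,L_n)$.

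First I would record the elementary structural facts. Starting from $X_n(0)=N_0(n)$ and $L_n(0)=\ell<0$, the memory $L_n$ has continuous paths and, as long as $X_n\in\prt\calD_n^-$, it is strictly increasing, since $v_j(n)>0$ there by Assumption \ref{o8.1}(iv); moreover $X_n$ can only move downward while $L_n<0$, because $a_{i,i+1}(\ell)=0$ for $\ell<0$ by \eqref{s21.7}--\eqref{s21.8}. Hence $X_n$ does not leave $\prt\calD_n^-$ before $\mc{T}_n$, the time $\mc{T}_n$ is finite a.s., and by continuity of $L_n$ we have $L_n(\mc{T}_n)=0$ and $G_n=X_n(\mc{T}_n)\in\{0,1,\dots,N_0(n)\}=\prt\calD_n^-$. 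After $\mc{T}_n$ the velocity is positive, so $X_n$ moves monotonically upward until it reaches $N_0(n)+1$ at the finite time $\mc{U}_n$, at which point $L_n(\mc{U}_n)$ is the accumulated memory.

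Next I would condition on $\{G_n=k\}$ for each $k\in\prt\calD_n^-$. Since $\bold{F'}$ implies both $\bold{F1}$ and $\bold{F2}$ (shown in Section \ref{Sect1}), Proposition \ref{LevelDistrib} applies and gives $\P(G_n=k\mid X_n(0)=N_0(n),\,L_n(0)=\ell)=p_k(n,\ell)$ with $p_k(n,\ell)$ as in \eqref{s30.1}--\eqref{s30.3}; in particular these are genuine probabilities. Because $\mc{T}_n$ is a stopping time and $(X_n,L_n)$ is strong Markov, conditionally on $\{G_n=k\}$ the post-$\mc{T}_n$ evolution is that of the process started from $(X_n,L_n)=(k,0)$ and is independent of $\calF_{\mc{T}_n}$. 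Proposition \ref{CondExitVel} then identifies the conditional law of $L_n(\mc{U}_n)$ given $X_n(0)=N_0(n)$, $L_n(0)=\ell$ and $G_n=k$ as the law of $\bigl(2\sum_{j=k}^{N_0(n)}\lambda_j(n)E_j\bigr)^{1/2}$ for i.i.d.\ mean-one exponentials $E_k,\dots,E_{N_0(n)}$. Finally I would assemble the mixture: realize $Z$ with $\P(Z=k)=p_k(n,\ell)$ together with an independent i.i.d.\ family $E_0,\dots,E_{N_0(n)}$ of mean-one exponentials on one probability space; then $\P(G_n=k)=\P(Z=k)$ and the conditional law of $L_n(\mc{U}_n)$ given $G_n=k$ equals that of $\bigl(2\sum_{j=k}^{N_0(n)}\lambda_j(n)E_j\bigr)^{1/2}$, so writing $\bigl(2\sum_{j=Z}^{N_0(n)}\lambda_j(n)E_j\bigr)^{1/2}=\sum_{k=0}^{N_0(n)}\bigl(2\sum_{j=k}^{N_0(n)}\lambda_j(n)E_j\bigr)^{1/2}\bone_{Z=k}$ shows that $\mc{V}_n(\ell)$ equals the law of \eqref{s24.5}.

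The only point needing care—and the reason this is a corollary rather than immediate—is the bookkeeping in the last step. Proposition \ref{CondExitVel} phrases the conditional exit velocity with exponentials indexed from $k$ onward, so one must observe that here only the one-dimensional marginal of $L_n(\mc{U}_n)$ is required: it is therefore legitimate to use a single independent family $E_0,\dots,E_{N_0(n)}$ and read off the $k$-th tail sum, without asserting any joint coupling across different values of $k$. I expect this to be the main (and essentially only) obstacle, and it is a minor one, which is why no formal proof is supplied in the text.
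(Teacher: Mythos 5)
Your proposal is correct and follows exactly the route the paper indicates for this corollary (which it states without formal proof): combine Proposition \ref{LevelDistrib} for the law of $G_n$ with Proposition \ref{CondExitVel} for the conditional exit velocity, gluing them via the strong Markov property at $\mc{T}_n$. Your added remark that only the one-dimensional marginal of $L_n(\mc{U}_n)$ matters, so no joint coupling across values of $k$ is needed, is a correct and sensible piece of bookkeeping.
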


\begin{theorem}\label{ThinBoundaries}
Suppose that $\ell_n < 0$ for $n\geq 1$ and  $\lim_{n\rightarrow \infty}\ell_n =\ell < 0$. Assume  $\bold{F1}$-$\bold{F2}$ or $\bold{F'}$, and $\bold{F3}$. Suppose that $E_1, E_2, \dots$ are i.i.d. exponential random variables with mean 1. 

(i) Assume that $N_0(n) = N< \infty$ for all $n$. Then for every $k\in \prt \calD_n^-$, the following limit exists,
\begin{align}\label{s30.4}
p_k(\ell) := \lim_{n\rightarrow\infty} p_k(n, \ell_n).
\end{align} 

(ii) Assume that $N_0(n) = N< \infty$. Then, when $n\to \infty$, $\mc{V}_n(\ell_n)$ converge  to the distribution of
\begin{align*}
\sum_{j=0}^N \left(2\sum_{i=j}^N\lambda_i E_i\right)^{1/2}
\bone_{Z(\ell)=j},
\end{align*}
where $Z(\ell)$ is a random variable
with values in $\{0,1, \dots, N\}$, independent of $E_j$'s and such that
 $\mathbb{P}(Z(\ell)=j)=p_j(\ell)$, $ j=0,\dots, N$. The values of $p_j(\ell)$, $j=0,\dots, N$ are given by \eqref{s16.1}-\eqref{s23.5}.

(iii) Assume that $N_0(n) = 1$. Then, when $n\to \infty$, $\mc{V}_n(\ell_n)$ converge  to the distribution of
$$\sqrt{2\left(E_0 + \beta_1 E_1\right)}\bone_{Z(\ell)=0}+ \sqrt{2\beta_1 E_1}\bone_{Z(\ell)=1},$$ 
where $Z(\ell)$ is a random variable independent of the collection of $E_j$'s such that $\mathbb{P}(Z(\ell)=j)= p_j(\ell),\; j=0,1$. The values of $p_0$ and $p_1$ are given by \eqref{s23.4}.

\end{theorem}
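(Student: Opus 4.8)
The plan is to obtain all three parts by passing to the limit $n\to\infty$ in the exact, finite closed forms already established in Propositions~\ref{LevelDistrib} and~\ref{CondExitVel} and Corollary~\ref{o2.7}. Since $N_0(n)=N$ is held fixed throughout this theorem, every sum and product involved has a fixed, finite number of terms, so no tail estimates will be needed; the only analytic input is the convergence of the coefficients. By Remark~\ref{s30.20}(ii)--(iii), under $\bold{F3}$ together with $\bold{F'}$ (or with $\bold{F1}$-$\bold{F2}$) one has $\lambda_i(n)\to\lambda_i$ and $\mu_i(n)\to\mu_i$ with all limits strictly positive, and under $\bold{F'}$ also $\mu_0>\mu_1>\cdots>\mu_{N-1}>0$; recall too that $\ell_n\to\ell<0$.

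For part (i), I would first note that, for $N_0(n)=N$, the right-hand sides of \eqref{s30.1}, \eqref{o1.2} and \eqref{s30.3} are fixed expressions built from $\exp$ and rational functions of the variables $\ell$ and $\mu_0,\dots,\mu_{N-1}$, jointly continuous wherever $\mu_0,\dots,\mu_{N-1}$ are positive and pairwise distinct. By $\bold{F2}$ or $\bold{F'}$ the vector $(\mu_0(n),\dots,\mu_{N-1}(n))$ lies in this region for every $n$, and under $\bold{F'}+\bold{F3}$ so does its limit, because the $\mu_i$ are strictly decreasing. Hence $p_k(n,\ell_n)\to p_k(\ell)$, the value of the same expression at $(\ell,\mu_0,\dots,\mu_{N-1})$. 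I would then match this with Definition~\ref{def:vel}: for $1\le k\le N-1$ the limit is literally \eqref{s16.2}; for $k=N$, using $\mu_{N-1}=\beta_N/\sum_{j=0}^{N-1}\beta_j=\beta_N/(1-\beta_N)$ (from $\bold{F3}$ and $\sum_j\beta_j=1$, Remark~\ref{s30.20}(i)), the quantity $\exp(-\ell^2/(2\mu_{N-1}))$ is the right-hand side of \eqref{s16.1}; and for $k=0$, letting $n\to\infty$ in $p_0(n,\ell)=1-\sum_{k=1}^N p_k(n,\ell)$ gives \eqref{s23.5}. If only $\bold{F1}$-$\bold{F2}$ is assumed and two limiting $\mu_i$ coincide, the limit still exists: the expression extends continuously across the diagonal (numerators vanishing to matching order) while $p_k(n,\ell_n)\in[0,1]$ stays bounded, so only the identification with \eqref{s16.2} need be read through this continuous extension.

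For part (ii), I would invoke Corollary~\ref{o2.7}: $\mc{V}_n(\ell_n)$ is the law of $W_n:=\sum_{k=0}^N\big(2\sum_{j=k}^N\lambda_j(n)E_j\big)^{1/2}\bone_{Z_n=k}$, where $E_0,\dots,E_N$ are i.i.d.\ mean-one exponentials independent of $Z_n$ and $\P(Z_n=k)=p_k(n,\ell_n)$; the conclusion of Corollary~\ref{o2.7} holds under $\bold{F1}$-$\bold{F2}$ as well, since its proof uses only Propositions~\ref{LevelDistrib} and~\ref{CondExitVel} and the strong Markov property. Fixing one sequence $(E_j)_{j=0}^N$ valid for all $n$ and using that this is a finite mixture, for bounded continuous $g$ one has $\E[g(W_n)]=\sum_{k=0}^N p_k(n,\ell_n)\,\E\big[g\big((2\sum_{j=k}^N\lambda_j(n)E_j)^{1/2}\big)\big]$. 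As $n\to\infty$ the weights converge to $p_k(\ell)$ by part (i), and since $\lambda_j(n)\to\lambda_j>0$, the variable $(2\sum_{j=k}^N\lambda_j(n)E_j)^{1/2}$ converges almost surely (its radicand staying positive), so by bounded convergence each inner expectation converges to $\E\big[g\big((2\sum_{j=k}^N\lambda_j E_j)^{1/2}\big)\big]$. Therefore $\E[g(W_n)]\to\E[g(W_\infty)]$ with $W_\infty:=\sum_{k=0}^N(2\sum_{j=k}^N\lambda_j E_j)^{1/2}\bone_{Z(\ell)=k}$, $\P(Z(\ell)=k)=p_k(\ell)$, $Z(\ell)$ independent of $(E_j)$, so $\mc{V}_n(\ell_n)$ converges weakly to the law of $W_\infty$---precisely the stated distribution, with weights as in part (i). One could instead run this through characteristic functions and Lévy's continuity theorem.

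Part (iii) I would treat as the instance $N=1$ of part (ii): here $\prt\calD_n^-=\{0,1\}$, so $\lambda_0(n)=1$, $\lambda_1(n)=v_1(n)/n\to\beta_1$, and $\mu_0(n)=v_1(n)/c_0(n)=v_1(n)/v_0(n)\to\beta_1/(1-\beta_1)$, while $\bold{F2}$ is vacuous and $\bold{F1}$ amounts to $v_1(n)\ne n$, which holds as $v_0(n)>0$, so Corollary~\ref{o2.7} is available. Substituting, the $k=0$ and $k=1$ summands of $W_\infty$ become $\sqrt{2(E_0+\beta_1 E_1)}$ and $\sqrt{2\beta_1 E_1}$, and \eqref{s30.1} and \eqref{s30.3} pass to the weights $\exp(-\ell^2/(2\mu_0))=\exp(-(1-\beta_1)\ell^2/(2\beta_1))$ and its complement, i.e.\ the values in \eqref{s23.4}, giving the displayed limiting law. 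The heavy lifting here is entirely in the cited propositions and corollary; given those, the argument is just a passage to a limit in finitely many terms plus an elementary interchange of limit with a finite mixture. The main point needing care---and the only place an obstacle could arise---is that the closed forms must be evaluated solely where the relevant $\mu_i$ (resp.\ $\lambda_i$) are positive and pairwise distinct, which is exactly what $\bold{F'}+\bold{F3}$ secures in the limit via $\mu_0>\cdots>\mu_{N-1}>0$; the remaining effort is the bookkeeping that recognizes the limiting closed forms as \eqref{s16.1}-\eqref{s23.5}.
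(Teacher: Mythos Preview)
Your approach is essentially the same as the paper's: pass to the limit in the finite closed forms of Proposition~\ref{LevelDistrib} and Corollary~\ref{o2.7}, using Remark~\ref{s30.20} to ensure the limiting $\mu_i$ are positive and distinct so no division by zero occurs, then observe that convergence of each component in the finite mixture together with convergence of the weights gives part~(ii), and part~(iii) is the case $N=1$. Your write-up is more detailed than the paper's (in particular the bookkeeping that identifies the limits with \eqref{s16.1}--\eqref{s23.5}, and your attempt to handle the $\bold{F1}$--$\bold{F2}$ branch where limiting $\mu_i$ might coincide), but the argument is the same.
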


\begin{remark} We presented the case $N_0(n)=1$ in Theorem \ref{ThinBoundaries}, in addition to  the general case $N_0(n)=N$, so that Theorem \ref{ThinBoundaries} (iii) may be directly compared to Theorem \ref{Cvge_Veloc}, its counterpart in the case of noisy soft boundary. 
\end{remark}

\begin{proposition}\label{CVGE_probs} Assume  $\bold{G1}$. Suppose that $\ell_n < 0$ for $n\geq 1$,  $\lim_{n\rightarrow \infty}\ell_n =\ell < 0$, and $\lim_{n\to \infty } N_0(n) =\infty$. Then, for every $k\geq 0$, the following limit exists,
\begin{align}\label{s23.6}
p_k(\ell) := \lim_{n\rightarrow\infty} p_k(n, \ell_n).
\end{align} 
\end{proposition}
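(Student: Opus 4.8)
The plan is to derive an explicit closed-form expression for $p_k(n,\ell)$ from Proposition \ref{LevelDistrib} that is amenable to passing to the limit, and then to use Assumption $\bold{G1}$ together with a dominated-convergence-type argument. The formula \eqref{o1.2} for $p_k(n,\ell)$ is a divided-difference expression in the variables $1/\mu_j(n)$; the key observation is that, by a standard identity for divided differences (equivalently, by contour-integral or partial-fraction manipulation), the sum
\[
\sum_{j=k-1}^{N_0(n)-1}\exp\!\left(-\frac{\ell^2}{2\mu_j(n)}\right)\prod_{\substack{i=k-1\\ i\neq j}}^{N_0(n)-1}\frac{1}{1/\mu_i(n)-1/\mu_j(n)}
\]
equals the divided difference of the function $x\mapsto e^{-\ell^2 x/2}$ at the nodes $\{1/\mu_{k-1}(n),\dots,1/\mu_{N_0(n)-1}(n)\}$, hence has an integral representation
\[
\int_{\Delta_{N_0(n)-k+1}} \Big(\tfrac{\ell^2}{2}\Big)^{N_0(n)-k+1}\!\!\exp\!\Big(-\tfrac{\ell^2}{2}\sum_{i=k-1}^{N_0(n)-1}\!s_i\,/\mu_i(n)\Big)\,ds
\]
over a simplex (this is exactly the Hermite–Genocchi formula). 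Combined with the prefactor $\prod_{j=k}^{N_0(n)-1}1/\mu_j(n)$, this rewrites $p_k(n,\ell)$ as an expectation of the form $\mathbb P\big(\text{some functional of }(\mu_j(n))_{j}\text{ and i.i.d.\ exponentials}\big)$; in fact this is precisely the probabilistic content of Proposition \ref{LevelDistrib}, namely $p_k(n,\ell)=\mathbb P(G_n=k\mid\cdots)$ expressed through the partial sums $\sum_{j=k}^{i}$-type quantities built from $\mu_j(n)$.

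The second step is to pass to the limit. Under $\bold{G1}$, $(\mu'_j(n))_{j\ge 0}\to(\mu_j)_{j\ge 0}$ in $\ell^1$, so in particular $\mu_j(n)\to\mu_j$ for each fixed $j$ and $\sum_{j\ge N}\mu_j(n)\to 0$ uniformly in $n$ as $N\to\infty$. I would exploit the probabilistic picture: conditionally, $G_n$ is the first index $i$ at which a certain sum of independent exponential variables weighted by $\mu_j(n)$ exceeds the threshold $\ell_n^2/2$ (this is the "level at which $L_n$ changes sign" in Section \ref{soft_boundary}). Writing $S_i(n)=\sum_{j=0}^{i-1}\mu_j(n)E_j$ with the convention that the walk runs through the boundary layer from level $N_0(n)$ downward, the event $\{G_n=k\}$ is $\{S_{N_0(n)-k}(n)\le \ell_n^2/2 < S_{N_0(n)-k+1}(n)\}$ for $0<k\le N_0(n)$ (with the appropriate reindexing), and the $\ell^1$-convergence of the weights gives $S_m(n)\to\sum_{j=0}^{m-1}\mu_j E_j=:S_m$ almost surely and in $L^1$, with $S_\infty(n)=\sum_j\mu_j(n)E_j$ tight and converging to $S_\infty=\sum_j\mu_j E_j<\infty$ a.s. Since $\ell_n^2/2\to\ell^2/2$ and the limiting partial-sum process $(S_m)_{m\ge 0}$ has no atoms at any fixed level, the events converge and $p_k(n,\ell_n)\to \mathbb P(S_{\infty(k)}\le \ell^2/2<S_{\infty(k)+1})=:p_k(\ell)$; for $k=0$ one takes complements and uses $p_0(n,\ell_n)=1-\sum_{k\ge 1}p_k(n,\ell_n)$ together with the uniform tightness to justify interchanging the limit with the (now infinite) sum.

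The main obstacle I anticipate is controlling the tail uniformly in $n$: because $N_0(n)\to\infty$, the sum $\sum_{k=1}^{N_0(n)}p_k(n,\ell_n)$ has a growing number of terms, and one must show $\sum_{k>M}p_k(n,\ell_n)\to 0$ as $M\to\infty$ uniformly in $n$ in order to both define $p_0(\ell)$ and to rule out escaping mass (which would correspond to $G_n=0$, i.e.\ the particle reaching the endpoint before the velocity flips). This is exactly where $\bold{G1}$ is used quantitatively: the $\ell^1$-smallness of the tail $\sum_{j\ge M}\mu_j(n)$ bounds $\mathbb P(S_\infty(n)-S_M(n)>\varepsilon)$ uniformly, hence controls $\sum_{k\le N_0(n)-M}p_k(n,\ell_n)$. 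Once this uniform tail estimate is in place, the pointwise convergence of each $p_k(n,\ell_n)$ together with Fatou/Scheffé arguments closes the proof. A minor technical point to handle carefully is that the simplex-integral (Hermite–Genocchi) representation requires the nodes $1/\mu_j(n)$ to be distinct, which is guaranteed under $\bold{F'}$ via Remark \ref{s30.20}(iii) but should be stated, or else one works directly with the probabilistic representation which needs no distinctness hypothesis at all — I would favor the latter route precisely to sidestep this.
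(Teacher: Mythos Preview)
Your approach is correct and, once you abandon the Hermite--Genocchi detour in favor of the probabilistic representation (as you yourself recommend at the end), it coincides with the paper's. The paper works with the tail sums $Y_{n,k}=\sum_{j\ge k}\mu'_j(n)E_j$ and $Y_k=\sum_{j\ge k}\mu_jE_j$, writes
\[
p_k(n,\ell_n)=\P\bigl(Y_{n,k-1}\ge \ell_n^2/2\bigr)-\P\bigl(Y_{n,k}\ge \ell_n^2/2\bigr),\qquad k\ge1,
\]
directly from \eqref{s29.1}, and then shows each survival probability converges using $\bold{G1}$: one has $\E|Y_{n,k}-Y_k|\le\sum_{j\ge k}|\mu'_j(n)-\mu_j|\to0$, hence $Y_{n,k}\to Y_k$ in distribution, and since $Y_k$ has a density (it contains the summand $\mu_kE_k$) Portmanteau gives convergence at the fixed level $\ell^2/2$, with a short $\varepsilon$-argument to absorb $\ell_n\to\ell$.

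The one place the paper is tidier is precisely the obstacle you flag for $k=0$. No uniform tail control or Fatou/Scheff\'e is needed: the displayed differences telescope, so $p_0(n,\ell_n)=1-\sum_{k\ge1}p_k(n,\ell_n)=\P(Y_{n,0}<\ell_n^2/2)$, which converges by the same one-line argument. Indexing via tail sums $Y_{n,k}$ (with $k$ fixed as $n\to\infty$) rather than initial sums $S_{N_0(n)-k}(n)$ is what makes this transparent and removes the need to track a moving upper index.
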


When $N_0(n) \to \infty$ as $n\to\infty$, the counterpart of Theorem \ref{ThinBoundaries} is the following. 

\begin{theorem}\label{Infinite_layers}
Assume  $\bold{G1}$-$\bold{G2}$. Suppose that $\ell_n < 0$ for $n\geq 1$, $\lim_{n\rightarrow \infty}\ell_n =\ell < 0$, and $\lim_{n\to \infty } N_0(n) =\infty$. 
 Then, when $n\to \infty$, $\mc{V}_n(\ell_n)$ converge  to the distribution of
\begin{align*}
\sum_{j=0}^\infty \left(2\sum_{i=j}^\infty\lambda_i E_i\right)^{1/2}
\bone_{Z(\ell)=j},
\end{align*}
where $E_1, E_2, \dots$ are i.i.d.\ exponential random variables with mean 1 and $Z(\ell)\geq 0$ is independent of the collection of $E_j$'s such that
 $\mathbb{P}(Z(\ell)=j)=p_j(\ell)$, $ j\geq0$. The probabilities $p_j(\ell)$ are defined in \eqref{s23.6}.
\end{theorem}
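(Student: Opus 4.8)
The plan is to push everything through the mixture representation of $\mc{V}_n(\ell_n)$ supplied by Corollary~\ref{o2.7}. Realize, on a single probability space, one i.i.d.\ sequence $E_0,E_1,\dots$ of mean-one exponentials together with, for each $n$, an independent $\mathbb{N}$-valued random variable $Z_n$ with $\P(Z_n=j)=p_j(n,\ell_n)$, $j\in\prt\calD_n^-$; then $\mc{V}_n(\ell_n)$ is the law of
\[
W_n:=\Bigl(2\sum_{j=Z_n}^{N_0(n)}\lambda_j(n)E_j\Bigr)^{1/2},
\]
while the target is the law of $W:=\bigl(2\sum_{j\ge Z(\ell)}\lambda_j E_j\bigr)^{1/2}$, with $Z(\ell)$ independent of the $E_j$'s and $\P(Z(\ell)=j)=p_j(\ell)$. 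Keeping the same sequence $(E_j)$ for all $n$ decouples the task into: (a) for each fixed $k$, the building block $S_k(n):=\bigl(2\sum_{j=k}^{N_0(n)}\lambda_j(n)E_j\bigr)^{1/2}$ converges to $S_k:=\bigl(2\sum_{j\ge k}\lambda_j E_j\bigr)^{1/2}$; and (b) $Z_n$ converges in distribution to $Z(\ell)$. These two limits are then recombined by conditioning on the random index.

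Part (a) follows at once from $\bold{G2}$, which gives $(\lambda_j)_{j\ge0}\in\ell^1$, so that $\sum_{j\ge k}\lambda_j E_j$ converges a.s.; writing $\sum_{j=k}^{N_0(n)}\lambda_j(n)E_j=\sum_{j\ge k}\lambda'_j(n)E_j$,
\[
\E\Bigl|\sum_{j\ge k}\lambda'_j(n)E_j-\sum_{j\ge k}\lambda_j E_j\Bigr|\le\sum_{j\ge k}|\lambda'_j(n)-\lambda_j|\le\|\lambda'(n)-\lambda\|_{\ell^1}\xrightarrow[n\to\infty]{}0,
\]
whence $S_k(n)\to S_k$ in probability and, a fortiori, $\E[e^{itS_k(n)}]\to\E[e^{itS_k}]$ for every $t\in\R$.

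For part (b), Proposition~\ref{CVGE_probs} already furnishes $p_j(n,\ell_n)\to p_j(\ell)$ for every $j$, so it suffices to rule out escape of mass to infinity, i.e.\ to show $\lim_{K\to\infty}\limsup_{n\to\infty}\P(G_n\ge K)=0$; this simultaneously certifies that $(p_j(\ell))_{j\ge0}$ is a probability distribution. Here I would use the mechanism behind Proposition~\ref{LevelDistrib}: while $X_n$ descends through $\prt\calD_n^-$, the quantity $\tfrac12 L_n^2$ evaluated at the successive arrival times starts from $\ell_n^2/2$ and decreases by independent amounts $\mu_{N_0(n)-1}(n)\widetilde E_1,\mu_{N_0(n)-2}(n)\widetilde E_2,\dots$, with the $\widetilde E_i$ i.i.d.\ mean-one exponential, and $G_n$ is the level at which the running sum of these amounts first exceeds $\ell_n^2/2$ (or $0$ if the walk reaches $0$ first). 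Consequently, for $n$ large enough that $N_0(n)\ge K$,
\[
\{G_n\ge K\}\subseteq\Bigl\{\sum_{i=1}^{N_0(n)-K+1}\mu_{N_0(n)-i}(n)\widetilde E_i\ge\ell_n^2/2\Bigr\},
\]
a sum whose coefficients are exactly $\mu_{K-1}(n),\dots,\mu_{N_0(n)-1}(n)$, so by Markov's inequality
\[
\limsup_{n\to\infty}\P(G_n\ge K)\le\limsup_{n\to\infty}\frac{2}{\ell_n^2}\sum_{j=K-1}^{N_0(n)-1}\mu_j(n)=\frac{2}{\ell^2}\sum_{j\ge K-1}\mu_j,
\]
using that $\bold{G1}$ gives $\sum_{j=K-1}^{N_0(n)-1}\mu_j(n)=\sum_{j\ge K-1}\mu'_j(n)\to\sum_{j\ge K-1}\mu_j$ and $\sum_j\mu_j<\infty$. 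Since $\ell<0$, the right-hand side tends to $0$ as $K\to\infty$, which yields $\sum_{j\ge0}p_j(\ell)=1$ and $Z_n\Rightarrow Z(\ell)$.

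It remains to recombine. By independence of $Z_n$ and $(E_j)$, for every $t\in\R$,
\[
\E\bigl[e^{itW_n}\bigr]=\sum_{j=0}^{N_0(n)}p_j(n,\ell_n)\,\E\bigl[e^{itS_j(n)}\bigr].
\]
Fix $K$ and split this sum at $j=K$. Since $|\E[e^{itS_j(n)}]|\le1$, the tail is bounded by $\sum_{j>K}p_j(n,\ell_n)$, which is small uniformly in large $n$ by part (b), and $\sum_{j>K}p_j(\ell)$ is small because $(p_j(\ell))$ sums to $1$; the head is a finite sum in which $p_j(n,\ell_n)\to p_j(\ell)$ and $\E[e^{itS_j(n)}]\to\E[e^{itS_j}]$ by parts (a)--(b). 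Letting $n\to\infty$ and then $K\to\infty$ gives $\E[e^{itW_n}]\to\sum_{j\ge0}p_j(\ell)\,\E[e^{itS_j}]=\E[e^{itW}]$, so $W_n\Rightarrow W$ by L\'evy's continuity theorem, which is the asserted convergence of $\mc{V}_n(\ell_n)$. The main obstacle in this program is the tightness of the laws of $Z_n$ (equivalently, $\sum_j p_j(\ell)=1$): the alternating-sum formula \eqref{o1.2} is useless for tail estimates, so one has to fall back on the ``energy-decrement'' description of the descent, the decisive point being that $\ell<0$ keeps $\ell_n^2/2$ bounded away from $0$ while $\bold{G1}$ keeps the total decrement budget $\sum_j\mu_j$ finite---together forcing $X_n$ to penetrate arbitrarily deep into the boundary layer with probability tending to $1$, so that no mass leaks to large $j$.
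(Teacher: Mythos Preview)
Your proof is correct and follows the same overall strategy as the paper: represent $\mc{V}_n(\ell_n)$ via Corollary~\ref{o2.7} as a mixture indexed by $Z_n$, show that the individual components converge using $\bold{G2}$ and an $L^1$ estimate, show that the mixing law converges using Proposition~\ref{CVGE_probs}, and then recombine. The paper packages the recombination step as Lemma~\ref{o16.1}, which is precisely your characteristic-function argument with the tail/head split.

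The one substantive difference is that you explicitly prove tightness of $(Z_n)$, i.e.\ $\sum_j p_j(\ell)=1$, via the energy-decrement description and Markov's inequality with $\bold{G1}$. The paper's proof simply asserts ``$\nu_n$ converge to $\nu$ by Proposition~\ref{CVGE_probs}'', leaving this point implicit (it can be read off from the telescoping formula $p_k(\ell)=\P(Y_{k-1}\ge\ell^2/2)-\P(Y_k\ge\ell^2/2)$ in that proof together with $\E Y_M\to0$, but this is not spelled out). Your identification of this as ``the main obstacle'' is apt, and your argument handles it cleanly.
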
 

\subsection{Noisy case} 

The following result is a noisy counterpart of Proposition \ref{LevelDistrib}. 

\begin{proposition}\label{Layer_Reverse} Assume that  $\ell<0$. 
Let $\beta_1(n) = c_0(n)/v_1(n)$ and $\beta_2(n) = b_1(n)/v_0(n)$.

(i)
If $\beta_1(n)\neq \beta_2(n)$ then,
\begin{align*}
 p_1(n, \ell)
&=\sum_{k\geq 0}\Bigg(\beta_1(n)^k\beta_2(n)^k \exp\left(-\beta_1(n)\ell^2/2\right)\times\\
&\qquad\times\int_0^{\ell^2/2}\Bigg[\sum_{i=1}^2\sum_{j=1}^k \frac{(-1)^{k-j}}{(j-1)!}u^{j-1}\exp(-(\beta_i(n)-\beta_1(n))u)\times\\
&\qquad\qquad\qquad\times \binom{2k-j-1}{k-j}\big(\beta_{3-i}(n)-\beta_i(n)\big)^{-(2k-j)}\Bigg]du\Bigg).
\end{align*}

(ii)
If  $\beta_1(n)= \beta_2(n)$ then,
\begin{align*}
 p_1(n, \ell)
=\sum_{k\geq 0} \frac{(\beta_1(n)\ell^2)^{2k} }{2^{2k}(2k)!}\exp\left(-\beta_1(n) \ell^2/2\right).
\end{align*}
\end{proposition}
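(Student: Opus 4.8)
\textbf{Proof proposal for Proposition \ref{Layer_Reverse}.}

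The plan is to track the process $X_n$ while it lives in the three-site boundary region $\{0,1,2\}$, starting from $X_n(0)=1$ with $L_n(0)=\ell<0$, and to compute the probability that the velocity $L_n$ becomes non-negative while $X_n$ is at site $1$ rather than at site $0$. Here $p_1(n,\ell)=\P(G_n=1)$ in the notation of \eqref{s30.22}. First I would record the dynamics: while $X_n\in\{0,1\}$ and $L_n<0$, the memory decreases at rate $v_0(n)$ at site $0$ and at rate $v_1(n)$ at site $1$ (with $v_j\le 0$ contributing $|\ell|$ in the schematics), and $X_n$ flips between $0$ and $1$ according to the noisy rates $c_0(n)|\ell|$ (from $1$ to $0$) and $b_1(n)|\ell|$ (from $0$ to $1$). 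Because transition rates are proportional to $|L_n|$ and $L_n$ evolves linearly in time on each excursion, the natural move is to use the ``clock change'' $\tau = \int_0^t |L_n(s)|\,ds$, which turns both the jump intensities into constants and linearizes the bookkeeping: in the $\tau$-clock, $X_n$ jumps $1\to 0$ at constant rate $c_0(n)$ and $0\to 1$ at constant rate $b_1(n)$, and the accumulated memory decrement is $\int v_{X_n}\,d(\text{time})$, which in the $\tau$-variable becomes a piecewise-linear functional. A cleaner substitution: let $u(t)$ be the amount of ``$\ell^2/2$-budget'' consumed, i.e. track $A(t)=\tfrac12(L_n(0)^2-L_n(t)^2)$, which increases at rate $v_{X_n(t)}|L_n(t)|$, while jumps of $X_n$ happen at rates $c_0(n)|L_n|$, $b_1(n)|L_n|$; dividing by $|L_n|$ shows that in the clock $\tau$, the budget accrues at rate $v_{X_n}$ and $X_n$ is a two-state Markov chain with constant rates. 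The event $\{G_n=1\}$ is then $\{$the budget reaches $\ell^2/2$ while $X_n=1\}$, i.e. $X_n$ is at $1$ at the moment a certain additive functional of a two-state continuous-time chain hits the level $\ell^2/2$.

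The key computation is therefore: for a two-state Markov chain on $\{0,1\}$ with jump rate $c_0(n)$ out of $1$ and $b_1(n)$ out of $0$, started at $1$, and with a clock running at speed $v_1(n)$ in state $1$ and $v_0(n)$ in state $0$, find the probability that when the total elapsed clock-time first equals $\ell^2/2$, the chain is in state $1$. I would decompose by the number $k$ of completed down-up round trips ($1\to 0\to 1$) before absorption. After normalizing time so that rates scale out, a single sojourn at state $1$ has an $\mathrm{Exp}$-type length contributing $v_1(n)$ times that length to the budget, and similarly at state $0$; since $v_0(n)+v_1(n)=n$ and $c_0(n)=v_0(n)+b_1(n)$, $c_1(n)=v_1(n)$ after scaling by $|\ell|$, the parameters $\beta_1(n)=c_0(n)/v_1(n)$ and $\beta_2(n)=b_1(n)/v_0(n)$ are exactly the ratios governing how much budget is spent per unit of ``jump clock'' in each state. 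Conditioning on $k$ round trips, the budget spent in the $1$-states is a sum of $k+1$ (scaled) exponentials with rate $\beta_1(n)$, the budget spent in the $0$-states is a sum of $k$ exponentials with rate $\beta_2(n)$; the event that absorption occurs in state $1$ after exactly $k$ round trips, with total budget $\ell^2/2$, is the event that this sum of $2k$ exponentials is $\le \ell^2/2$ \emph{and} the next $1$-sojourn outlasts the remaining budget. Writing the joint density of $(\text{sum of }k\ \beta_1\text{-exponentials},\ \text{sum of }k\ \beta_2\text{-exponentials})$ as a convolution and using the survival probability $\exp(-\beta_1(n)(\ell^2/2 - u))$ for the last $1$-sojourn (this is where the factor $\beta_1(n)^k\beta_2(n)^k\exp(-\beta_1(n)\ell^2/2)$ and the $u$-integral come from), one gets in case (i) the stated sum over $k$ of an integral over $u\in[0,\ell^2/2]$ of a gamma-type density. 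The inner bracket, with its $\binom{2k-j-1}{k-j}$ and $(\beta_{3-i}-\beta_i)^{-(2k-j)}$ factors and the alternating sign $(-1)^{k-j}$, is precisely the partial-fraction expansion of the Laplace-domain product $\big(\beta_1/(\beta_1+s)\big)^k\big(\beta_2/(\beta_2+s)\big)^k$ inverted back to the $u$-domain — i.e. the density of a sum of $k$ $\mathrm{Exp}(\beta_1)$ and $k$ $\mathrm{Exp}(\beta_2)$ variables, expanded by residues at $s=-\beta_1$ and $s=-\beta_2$. In case (ii), $\beta_1(n)=\beta_2(n)$, the sum of $2k$ i.i.d.\ $\mathrm{Exp}(\beta_1(n))$ variables is $\mathrm{Gamma}(2k,\beta_1(n))$, whose density is $\beta_1(n)^{2k}u^{2k-1}e^{-\beta_1(n)u}/(2k-1)!$; integrating $u$ against the survival factor $e^{-\beta_1(n)(\ell^2/2-u)}$ over $[0,\ell^2/2]$ collapses to $\beta_1(n)^{2k}(\ell^2/2)^{2k}e^{-\beta_1(n)\ell^2/2}/(2k)!$, which is the stated closed form.

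So the steps, in order, are: (1) set up the boundary-layer dynamics on $\{0,1,2\}$ and note that $G_n=1$ iff the chain absorbs at $1$ with the $\ell^2/2$-budget exhausted; (2) perform the clock change $\tau=\int|L_n|\,ds$ to reduce to a two-state chain with constant jump rates and an additive budget functional with speeds $v_0(n),v_1(n)$; (3) condition on the number $k$ of down-up excursions and write the exact joint law of the budget split between the two states as sums of exponentials with rates $\beta_1(n)$ and $\beta_2(n)$; (4) in case (i), invert the product of Laplace transforms by residues to produce the partial-fraction density in the bracket, and incorporate the last-sojourn survival factor to get the $u$-integral and the prefactor; (5) in case (ii), specialize to the gamma density and integrate explicitly. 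The main obstacle I anticipate is step (4): carefully justifying the partial-fraction expansion with the correct combinatorial coefficients $\binom{2k-j-1}{k-j}$ and exponents $-(2k-j)$ — these arise from differentiating a double pole of order $k$ the appropriate number of times — and keeping the index bookkeeping ($j$ running from $1$ to $k$, $i$ over the two rates) consistent with the convergence of the $k$-sum. A secondary point requiring care is verifying absolute convergence of the series over $k$ so that the interchange of sum and integral is legitimate, which follows because each term is bounded by the probability of making $\ge k$ round trips, a geometrically small quantity.
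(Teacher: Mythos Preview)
Your proposal is correct and takes essentially the same route as the paper: condition on the number $k$ of down--up round trips, identify the budget increments as independent exponentials with parameters $\beta_1(n),\beta_2(n)$ (the paper does this via the direct quadratic-formula computation from the proof of Proposition~\ref{LevelDistrib}, you via the clock change $\tau=\int|L_n|\,ds$), apply the convolution formula for the sum of $k$ of each (the paper quotes it from \cite{JK} as Remark~\ref{o4.1}(iii), you obtain it by partial fractions in the Laplace domain), and integrate against the survival factor $e^{-\beta_1(n)(\ell^2/2-u)}$. The Gamma specialization in case~(ii) is handled identically in both.
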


\begin{corollary}\label{Limit_Layer_reverse} Assume  $\bold{F3}$ and $\bold{K}$.
Suppose that $\ell_n < 0$ for $n\geq 1$ and  $\lim_{n\rightarrow \infty}\ell_n =\ell < 0$. Then $p_1(\ell) = \lim_{n\rightarrow\infty}p_1(n,\ell_n) $ exists.  
\end{corollary}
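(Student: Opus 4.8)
The plan is to identify the limits of the two parameters $\beta_1(n)=c_0(n)/v_1(n)$ and $\beta_2(n)=b_1(n)/v_0(n)$ appearing in Proposition~\ref{Layer_Reverse}, to rewrite $p_1(n,\ell)$ as a single fixed function $\Psi$ of $\beta_1(n)$, $\beta_2(n)$ and $\ell$, and then to show that $\Psi$ is continuous, so that $\lim_n p_1(n,\ell_n)$ exists automatically. The first part is immediate: in the noisy case with $N_0(n)=N_1(n)=1$ the balance equations derived in Section~\ref{Sect1} read $c_0(n)=v_0(n)+b_1(n)$ and $v_0(n)+v_1(n)=c_1(n)=n$, so dividing by $n$ and invoking $\bold{F3}$ (hence $v_j(n)/n\to\beta_j>0$, $j=0,1$, with $\beta_0+\beta_1=1$) and $\bold{K}$ (hence $b_1(n)/n\to\vartheta_1\ge0$) gives $\beta_1(n)\to\gamma_1:=(\beta_0+\vartheta_1)/\beta_1\in(0,\infty)$ and $\beta_2(n)\to\gamma_2:=\vartheta_1/\beta_0\in[0,\infty)$.

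Reparametrising the boundary-layer dynamics by the memory instead of by time turns the walk on $\{0,1\}$, run up to the first time $L_n$ reaches $0$, into the following \emph{memory-clock process}: started at site $1$ with memory $\ell<0$, it jumps from site $1$ to $0$ with hazard $\beta_1(n)|m|$ per unit memory, jumps from site $0$ to $1$ with hazard $\beta_2(n)|m|$ per unit memory, and is stopped when the memory reaches $0$ (the memory increases strictly, at rate $v_1(n)$ at site $1$ and $v_0(n)$ at site $0$, so this happens in finite time and the walk stays in $\{0,1\}$ beforehand). Since the walk starts at $1$, we have $G_n=1$ exactly when the total number of jumps performed, $0$ included, is even. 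Hence $p_1(n,\ell)=\Psi(\beta_1(n),\beta_2(n),\ell)$, where $\Psi(b_1,b_2,\ell)$ is the probability of an even number of jumps for the memory-clock process with hazard constants $b_1,b_2$ and starting memory $\ell<0$; the two series of Proposition~\ref{Layer_Reverse}(i) and (ii) are simply two expressions for $\Psi$, off and on the diagonal $b_1=b_2$. Since $(\beta_1(n),\beta_2(n),\ell_n)\to(\gamma_1,\gamma_2,\ell)$, the corollary reduces to the continuity of $\Psi$ at $(\gamma_1,\gamma_2,\ell)\in(0,\infty)\times[0,\infty)\times(-\infty,0)$ along sequences.

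To prove this continuity I would couple, for all parameter triples $(b_1,b_2,\ell)$ in a fixed neighbourhood of $(\gamma_1,\gamma_2,\ell)$, the memory-clock processes on one probability space carrying a single i.i.d.\ sequence $E_1',E_2',\dots$ of mean-one exponentials driving successive jump attempts: with $m_0:=\ell$, the $j$-th jump occurs at the memory level $m_j\in(m_{j-1},0)$ solving $\int_{m_{j-1}}^{m_j}b_\bullet|s|\,ds=E_j'$ if such a level exists ($b_\bullet$ being $b_1$ or $b_2$ according to the site occupied), and otherwise there is no $j$-th jump and the memory reaches $0$. Each of these maps depends continuously on $(b_1,b_2)$ and on $m_{j-1}$. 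Because the memory always lies in $[\ell,0)$, each jump attempt sends the memory to $0$ first with probability at least some $\epsilon_0>0$, uniformly over the neighbourhood, so the number $J$ of jumps obeys $\mathbb P(J\ge k)\le(1-\epsilon_0)^k$ there and $J<\infty$ a.s.; moreover, a.s.\ the finitely many inequalities that determine the value of $J$ are strict, hence stable under small perturbations, so along any sequence of parameters converging to $(\gamma_1,\gamma_2,\ell)$ the value of $J$ is a.s.\ eventually constant. Therefore $\bone_{\{J\text{ is even}\}}$ converges a.s., and bounded convergence yields $\Psi\to\Psi$. The boundary case $\gamma_2=0$ is handled identically: the hazard at site $0$ then tends to $0$, so a.s.\ eventually no jump occurs from site $0$, and one finds $\Psi(\gamma_1,0,\ell)=\exp(-\gamma_1\ell^2/2)$.

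The step I expect to be the real obstacle is the last one, the joint continuity of $\Psi$, in particular across the diagonal $b_1=b_2$ and up to $b_2=0$. It is not available cheaply from Proposition~\ref{Layer_Reverse} by dominated convergence on the off-diagonal series of part~(i): that series has coefficients involving negative powers of $\beta_1(n)-\beta_2(n)$, so its tail admits no bound uniform as $\beta_1(n)-\beta_2(n)\to0$, and working with the process rather than with its series expansion is what makes the argument go through. Alternatively one could check, by a finite Taylor expansion in $b_2-b_1$, that the series of part~(i) converges to that of part~(ii), but this computation is considerably more cumbersome.
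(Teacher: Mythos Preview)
Your argument is correct. The paper takes a different but closely related route: it works with the probabilistic series established inside the proof of Proposition~\ref{Layer_Reverse},
\[
p_1(n,\ell_n)=\sum_{k\ge0}\mathbb{P}\Bigl(Y_k(n)+\beta_1(n)^{-1}E_{k+1}\ge \ell_n^2/2,\ Y_k(n)<\ell_n^2/2\Bigr),
\]
where $Y_k(n)$ is a sum of $2k$ independent exponentials with means $\beta_1(n)^{-1}$ and $\beta_2(n)^{-1}$; it then shows each term converges (termwise weak convergence of the summands under $\bold{F3}$ and $\bold{K}$) and dominates the series by $q^k$ via the one-line bound $\mathbb{P}(Y_k(n)<\ell_n^2/2)\le\bigl(1-e^{-\beta_1(n)\ell_n^2/2}\bigr)^k$. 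So the dominated-convergence strategy you worry about in your final paragraph is exactly what the paper does---but applied to the \emph{probability} series rather than to the closed-form expansion with its $(\beta_1-\beta_2)^{-m}$ coefficients, which dissolves the obstacle you anticipate. Your coupling instead proves a.s.\ eventual constancy of the jump count $J$; the two arguments share the same geometric tail estimate (on $k$, respectively on $J$), but yours handles the diagonal $\gamma_1=\gamma_2$ and the boundary $\gamma_2=0$ uniformly without any case distinction, at the price of a somewhat longer setup.
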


The term ``geometric distribution'' may refer to either of two closely related distributions. In this article, a random variable $R$ will be called geometric with parameter $p$  if $\P(R = j) = (1-p)^{j-1}p$ for $j=1,2,\dots$.

We have the following analogue of Proposition \ref{CondExitVel}.

\begin{proposition}\label{Exit_Velo} 
If $\ell<0$ then $\mc{V}_n(\ell)$ is the distribution of
\begin{align}\label{s24.3}
 \left(2\frac{v_0(n)}{c_0(n)} E +  S_n\right)^{1/2}\bone_{Z=0}+ S_n^{1/2} \; \bone_{Z=1} ,
\end{align}
where
\begin{align}\label{SnL}
S_n = 2\frac{v_1(n)}{c_1(n)}E'+\sum_{k=1}^{J(n) -1} 2\frac{v_1(n)}{b_1(n)}E_j'+\sum_{k=1}^{J(n) -1} 2\frac{v_0(n)}{c_0(n)}E_k''.
\end{align}
The random variables $E$, $E'$, $(E_k')_k$, $(E_k'')_k$ are i.i.d\ exponential with mean 1, $J(n)$ is geometric  with parameter $c_1(n)/(c_1(n)+b_1(n)) $, and $Z$ takes values 0 or 1
and satisfies $\P(Z=1) = p_1(n,\ell)$, where $p_1(n,\ell)$ is given in Proposition \ref{Layer_Reverse}. All of these random variables are assumed to be independent.  
\end{proposition}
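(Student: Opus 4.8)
The plan is to split the trajectory of $(X_n,L_n)$ started from $(N_0(n),\ell)=(1,\ell)$, $\ell<0$, at the time $\mc{T}_n$ at which the memory first reaches $0$, and to analyze the two pieces by the strong Markov property, exactly as in the proof of Proposition \ref{CondExitVel}, the new feature being that a visit to site $1$ no longer forces a jump to $2$. I will use throughout the time-change behind the hard-boundary computation (the display giving $T_1=\sqrt{2E_1}/n$): if the particle occupies a site $j$ with current memory $m_0\ge 0$, the memory grows there at the constant rate $v_j(n)$, and a single jump clock to a fixed neighbour has intensity $\kappa L_n(t)$, then the change of variables $t\mapsto L_n(t)^2$ shows that $L_n^2$ at the jump equals $m_0^2+\tfrac{2v_j(n)}{\kappa}E$ with $E$ mean-one exponential independent of the past; if two such clocks with constants $\kappa_1,\kappa_2$ run in parallel, the particle leaves $j$ after a gain $\min\!\big(\tfrac{2v_j(n)}{\kappa_1}E_1,\tfrac{2v_j(n)}{\kappa_2}E_2\big)$ in $L_n^2$ and, independently, leaves through the first clock with probability $\kappa_1/(\kappa_1+\kappa_2)$.

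Since $v_0(n),v_1(n)>0$, the memory is continuous and strictly increasing on $[0,\mc{T}_n]$, so $L_n(\mc{T}_n)=0$ and $G_n=X_n(\mc{T}_n)\in\{0,1\}$; Proposition \ref{Layer_Reverse} gives $\P(G_n=1)=p_1(n,\ell)$, and I set $Z=\bone_{\{G_n=1\}}$. By the strong Markov property at $\mc{T}_n$, conditionally on $G_n$ the process after $\mc{T}_n$ is a fresh copy started from $(G_n,0)$, and because the increment of $L_n^2$ over any sojourn depends only on the site and its jump clocks and not on the current level of $L_n$, everything reduces to the process started from $(1,0)$: on $\{Z=1\}$ one has $L_n(\mc{U}_n)^2=S_n$, while on $\{Z=0\}$ the particle first sojourns at $0$ (its only jump clock, to $1$, having intensity $c_0(n)L_n$), picking up the extra summand $\tfrac{2v_0(n)}{c_0(n)}E$ before reaching a state $(1,\cdot)$, whence $L_n(\mc{U}_n)^2=\tfrac{2v_0(n)}{c_0(n)}E+S_n$. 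This is the two-term description in the statement, so it only remains to identify the law of $S_n$, the squared memory accumulated by the process started from $(1,0)$ until it leaves $\{0,1\}$.

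For that process, each visit to site $1$ ends with a jump to $2$ (intensity $c_1(n)L_n$) or with a jump back to $0$ (intensity $b_1(n)L_n$); by the competing-clocks statement the escape occurs with probability $c_1(n)/(c_1(n)+b_1(n))$ and this event is independent of the memory gained during the visit, so the number $J(n)$ of visits to $1$ before escaping is geometric with that parameter and independent of all the memory increments, and there are exactly $J(n)-1$ intervening visits to $0$, each contributing $\tfrac{2v_0(n)}{c_0(n)}E_k''$ to $L_n^2$ by the single-clock identity. Recording the squared memory accumulated during the visits to $1$ through the escape clock of the final visit and the return clocks of the $J(n)-1$ bouncing visits, the same identity produces the contributions $\tfrac{2v_1(n)}{c_1(n)}E'$ and $\tfrac{2v_1(n)}{b_1(n)}E_k'$; summing all the pieces and invoking the strong Markov property at each successive jump time for the independence yields the asserted representation of $S_n$, and conditioning on $Z$ then gives $\mc{V}_n(\ell)$. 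I expect the delicate point to be exactly this last step: turning the excursion decomposition into an honest equality in law with the listed variables — in particular $J(n)$ — mutually independent, which requires combining the strong Markov property at the jump times with the memorylessness of the exponential clocks (and the elementary fact, used also for Proposition \ref{CondExitVel}, that a geometric sum of i.i.d.\ exponentials is again exponential).
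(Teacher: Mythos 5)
Your overall strategy is the one the paper's (sketched) proof intends: split at $\mc{T}_n$, use Proposition \ref{Layer_Reverse} to get the law of $Z=\bone_{\{G_n=1\}}$, apply the strong Markov property to reduce to the process started from $(G_n,0)$, and decompose the post-$\mc{T}_n$ excursion into a geometric number of visits to site $1$ separated by visits to site $0$, with the geometric parameter coming from Lemma \ref{AgainstFlow}. All of that is sound, as is your observation that the increments of $L_n^2$ per sojourn do not depend on the current memory level.

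The gap is in the last step, exactly where you anticipated trouble: the assignment of the site-$1$ contributions. With two competing clocks at site $1$ (rates $c_1(n)\ell$ to site $2$ and $b_1(n)\ell$ to site $0$), the increment of $L_n^2$ accumulated during a single sojourn is the \emph{minimum} $\min\bigl(\tfrac{2v_1(n)}{c_1(n)}E^{(2)},\tfrac{2v_1(n)}{b_1(n)}E^{(0)}\bigr)$, which is exponential with mean $\tfrac{2v_1(n)}{c_1(n)+b_1(n)}$; and, as you yourself correctly state one sentence earlier, this increment is \emph{independent} of which clock wins. Consequently, conditioning a sojourn on ending with a jump to $2$ (resp.\ to $0$) does not turn its $L_n^2$-increment into $\tfrac{2v_1(n)}{c_1(n)}E'$ (resp.\ $\tfrac{2v_1(n)}{b_1(n)}E_k'$); it remains $\tfrac{2v_1(n)}{c_1(n)+b_1(n)}$ times a mean-one exponential. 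The ``single-clock identity'' describes the unconditional ring time of one clock, not the sojourn length conditioned on the exit direction, so the sentence ``the same identity produces the contributions $\tfrac{2v_1(n)}{c_1(n)}E'$ and $\tfrac{2v_1(n)}{b_1(n)}E_k'$'' contradicts the independence you just invoked. The honest excursion decomposition gives
\begin{align*}
S_n \;\d\; \sum_{k=1}^{J(n)}\frac{2v_1(n)}{c_1(n)+b_1(n)}\widetilde E_k \;+\; \sum_{k=1}^{J(n)-1}\frac{2v_0(n)}{c_0(n)}E_k'' ,
\end{align*}
with $J(n)$ geometric of parameter $c_1(n)/(c_1(n)+b_1(n))$ independent of all the $\widetilde E_k, E_k''$. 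This is \emph{not} equal in law to \eqref{SnL} with all listed variables independent: writing $a=c_1/(2v_1)$, $b=b_1/(2v_1)$, $c=c_0/(2v_0)$ and comparing Laplace transforms at $a=b=c=s=1$ gives $2/5$ for the display above versus $2/7$ for \eqref{SnL} (only the \emph{marginal} of the site-$1$ total agrees with $\tfrac{2v_1}{c_1}E'$, by the geometric-sum-of-exponentials identity; the joint law with $J(n)$, which matters because $J(n)$ also indexes the site-$0$ sum, does not). So this step cannot be patched as written; either the argument must be redone with the $\tfrac{2v_1(n)}{c_1(n)+b_1(n)}$ increments, or the target formula itself has to be adjusted accordingly.
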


\begin{theorem}\label{Cvge_Veloc}
Assume  $\bold{F3}$ and $\bold{K}$. Suppose that $\ell_n < 0$ for $n\geq 1$ and  $\lim_{n\rightarrow \infty}\ell_n =\ell < 0$.
 Then there exist constants $\gamma_0,\gamma_1,\gamma_2 \in (0,+\infty)$ and $s\in[0,1)$ such that
\begin{align}\label{o5.1}
\gamma_0 &= \lim_{n\rightarrow\infty}\frac{2v_0(n)}{c_0(n)}, \qquad
\gamma_1 = \lim_{n\rightarrow\infty}\frac{2v_1(n)}{c_1(n)},
\qquad \gamma_2 = \lim_{n\rightarrow\infty}\frac{2v_1(n)}{b_1(n)} ,\\
 \gamma_3&= \lim_{n\rightarrow\infty}\frac{c_1(n)}{c_1(n)+b_1(n)}. \label{o5.2}
\end{align}
Moreover, distributions $\mc{V}_n(\ell_n)$ converge to the distribution of
\begin{align}\label{o5.8}
\sqrt{\gamma_0 E + S}\;\bone_{Z(\ell)=0}+ \sqrt{S}\;\bone_{Z(\ell)=1},
\end{align}
where
\begin{align}\label{o5.7}
S = \gamma_1 E' + \sum_{j=1}^{J-1}\left(\gamma_0 E_j' + \gamma_2 E_j''\right).
\end{align}
The random variables $E$, $E'$, $(E_k')_k$, $(E_k'')_k$ are i.i.d. exponential with mean 1.
The distribution of 
$Z(\ell)\in \{0,1\}$ is determined by $\mathbb{P}(Z(\ell)=1)= p_1(\ell)$, with $p_1(\ell)$ defined in Corollary \ref{Limit_Layer_reverse}. The
random variable $J$ is  geometric  with parameter $\gamma_3$.
All  these random variables are independent.  
\end{theorem}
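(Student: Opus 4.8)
The plan is to start from the exact distributional identity for $\mc{V}_n(\ell)$ in Proposition \ref{Exit_Velo} and pass to the limit $n\to\infty$ inside it. Since every object appearing there is assembled from an independent family of mean-one exponentials together with one geometric variable $J(n)$ and one Bernoulli variable $Z$, the proof reduces to: (a) verifying that the deterministic coefficients converge to $\gamma_0,\gamma_1,\gamma_2$ and the geometric parameter to $\gamma_3$; (b) upgrading this to $S_n\Rightarrow S$; and (c) assembling everything via independence, Slutsky, and an extended continuous-mapping argument, using $p_1(n,\ell_n)\to p_1(\ell)$ from Corollary \ref{Limit_Layer_reverse}.

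For step (a) I would first extract from the noisy-boundary balance equations, specialized to $N_0(n)=1$, the identities $c_1(n)=n$, $v_0(n)+v_1(n)=n$ and $c_0(n)=v_0(n)+b_1(n)$. Dividing by $n$ and invoking $\bold{F3}$ ($v_i(n)/n\to\beta_i$ with $\beta_0+\beta_1=1$, $\beta_0,\beta_1>0$) and $\bold{K}$ ($b_1(n)/n\to\vartheta_1$) gives $c_0(n)/n\to\beta_0+\vartheta_1$, hence
\[
\frac{2v_0(n)}{c_0(n)}\to\frac{2\beta_0}{\beta_0+\vartheta_1}=:\gamma_0,\quad
\frac{2v_1(n)}{c_1(n)}\to 2\beta_1=:\gamma_1,\quad
\frac{2v_1(n)}{b_1(n)}\to\frac{2\beta_1}{\vartheta_1}=:\gamma_2,\quad
\frac{c_1(n)}{c_1(n)+b_1(n)}\to\frac1{1+\vartheta_1}=:\gamma_3 .
\]
All four limits lie in $(0,\infty)$, with $\gamma_3\in(0,1)$, in the genuinely noisy regime $\vartheta_1>0$ to which the statement refers (if $\vartheta_1=0$ the layer is asymptotically noiseless and the conclusion collapses to Theorem \ref{ThinBoundaries} (iii)). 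Since $J(n)$ is geometric with parameter $c_1(n)/(c_1(n)+b_1(n))\to\gamma_3\in(0,1)$ and geometric laws depend continuously on the parameter on $(0,1)$, we get $J(n)\Rightarrow J$ with $J$ geometric with parameter $\gamma_3$; moreover $\P(J(n)\ge M)=(1-p_n)^{M-1}\le(1-\gamma_3+\delta)^{M-1}$ for all large $n$ and suitable small $\delta>0$, so $\{J(n)\}$ is tight with geometric tails uniform in $n$.

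For step (b), conditionally on $\{J(n)=j\}$ the variable $S_n$ is the finite independent sum $\tfrac{2v_1(n)}{c_1(n)}E'+\sum_{k=1}^{j-1}(\tfrac{2v_1(n)}{b_1(n)}E_k'+\tfrac{2v_0(n)}{c_0(n)}E_k'')$ of scaled exponentials. I would run a truncation argument: for fixed $M$, on $\{J(n)<M\}$ the variable $S_n$ is a single continuous function, with coefficients converging to $(\gamma_1,\gamma_2,\gamma_0)$, of the tuple $(J(n),E',E_1',\dots,E_{M-1}',E_1'',\dots,E_{M-1}'')$, and this tuple converges jointly in law to the same function evaluated at $(J,E',\dots)$ by independence and step (a); letting $M\to\infty$ and using the uniform geometric tails from step (a) upgrades this to $S_n\Rightarrow S$ with $S=\gamma_1E'+\sum_{j=1}^{J-1}(\gamma_2E_j'+\gamma_0E_j'')$, which is \eqref{o5.7} up to the immaterial relabeling $E_j'\leftrightarrow E_j''$. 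Finally, for step (c): by Proposition \ref{Exit_Velo}, $\mc{V}_n(\ell_n)=\mc{L}(g_n(E,S_n,Z_n))$ with $g_n(e,s,z)=(\tfrac{2v_0(n)}{c_0(n)}e+s)^{1/2}\bone_{z=0}+s^{1/2}\bone_{z=1}$, where $E$ is a fixed mean-one exponential and $Z_n\in\{0,1\}$ has $\P(Z_n=1)=p_1(n,\ell_n)$, all three independent. Corollary \ref{Limit_Layer_reverse} gives $Z_n\Rightarrow Z(\ell)$, hence $(E,S_n,Z_n)\Rightarrow(E,S,Z(\ell))$ since weak convergence of product measures follows from that of the marginals; the limit lies a.s.\ in $(0,\infty)\times(0,\infty)\times\{0,1\}$, on which $g_n\to g$ uniformly on compacts with $g$ equal to $g_n$ but with $\tfrac{2v_0(n)}{c_0(n)}$ replaced by $\gamma_0$, so the extended continuous mapping theorem yields $g_n(E,S_n,Z_n)\Rightarrow g(E,S,Z(\ell))$, which is precisely \eqref{o5.8}. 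The main obstacle is step (b): $S_n$ carries both a random number of summands and $n$-dependent coefficients, one of them ($2v_1(n)/b_1(n)$) controlled only through $\bold{K}$, so the tightness-with-uniform-tails of $J(n)$ must be married carefully to the convergence of the coefficients; the remaining steps are routine bookkeeping with independence, Slutsky, and continuous mapping.
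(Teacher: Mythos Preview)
Your proposal is correct and follows essentially the same approach as the paper: start from the explicit mixture representation of $\mc{V}_n(\ell)$ in Proposition \ref{Exit_Velo}, use $\bold{F3}$ and $\bold{K}$ to obtain convergence of the deterministic coefficients and of the parameters of $J(n)$ and $Z$, and pass the convergence through the mixture. The paper's proof is considerably more terse---it simply asserts that convergence of the components and of the mixing laws yields convergence of the mixture---whereas you spell out the truncation-in-$M$ argument for $S_n\Rightarrow S$, the uniform geometric tails of $J(n)$, and the extended continuous-mapping step; you also correctly flag the $E_j'\leftrightarrow E_j''$ relabeling between \eqref{SnL} and \eqref{o5.7} and the degenerate case $\vartheta_1=0$ (where $\gamma_2$ would blow up), points the paper glosses over.
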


\subsection{Convergence to the billiard process}\label{Sect3}
We will prove, under appropriate assumptions, that the sequence $(X_n/n, L_n)$ converges in distribution to a billiard process with Markovian reflections. Let
\begin{align}\label{o5.10}
t_0(n) & = 0,\\
s_j(n) & = \inf\{t\geq t_j(n): X_n(t) \in \prt \calD_n^- \cup \prt \calD_n^+\} ,\quad j\geq 0,\label{o5.11}\\
t_{j+1}(n) & = \inf\{t\geq s_j(n): X_n(t) \in \calD_n \setminus( \prt \calD_n^- \cup \prt \calD_n^+)\}, \quad j\geq 0.\label{o5.12}
\end{align}
These are successive times when the process $X_n$ enters or leaves the boundaries. 

\begin{proposition}\label{timeOnBoundary} 
 Suppose that the distributions of $ L_n(0)$, $n\geq 1$, are tight.
Make one of the following assumptions.

(i) Consider the noiseless model.
If $N_0(n) = N < \infty$ for all $n$,
 suppose that  $\bold{F1}$-$\bold{F2}$ or $\bold{F'}$, and $\bold{F3}$ hold. If $\lim_{n\rightarrow} N_0(n)=\infty$, assume instead  $\bold{G1}$-$\bold{G2}$. 

(ii) Consider the noisy model and suppose that  $\bold{F1}$-$\bold{F2}$ or $\bold{F'}$, $\bold{F3}$, and $\bold{K}$ hold. Recall that $N_0(n)=1$ for all $n$. 

Then for every $j\geq 0$,  
\begin{align*}
 \lim_{n\rightarrow\infty} t_{j+1}(n)-s_{j}(n) =0,\qquad\text{  in distribution.} 
\end{align*}

\end{proposition}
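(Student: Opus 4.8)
The plan is to reduce the claim to the analysis of a single sojourn of $X_n$ in one boundary region, and then to bound the duration of that sojourn by the total displacement of the memory $L_n$ during it divided by the rate at which $L_n$ moves. Fix $j\ge 0$. Since $v_i(n)=0$ off $\prt\calD_n^-\cup\prt\calD_n^+$, the memory is frozen on $[t_j(n),s_j(n)]$, so $L_n(s_j(n))=L_n(t_j(n))$; and because $X_n$ is a nearest‑neighbour walk, $X_n(s_j(n))$ is the inner edge $N_0(n)$ or $n-N_1(n)$ of the boundary region it enters. Treating the lower boundary (the upper one is symmetric), the transition rates forbid $X_n$ from leaving $\prt\calD_n^-$ while $L_n<0$, so by the strong Markov property at $s_j(n)$ the duration $t_{j+1}(n)-s_j(n)$ has the law of the exit time $\mc U_n$ of \eqref{s30.23} started from $X_n(0)=N_0(n)$, $L_n(0)=L_n(s_j(n))<0$; in the noiseless case this is a single excursion because after leaving $\prt\calD_n^-$ the memory has the sign that carries $X_n$ ballistically across the interior, and in the noisy model $N_0(n)=1$. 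Finally, an induction on $j$ shows the laws of $L_n(s_j(n))$ are tight: $L_n(s_j(n))$ is the exit velocity of the previous sojourn, whose distribution is $\mc V_n(\cdot)$ or $\mc V_n^+(\cdot)$ at $L_n(s_{j-1}(n))$, and these families converge by Theorems~\ref{ThinBoundaries}, \ref{Infinite_layers}, \ref{Cvge_Veloc}, with base case $L_n(t_0(n))=L_n(0)$ tight by hypothesis. Hence it suffices to show: if $\ell_n<0$ and $(\ell_n)$ is tight, then $\mc U_n\to 0$ in probability under $\P(\,\cdot\mid X_n(0)=N_0(n),L_n(0)=\ell_n)$.

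Throughout the sojourn $L_n$ is strictly increasing with derivative $v_{X_n(t)}(n)$, so
\[
\mc U_n=\int_{\ell_n}^{L_n(\mc U_n)}\frac{dm}{v_{i(m)}(n)},
\]
where $i(m)$ is the site occupied when $L_n=m$, and the total displacement $|\ell_n|+L_n(\mc U_n)$ is tight. I would also record the uniform bound $c_i(n)\ge\delta n$ for all $i\le N_0(n)$ and all large $n$: from \eqref{s22.1} one has $c_{i-1}(n)/c_i(n)=1-\lambda_i(n)$ and $c_{N_0(n)}(n)=n$, and $\prod_{i\ge1}(1-\lambda_i)>0$ since $\sum_i\lambda_i<\infty$ under $\bold{G2}$ (the bound is trivial under $\bold{F3}$). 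In the bounded‑layer cases --- case~(i) with $N_0(n)\equiv N$, and case~(ii), where $N_0(n)\equiv 1$ --- assumption $\bold{F3}$ gives $v_i(n)/n\to\beta_i>0$, hence $v_i(n)\ge\delta' n$ for all $i$ and large $n$, so $\mc U_n\le(|\ell_n|+L_n(\mc U_n))/(\delta' n)\to 0$ in probability. This is the easy half.

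For $N_0(n)\to\infty$ under $\bold{G1}$–$\bold{G2}$ the difficulty is that $v_i(n)$ can be $o(n)$ near the top of the layer, so $L_n$ may move slowly there. Fix $\eta>0$ and write $\mc U_n=A_n(\eta)+B_n(\eta)$, the two pieces of the integral above over $\{|m|\ge\eta\}$ and $\{|m|<\eta\}$. For $A_n(\eta)$: while $|L_n|\ge\eta$ the jump rate of $X_n$ is at least $\delta n\eta$, and (since $X_n$ is monotone on the descent and ascent phases) at most $2N_0(n)+2$ holding times occur while $|L_n|\ge\eta$, each stochastically dominated by an exponential of rate $\delta n\eta$; hence $\E[A_n(\eta)]\le(2N_0(n)+2)/(\delta n\eta)+O(1/n)\to 0$ for fixed $\eta$, by Assumption~\ref{o8.1}(ii). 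For $B_n(\eta)$: the memory moves by at most $2\eta$, and the plan is to show that, off an event of small probability, while $|L_n|<\eta$ the walk visits only a fixed finite block $\{0,\dots,K\}$, on which $v_i(n)=\mu_{i-1}(n)c_{i-1}(n)\ge\tfrac12\mu_{i-1}\delta n$ for large $n$ (using $\mu_{i-1}>0$ for finite $i-1$), so that $B_n(\eta)\le 2\eta/(\delta'' n)$. The confinement follows from the exponential holding‑time structure of the descent (the mechanism behind Proposition~\ref{LevelDistrib} and \eqref{s30.1}): setting $\sigma_i=|L_n|$ at level $i$, one has $\sigma_{i-1}^2=(\sigma_i^2-W_i)_+$ with $W_i$ exponential of mean $2\mu_{i-1}(n)$, so $\sigma_i^2\ge\ell_n^2-\sum_{k>i}W_k$ and $\P(\exists\,i>K:\sigma_i<\eta)\le 2\sum_{k>K}\mu'_{k-1}(n)/(\ell_n^2-\eta^2)$ on $\{\ell_n^2>2\eta^2\}$, which is small for $K$ large by the $\ell^1$ convergence in $\bold{G1}$, uniformly in large $n$.

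The step I expect to be the main obstacle is controlling the two exceptional events left over in the case $N_0(n)\to\infty$: that the incoming speed $|\ell_n|$ is small, and that the memory first vanishes at a high level $G_n$, where the ensuing ascent starts where $v_i(n)$ is tiny. I plan to dispose of both by showing that on these events the relevant exit velocity, hence the relevant memory displacement, hence the dwell time weighted by the probability of the event, is itself small: e.g.\ conditionally on $\{G_n=N_0(n)\}$ the one‑step exit velocity at level $N_0(n)$ is $\sqrt{v_{N_0(n)}(n)/n}$ times a standard Rayleigh variable, while $\P(G_n=N_0(n))=\exp(-\ell_n^2/(2\mu_{N_0(n)-1}(n)))$, so the product of the dwell time and $\mathbf 1_{\{G_n=N_0(n)\}}$ has expectation $o(1)$; iterating the $\sigma$‑recursion handles the remaining high‑level contributions, and the small‑$|\ell_n|$ event is negligible because the subsequential limits of $|\ell_n|$ put no mass at $0$ (the exit‑velocity densities of the previous sojourn, written out via Proposition~\ref{CondExitVel}, being bounded near $0$ uniformly in $n$). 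Granting these estimates, for each $\epsilon>0$ I would fix $\eta$ so small that $\limsup_n\P(B_n(\eta)>\epsilon/2)<\epsilon$ and then send $n\to\infty$ in the bound for $A_n(\eta)$, obtaining $\P(\mc U_n>\epsilon)\to 0$, as required.
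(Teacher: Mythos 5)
Your reduction to a single boundary sojourn, the induction giving tightness of $L_n(s_j(n))$ via the convergence of $\mc V_n$, $\mc V_n^+$, and the final appeal to Assumption \ref{o8.1}(ii) all coincide with the paper. Where you genuinely diverge is in how the sojourn time is bounded. The paper works site by site: solving the quadratic for each holding time, it obtains $\Delta u_i^-\le\bigl(2E_i/(v_i(n)c_{i-1}(n))\bigr)^{1/2}$ and $\Delta u_i^+\le\bigl(2E_i'/(v_i(n)c_{i}(n))\bigr)^{1/2}$ plus a separate term $\vert\ell\vert/v_0(n)$ at site $0$, sums over $i\le N_0(n)$, and finishes with the law of large numbers. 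You instead integrate $dm/v_{i(m)}(n)$ over the memory displacement. In the bounded-layer cases (noiseless with $N_0(n)\equiv N$, and the noisy model) your version is correct and arguably shorter, since $\bold{F3}$ gives $v_i(n)\ge \delta' n$ directly and the total displacement is tight by Theorems \ref{ThinBoundaries} and \ref{Cvge_Veloc}. For $N_0(n)\to\infty$ the two routes really part ways: the paper asserts a single constant $C$ making all the coefficients $O(1/n)$, while you observe that $v_i(n)\le \lambda_i(n)\,n$ with $\lambda_i\to 0$ prevents any such uniform bound and forces the split at $\vert L_n\vert=\eta$ together with confinement of the slow part of the excursion to a finite block. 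That difficulty is real, so the extra machinery you build there is responding to something the paper's one-line constant glosses over; this is the main payoff of your approach.

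The place where your write-up is still only a plan is exactly the infinite-layer case. The $\sigma$-recursion controls confinement only on the \emph{descent}: on the ascent, starting from $G_n$, the walk can in principle traverse many sites while $L_n^2=2\sum\lambda_i(n)E_i$ is still below $\eta^2$, so the claim that the walk stays in $\{0,\dots,K\}$ while $\vert L_n\vert<\eta$ needs a separate estimate on the upward leg (your last paragraph gestures at this via the smallness of the exit velocity from high levels, but the estimate is not carried out). You also quietly use that $\mu_{i}>0$ and $\lambda_i>0$ for every finite $i$, and that $\prod_{i\ge1}(1-\lambda_i)>0$, none of which is literally guaranteed by $\bold{G1}$--$\bold{G2}$; these should either be proved or added as hypotheses. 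These residual points are of the same kind as what the paper itself leaves implicit, but since you flag them as the main obstacle you should supply the estimates before regarding the $N_0(n)\to\infty$ case as closed.
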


\begin{definition}
Recall definitions \eqref{o5.3}-\eqref{o5.4} of $\mc{V}_n(\ell)$ and $\mc{V}^+_n(\ell)$. 
 Let
\begin{align}\label{s25.1}
\mc{V}_\infty(\ell) &= \lim_{n \to \infty} \mc{V}_n(\ell),\qquad \ell<0,\\
\mc{V}^+_\infty(\ell)& = \lim_{n \to \infty} \mc{V}^+_n(\ell),\qquad \ell>0,
\label{o5.5}
\end{align}
if the limits exist.
\end{definition}

For assumptions under which the  limit in \eqref{s25.1} exists, see Theorems \ref{ThinBoundaries},
\ref{Infinite_layers} and \ref{Cvge_Veloc}. Analogous results hold for the second limit, by symmetry. The distributions $\mc{V}_\infty(\ell)$ and $\mc{V}^+_\infty(\ell)$ (if the limits in \eqref{s25.1}-\eqref{o5.5} exist) give no mass to 0.

\begin{remark}
It follows from Theorem \ref{ThinBoundaries} that every distribution given in Definition \ref{def:vel} can  be expressed as the limit  for a sequence  $(X_n, L_n)$.
\end{remark}

Suppose that the limits  in \eqref{s25.1}-\eqref{o5.5} exist for every $\ell$ in the corresponding range.
For any $(x_0, \ell_0) \in (0,1)\times \R\setminus\{0\}$, we will define a billiard process $(X,L)$ with Markovian reflections starting from $(x_0, \ell_0)$.

Consider $\ell_0<0$. The case $\ell_0>0$ can be treated in an analogous way.
First we form a Markov chain $\{R_j, j\geq 0\}$ by setting $R_0 = \ell_0$ and giving it the Markovian transition mechanism  
\begin{align}\label{s26.1}
\mc{L} (R_{2j+1} \mid R_{2j} = \ell, R_{2j-1}, \dots, R_0) &= \mc{V}_\infty(\ell), \qquad j\geq 0,\\
\mc{L} (R_{2j+2} \mid R_{2j+1} = \ell, R_{2j}, \dots, R_0) &= \mc{V}^+_\infty(\ell), \qquad j\geq 0.
\label{o5.6}
\end{align}
 We define
the process $(X,L)$ by
\begin{align}\label{s26.2}
u_0&=0,\\
W_0(t) &= x_0+R_0 t, \quad t \geq 0,\\
u_{j+1} &= \inf\{t> u_j: W_{j}(t) \notin (0,1)\}, \quad j \geq 0,\\
W_j(t) & = W_{j-1}(u_j) + R_j (t - u_j), \quad t\geq u_j, j \geq 1,\\
X(t) & = W_j(t), \quad [u_j, u_{j+1}), \quad j\geq 0,\\
L(t) &= R_j,  \quad [u_j, u_{j+1}),\quad j\geq 0.\label{s26.3}
\end{align}
Note that $u_j > u_{j-1}$ for all $j\geq 1$, a.s., because  distributions $\mc{V}_\infty(\ell)$ and $\mc{V}^+_\infty(\ell)$ give no mass to 0 and, therefore, $R_j \ne 0$ for all $j\geq 0$, a.s.

We have constructed a billiard process $(X, L)$ with Markovian reflections on $ [0, \sup_j u_j)$. 

\begin{theorem}\label{convProc}
Assume that $(X_n(0)/n, L_n(0))$ converge in distribution to a pair of random variables $(X(0), L(0))$, as $n\to \infty$. Suppose that $X(0)\in(0,1)$ and $L(0) \ne 0$, a.s.

Make one of the following assumptions.

(i) Consider the noiseless model.
Assume that $N_0(n) = N < \infty$ for all $n$, and
 suppose that  $\bold{F1}$-$\bold{F2}$ or $\bold{F'}$, and $\bold{F3}$ hold.

(ii) Consider the noiseless model.
Assume that $\lim_{n\rightarrow} N_0(n)=\infty$, and suppose that $\bold{G1}$-$\bold{G2}$ hold.

(iii) Assume the noisy model and suppose that  $\bold{F1}$-$\bold{F2}$ or $\bold{F'}$, $\bold{F3}$, and $\bold{K}$ hold. Recall that $N_0(n)=1$ for all $n$.

 Then 

(a) $\sup_j u_j = \infty$, a.s. It follows that $(X,L)$ is defined on $[0,\infty)$.

(b)
$\{(X_n(t)/n, L_n(t)) : t \in [0, T]\}$ converges in distribution
to a billiard process with Markovian reflections $\{(X(t), L(t)) : t\in [0, T]\}$ as $n \to \infty$, for every fixed $T <\infty$. 
The distribution of $(X,L)$ is determined by \eqref{s25.1}-\eqref{s26.3}.

\end{theorem}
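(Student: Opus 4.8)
The plan is to establish Theorem \ref{convProc} by combining three ingredients: tightness of the rescaled processes, identification of subsequential limits via the boundary-layer results already proved, and a renewal-type argument to rule out explosion. The overall structure follows the usual ``tightness + identification of finite-dimensional limits via the strong Markov property'' scheme for processes built from an embedded Markov chain.

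\textbf{Step 1: Reduction to the excursion structure.} First I would observe that, by Assumption \ref{o8.1} (iii) and the choice $c_i(n)=n$ in \eqref{o8.2} (resp.\ \eqref{o3.1}--\eqref{o3.2} in the noisy case), outside the boundary layers the memory $L_n$ is frozen and the drift $v_j(n)$ vanishes, so between the times $t_j(n)$ and $s_j(n)$ defined in \eqref{o5.10}--\eqref{o5.12} the process $X_n/n$ moves as a nearest-neighbor random walk with transition rates $n|\ell|$ in the direction $\sgn(\ell)$ only. A direct computation (the interior is a biased walk with rates scaling like $n$ over $n$ sites) shows that, conditionally on the value $L_n(t_j(n))=\ell$, the crossing time $s_j(n)-t_j(n)$ converges in distribution to $1/|\ell|$ times the distance to the appropriate endpoint, and the path $X_n/n$ on $[t_j(n),s_j(n)]$ converges uniformly to the straight segment $t\mapsto X_n(t_j(n))/n + \ell(t-t_j(n))$. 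Meanwhile Proposition \ref{timeOnBoundary} tells us $t_{j+1}(n)-s_j(n)\to 0$ in distribution, so the time spent in the boundary layers is asymptotically negligible, and by Theorems \ref{ThinBoundaries}, \ref{Infinite_layers} or \ref{Cvge_Veloc} (depending on which of (i), (ii), (iii) holds) the exit velocity $L_n(t_{j+1}(n))$, conditioned on $L_n(s_j(n))=\ell_n\to\ell$, converges in distribution to $\mc{V}_\infty(\ell)$ (resp.\ $\mc{V}^+_\infty(\ell)$).

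\textbf{Step 2: Induction on reflections and identification of the limit.} I would then argue by induction on $j$ that the random vector $\big(X_n(t_j(n))/n,\, L_n(t_j(n)),\, t_j(n)\big)$ converges in distribution to $\big(W_{j-1}(u_j),\, R_j,\, u_j\big)$ from \eqref{s26.1}--\eqref{s26.3}, jointly with convergence of the path of $X_n/n$ on $[0,t_j(n)]$ to the path of $X$ on $[0,u_j]$. The base case $j=0$ is the hypothesis on initial conditions. For the inductive step, condition on $L_n(t_j(n))$; use the Step 1 description of the interior crossing to get convergence of $(s_j(n), X_n(s_j(n))/n)$ and the frozen value $L_n(s_j(n))=L_n(t_j(n))$; then apply the strong Markov property at $s_j(n)$ together with the appropriate boundary-layer convergence theorem to obtain the conditional law of $L_n(t_{j+1}(n))$, which matches \eqref{s26.1} resp.\ \eqref{o5.6}; finally use Proposition \ref{timeOnBoundary} to identify $t_{j+1}(n)$ with $s_j(n)$ in the limit. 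A continuous-mapping / Skorokhod-coupling argument assembles these into convergence of the piecewise-linear paths on $[0, u_{j+1}]$. One technical point here: the limiting map $\ell\mapsto \mc{V}_\infty(\ell)$ must be shown continuous (weakly) in $\ell<0$ so that the conditioning on the random, only-approximately-equal value $L_n(s_j(n))$ passes to the limit — this follows from the explicit formulas in Definition \ref{def:vel} / Theorems \ref{Cvge_Veloc}, \ref{Infinite_layers}, which are manifestly continuous in $\ell$.

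\textbf{Step 3: No explosion, and passage to fixed-$T$ convergence.} To get (a), I would show $\sup_j u_j=\infty$ a.s. The increments $u_{j+1}-u_j$ are, conditionally on $R_j$, exactly the time for a straight segment of slope $R_j$ to cross a subinterval of $[0,1]$, hence at least $\text{(something)}/|R_j|$; the issue is whether $|R_j|\to\infty$ fast enough to make $\sum_j 1/|R_j|$ finite. Since $(R_j)$ is a Markov chain with the stationary distribution $\calN(0,1)$ (this is where Theorem \ref{Theorem:BurdzyWhite} and the stationarity built into the construction enter — the chain $|R_j|$ is positive recurrent with the Rayleigh-type stationary law on $\R\setminus\{0\}$), the chain returns infinitely often to, say, $|R_j|\le K$ for fixed $K$, and on each such return $u_{j+1}-u_j$ is bounded below by a fixed positive constant times the (order-one) crossing length; hence $\sup_j u_j=\infty$ a.s. Then for fixed $T$, only finitely many reflections occur in $[0,T]$ a.s. in the limit, and finitely many $t_j(n)\le T$ with overwhelming probability for large $n$, so Step 2 applied up to the (random, tight) number of reflections before $T$, together with tightness of $\{(X_n(\cdot)/n, L_n(\cdot))\}$ on $[0,T]$ in the Skorokhod topology (which follows from the piecewise-linear structure, Proposition \ref{timeOnBoundary}, and tightness of the $R_j$'s), yields convergence in distribution on $[0,T]$; the limit is $(X,L)$ by Step 2 and the fact that a continuous limit identified on a dense set of reflection times is identified everywhere.

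\textbf{Main obstacle.} I expect the hardest part to be Step 3, specifically ruling out explosion: one needs quantitative control on how often $|R_j|$ is of moderate size, which requires knowing the stationary (or at least a Lyapunov-type) behavior of the velocity chain $(R_j)$ — the construction guarantees $\calN(0,1)$ is stationary but for an \emph{arbitrary} starting point one must invoke positive recurrence of $(R_j)$, which in turn should be read off from the explicit reflection kernels $\mc{V}_\infty$ (e.g.\ a drift estimate showing $\E[R_{j+1}^2\mid R_j=\ell]$ does not blow up, combined with a uniform lower bound on $\P(|R_{j+1}|\le K\mid R_j=\ell)$). A secondary but real difficulty is the joint convergence in Step 2: one must handle the conditioning on a random, not-exactly-equal velocity value, which is why the weak continuity of $\ell\mapsto\mc{V}_\infty(\ell)$ is needed, and one should be careful that the Skorokhod coupling across countably many reflection steps is set up consistently (e.g.\ via a diagonal argument or by working on the finite number of reflections before a fixed $T$).
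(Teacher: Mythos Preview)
Your Steps 1 and 2 track the paper's argument closely: the interior crossing via a law-of-large-numbers limit, the boundary layer via Proposition \ref{timeOnBoundary} and Theorems \ref{ThinBoundaries}/\ref{Infinite_layers}/\ref{Cvge_Veloc}, and induction on reflection times via the strong Markov property are exactly what the paper does. Your observation that one needs continuity of $\ell\mapsto\mc{V}_\infty(\ell)$ is correct; in the paper this is built into the statements of those theorems (they are formulated with $\ell_n\to\ell$).

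Where you diverge is in Step 3, and there you are making life much harder than necessary, and in a way that flirts with circularity. You propose to rule out explosion by invoking positive recurrence of the velocity chain $(R_j)$, appealing to the stationary distribution $\calN(0,1)$. But the stationarity of $\calN(0,1)$ for the \emph{limiting} process is established only in Theorem \ref{mainTheorem}, which itself relies on Theorem \ref{convProc}; so you cannot use it here. Your fallback of reading positive recurrence directly off the kernels $\mc{V}_\infty$ via Lyapunov-type estimates would work in principle, but it is overkill.

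The paper's argument is far simpler and entirely elementary. The key structural fact you are missing is that the reflection kernels are \emph{uniformly stochastically bounded above, independently of the incoming velocity} $\ell$: from \eqref{s16.3} (or \eqref{s24.5}), whatever the value of $Z(\ell)$, the outgoing speed is dominated by $\bigl(2\sum_{i=0}^{N_0(\infty)}\lambda_iE_i\bigr)^{1/2}$, which is an a.s.\ finite random variable whose law does not involve $\ell$ at all (and likewise in the noisy case, by \eqref{o5.7}--\eqref{o5.8}). Hence one can couple $|R_j|\le A_j$ with $(A_j)_{j\ge1}$ i.i.d.\ and finite, giving $u_k\ge\sum_{j=1}^{k-1}1/A_j\to\infty$ by the strong law of large numbers. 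No recurrence, no stationarity, no Lyapunov function needed.
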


\section{Proofs}\label{Sect4}

\begin{remark}\label{o4.1}
We will use the following results  from \cite{MA,JK}. 

(i) Suppose that $E_1, E_2, \dots, E_k$ are i.i.d. exponential random variables with mean 1 and consider $ \alpha _j\in(0,\infty)$, $j=1,\dots, k$, such that $\alpha_i\ne \alpha_j$ for $i\ne j$.
Then, according to \cite[Thm. 2.1]{MA}, the density of $\sum_{j=1}^k \alpha_j E_j $ is equal to
\begin{align}\label{s27.1}
f(r) = \left(\prod_{m=1}^{k}\frac1{\alpha_m} \right)
\sum_{j=1}^k \left(\exp(-r / \alpha_j) 
\prod_{\substack{i=1\\i \ne j}}^{k}
\frac1{1/\alpha_i -1/ \alpha_j}
\right).
\end{align}

(ii) Suppose that $z_1, z_2, \dots , z_m$ are distinct complex numbers and $z\ne z_j$ for all $j$. Then, according to \cite[(2.4)]{MA},
\begin{align}\label{o1.1}
\frac 1 {\prod_{j=1}^m (z_j-z)} = \sum_{i=1}^m \ds\frac 1 {(z_i-z)\ds\prod_{\substack{j=1\\j \ne i}}^m (z_j-z_i)}.
\end{align}

(iii) Consider the sum $S$ of $k_1+k_2 + \dots+ k_r$ independent random variables with exponential distributions. 
Suppose that exactly $k_j$ of these 
random variables have mean $\alpha_j$, for $j=1,\dots, r$. Assume that $\alpha_i\ne \alpha_j$ for $i\ne j$. According to \cite[Thm. 1]{JK} (see also \cite[Thm. 4.1]{MA}), the density $f_S(u)$ of $S$ is given by the following formula, for $u>0$,
\begin{align*}
\sum_{i=1}^r \frac 1{\alpha_i^{k_i}}
 \exp\left(-u / \alpha_i\right)
\sum_{j=1}^{k_i} \frac{(-1)^{k_i-j}}{(j-1)!}
u^{j-1}
\sum_{\substack{m_1+m_2+\dots+m_r=k_i-j\\m_i =0}}
\prod_{\substack{l=1\\l\ne i }}^r
\binom{k_l + m_l -1 }{m_l}
\frac{\alpha_l^{-k_l}}{(\alpha_l^{-1} - \alpha_i^{-1})^{k_l+m_l}}.
\end{align*}

\end{remark}

\begin{proof}[Proof of Proposition \ref{LevelDistrib}]
 
The proof has two steps. First we compute the remaining memory $L_n$ when the particle jumps from site $i$ to site $i-1$. This will allow us to compute the distribution of  $G_n$ in the second part of the proof.

Consider a family $E_j$, $j \geq 1$, of i.i.d. exponential random variables with mean 1. We can represent $(X_n,L_n)$ as follows. 
Suppose that $X_n(0) = N_0(n)$ and $L_n(0)<0$.
For $0 \leq i \leq N_0(n)$ let
\begin{align}
\tau_0 &=0,\label{s28.1}\\
\tau_i &= \inf\left\{t > \tau_{i - 1} : \int_{\tau_{i - 1}}^ta_{N_0(n)-i+1,N_0(n)- i}(L_n(s))ds > E_i\right\}, \qquad i\geq 1,\label{s28.2}\\
\Delta \tau_i &= \tau_{i+ 1} - \tau_{i}, \qquad i\geq 0.\label{s28.3}
\end{align}
The following remarks apply as long as $L_n$ stays negative.
The $\tau_i$'s are the times when $X_n$ jumps from  $N_0(n)-i+1$ to $N_0(n)-i $. The amount of time that the process spends at  $N_0(n)-i$ is represented by $\Delta \tau_i$. It follows that $v_{N_0(n)-i}(n)\Delta \tau_i$
is the amount of memory (i.e., the increment of $L_n$) that is accumulated at $N_0(n)-i$. If the process $X_n$ arrives at 
$N_0(n)-i$ with $L_n(\tau_{i}) = \ell<0$  then $X_n$ will leave $N_0(n)-i$ for $N_0(n)-i - 1$ at time $\tau_{i + 1}$ with the memory process $L_n$ taking the value
\begin{align}\label{s28.4}
L_n(\tau_{i + 1}) = \ell + v_{N_0(n)-i}(n)\Delta \tau_i 
=-(\vert \ell\vert-v_{N_0(n)-i}(n)\Delta \tau_i),
\end{align}
provided this quantity is
negative. 

By definition, $X_n$ arrives at $N_0(n) - i$ at time $\tau_{i}$. 
Recall our choice for $a_{i,i+1}(\ell)$ from \eqref{s21.7}-\eqref{s21.8}. It follows from this and \eqref{s28.1}-\eqref{s28.3} that if
 $L_n(\tau_{i}
) = \ell<0$ then
$\Delta \tau_{i}$ is the smallest $t>0$ such that
\begin{align*}
\int_0^tc_{N_0(n) - i - 1}(n)(|\ell| - v_{N_0(n) - i}(n)s)ds > E_{i+1}.
\end{align*}
We set both sides  equal to each other and solve the resulting equation for $t$ as follows. First, we have
\begin{align*}
|\ell| t - v_{N_0(n) - i}(n)t^2/2 = E_{i+1}/c_{N_0(n) - i - 1}(n),
\end{align*}
and then we find zeros using the quadratic formula,
\begin{align}\label{Time_Spent}
\frac{|\ell|}{v_{N_0(n) - i}(n)} \pm \frac{\sqrt{\ell^2 - 2(v_{N_0(n) - i}(n)E_{i+1}/c_{N_0(n) - i - 1}(n))}}{v_{N_0(n) - i}(n)}.
\end{align}
Provided $2(v_{N_0(n) - i}(n)E_{i+1}/c_{N_0(n) - i - 1}(n))$ is sufficiently small, the first
nonnegative zero is
\begin{align}\label{Time_Spent2}
\frac{|\ell|}{v_{N_0(n) - i}(n)} - \frac{\sqrt{\ell^2 - 2(v_{N_0(n) - i}(n)E_{i+1}/c_{N_0(n) - i - 1}(n))}}{v_{N_0(n) - i}(n)}.
\end{align}
If we take this as the value of $\Delta \tau_{i}$ and combine this formula with \eqref{s28.4}, we obtain,
\begin{align*}
L_n(\tau_{i+1})=-\left(\ell^2-2\frac{v_{N_0(n)-i}(n)E_{i+1}}{c_{N_0(n)-i-1}(n)}\right)^{1/2}.
\end{align*}
It follows from this and \eqref{s28.6} that
\begin{align*}
L_n^2(\tau_{i+1})&=L_n^2(\tau_i)-2\frac{v_{N_0(n)-i}(n)E_{i+1}}{c_{N_0(n)-i-1}(n)}
=L_n^2(\tau_i)-2 \mu_{N_0(n)-i-1}(n) E_{i+1}.
\end{align*}

Hence, if $L_n(0) = \ell <0$ then for $k=0,1,\dots, N_0(n)-1$,
\begin{align*}
&\{G_n= N_0(n)-k\} \\
&=  \left\{\sum_{i=0}^{k-1}\mu_{N_0(n)-i-1}(n)E_{i+1} < \ell^2/2,
 \sum_{i=0}^{k}\mu_{N_0(n)-i-1}(n)E_{i+1} \geq \ell^2/2\right\}\\
&=  \left\{\sum_{j=N_{0}(n)-k}^{N_0(n)-1}\mu_{j}(n)E_{N_0(n)-j} < \ell^2/2,
 \sum_{j=N_{0}(n)-k-1}^{N_0(n)-1}\mu_{j}(n)E_{N_0(n)-j} \geq \ell^2/2\right\}.
\end{align*}
If we change the variable by taking $m=N_0(n)-k$ then we obtain for $m=1,2,\dots, N_0(n)$,
\begin{align}\label{s29.1}
\{G_n= m\} 
=  \left\{\sum_{j=m}^{N_0(n)-1}\mu_{j}(n)E_{N_0(n)-j} < \ell^2/2,
 \sum_{j=m-1}^{N_0(n)-1}\mu_{j}(n)E_{N_0(n)-j} \geq \ell^2/2\right\}.
\end{align}
Set 
\begin{align*}
Z_m(n)=\sum_{j=m}^{N_0(n)-1}\mu_j(n)E_{N_0(n)-j},\qquad m=1,2,\dots, N_0(n).
\end{align*}
It follows from \eqref{s27.1} that the density of $Z_m(n)$ is given by
\begin{align}\label{s29.2}
f_{Z_{m}(n)}(t):=
\left(\prod_{j=m}^{N_0(n)-1}\frac 1{\mu_j(n)} \right)
\sum_{k=m}^{N_0(n)-1} \left(\exp(-t / \mu_k(n)) 
\prod_{\substack{i=m\\i \ne k}}^{N_0(n)-1}
\frac1{1/\mu_i(n) -1/ \mu_k(n)}
\right),
\end{align}
provided that $\mu_j(n)\neq \mu_k(n)$ for $k\neq j$; this is the case because we assumed that $\bold{F2}$ or $\bold{F'}$ holds. 

For $m= N_0(n)$, we do not need \eqref{s29.2}; formula \eqref{s29.1} yields, 
\begin{align*}
\mathbb{P}(G_n=N_0(n)) &= \mathbb{P}\left(\mu_{N_0(n)-1}(n)E_{1}\geq \ell^2/2\right)
= e^{-\mu_{N_0(n)-1}(n)^{-1}\ell^2/2}.
\end{align*}
For $m=1, \cdots, N_0(n)-1$, we use \eqref{s29.1} and \eqref{s29.2} as follows,
\begin{align}\label{o4.3}
&\mathbb{P}  (G_n=k)= \mathbb{P}\left(Z_{m-1}(n)\geq \ell^2/2, Z_{m}(n)<\ell^2/2\right)\\
&= \int_0^{\ell^2/2} \mathbb{P}\left(\mu_{m-1}(n)E_{N_0(n) -m+1}\geq \ell^2/2-u\right)f_{Z_{m}}(u)du \notag \\
&= \int_0^{\ell^2/2} \exp\left(-\mu_{m-1}(n)^{-1}(\ell^2/2-u)\right) \times \notag  \notag \\
&\qquad \times \left(\prod_{j=m}^{N_0(n)-1}\frac1{\mu_j(n)} \right)
\sum_{k=m}^{N_0(n)-1} \left(\exp(-u / \mu_k(n)) 
\prod_{\substack{i=m\\i \ne k}}^{N_0(n)-1}
\frac1{1/\mu_i(n) - 1/\mu_k(n)}
\right)du \notag \\
&= \exp\left(-\mu_{m-1}(n)^{-1}\ell^2/2\right)
\left(\prod_{j=m}^{N_0(n)-1}\frac1{\mu_j(n)} \right) \times \notag \\
&\qquad \times  
\sum_{k=m}^{N_0(n)-1} \left(\int_0^{\ell^2/2}\exp(- (\mu_k(n)^{-1}-\mu_{m-1}(n)^{-1})u) du
\prod_{\substack{i=m\\i \ne k}}^{N_0(n)-1}\frac1{1/\mu_i(n) - 1/\mu_k(n)}
\right) \notag \\
&= \exp\left(-\mu_{m-1}(n)^{-1}\ell^2/2\right)
\left(\prod_{j=m}^{N_0(n)-1}\frac1{\mu_j(n)} \right) \times \notag \\
&\qquad \times  
\sum_{k=m}^{N_0(n)-1} \left(
\frac{1- \exp(- (\mu_k(n)^{-1}-\mu_{m-1}(n)^{-1})\ell^2/2)}
{\mu_k(n)^{-1}-\mu_{m-1}(n)^{-1}}
\prod_{\substack{i=m\\i \ne k}}^{N_0(n)-1}\frac1{1/\mu_i(n) - 1/\mu_k(n)}
\right) \notag \\
&= \left(\prod_{j=m}^{N_0(n)-1}\frac1{\mu_j(n)} \right)\times \notag \\
&\quad \times  
\sum_{k=m}^{N_0(n)-1} \left(
\left(\exp\left(-\mu_{k}(n)^{-1}\ell^2/2\right)- \exp(- \mu_{m-1}(n)^{-1}\ell^2/2)\right)
\prod_{\substack{i=m-1\\i \ne k}}^{N_0(n)-1}\frac1{1/\mu_i(n) - 1/\mu_k(n)}
\right) \notag \\
&= \left(\prod_{j=m}^{N_0(n)-1}\frac1{\mu_j(n)} \right)
\sum_{k=m}^{N_0(n)-1} \left(\exp\left(-\mu_{k}(n)^{-1}\ell^2/2\right)
\prod_{\substack{i=m-1\\i \ne k}}^{N_0(n)-1}\frac1{1/\mu_i(n) - 1/\mu_k(n)}
\right) \notag \\
& \qquad-\left(\prod_{j=m}^{N_0(n)-1}\frac1{\mu_j(n)} \right)
\exp(- \mu_{m-1}(n)^{-1}\ell^2/2)
\sum_{k=m}^{N_0(n)-1} 
\prod_{\substack{i=m-1\\i \ne k}}^{N_0(n)-1}\frac1{1/\mu_i(n) - 1/\mu_k(n)}. \notag 
\end{align}
We now apply \eqref{o1.1} to the last line to obtain
\begin{align*}
\mathbb{P} & (G_n=k)
= \left(\prod_{j=m}^{N_0(n)-1}\frac1{\mu_j(n)} \right) 
\sum_{k=m}^{N_0(n)-1} \left(\exp\left(-\mu_{k}(n)^{-1}\ell^2/2\right)
\prod_{\substack{i=m-1\\i \ne k}}^{N_0(n)-1}\frac1{1/\mu_i(n) - 1/\mu_k(n)}
\right) \notag \\
& \qquad-\left(\prod_{j=m}^{N_0(n)-1}\frac1{\mu_j(n)} \right)
\exp(- \mu_{m-1}(n)^{-1}\ell^2/2)
\prod_{i=m}^{N_0(n)-1}
\frac1{1/\mu_i(n) - 1/\mu_{m-1}(n)}
 \notag \\
&= \left(\prod_{j=m}^{N_0(n)-1}\frac1{\mu_j(n)} \right)
\sum_{k=m-1}^{N_0(n)-1} \left(\exp\left(-\mu_{k}(n)^{-1}\ell^2/2\right)
\prod_{\substack{i=m-1\\i \ne k}}^{N_0(n)-1}\frac1{1/\mu_i(n) - 1/\mu_k(n)}
\right) \notag \\
&= \mu_{m-1}(n)f_{Z_{m-1}}(n)(\ell^2/2).
\end{align*}
This and \eqref{s29.2} yield \eqref{o1.2}. 
Finally, \eqref{s30.3} is true because $p_j(n, \ell) = 0$ for $j\notin \prt \calD_n^-$.
\end{proof}

\begin{proof}[Proof of Proposition \ref{CondExitVel}]

Suppose that $E_j$, $j\geq 0$, are i.i.d. exponential with mean 1. 
Suppose that  $X_n(0)=i\in \prt\calD_n^-$ and  $L_n(0)=\ell\geq 0$, and let $T$ be the time of the first jump, necessarily to $i+1$. We can represent $T$ as follows,
\begin{align}\label{o3.3}
T=\inf\left(t>0: \int_0^t c_i(n)\big(\ell+ v_i(n)s\big)ds\geq E_i\right).
\end{align}
Hence, $T$ is the smallest positive solution to
\begin{align}\label{o3.4}
T\ell + T^2 v_i(n) /2 = E_i/c_i(n).
\end{align}
This yields
\begin{align}\label{TimeToWait}
v_i(n) T &= -\ell +\left(\ell^2 + 2\frac{v_i(n)}{c_i(n)}E_i\right)^{1/2},\\
\label{Pos_initial_velo}
L_n(T)&= \ell+ v_i(n)T=\left(\ell^2 + 2\frac{v_i(n)}{c_i(n)}E_i\right)^{1/2}.
\end{align}

Recall notation from \eqref{s30.21} and \eqref{s30.23}.
By the strong Markov property applied at the stopping time $\mc{T}_n$, the distribution of  $L_n(\mc{U}_n)$ is the same in the following cases: (i)  $X_n(0) =N_0(n)$, $L_n(0) = \ell<0$, and $G_n = k $, and (ii)
$X_n(0) = k$ and $L_n(0) =0$.
Assume (ii).
An application of the strong Markov property at the jump times from $j$ to $j+1$ for $j=k, \dots , N_0(n)$, and \eqref{Pos_initial_velo} show that the distribution of $L_n(\mc{U}_n)$ is the same as that of
\begin{align*}
\left(2\sum_{j=k}^{N_0(n)}E_{j}\frac{v_{j}(n)}{c_{j}(n)}\right)^{1/2}
=\left(2\sum_{j=k}^{N_0(n)}E_{j}\lambda_{j}(n)\right)^{1/2}.
\end{align*}
This proves \eqref{s29.3}.

To prove \eqref{s29.4}, note that $L_n(\mc{U}_n)^2/2$ can be represented as the sum of independent exponential random variables. Their means are all distinct, i.e., $\lambda_j(n)\neq \lambda_i(n)$ for all $i\neq j$, 
because we assumed that either $\bold{F1}$ or $\bold{F'}$ hold.
Thus, we can use \eqref{s27.1} to conclude that
\begin{align*}
\mathbb{P}&(L_n(\mc{U}_n)\leq r) = \mathbb{P}( L_n(\mc{U}_n)^2/2\leq r^2/2)\\
&= \int_0^{r^2/2}\left(\prod_{j=k}^{N_0(n)}\frac1{\lambda_j(n)} \right)
\sum_{j=k}^{N_0(n)} \left(\exp(-t / \lambda_j(n)) 
\prod_{\substack{i=k\\i \ne j}}^{N_0(n)}
\frac1{1/\lambda_i(n) - 1/\lambda_j(n)}
\right)dt.
\end{align*}
Differentiating the above expression with respect to $r$ yields \eqref{s29.4}.
\end{proof}

\begin{proof}[Proof of Theorem \ref{ThinBoundaries}]

(i) In view of Remark \ref{s30.20} (ii) and explicit formulas \eqref{s30.1}-\eqref{s30.3}, the limit in \eqref{s30.4} must exist, except that we have to show that the limit does not involve division by 0. By Remark \ref{s30.20} (iii), $\mu_i > \mu_{i+1} >0$, so one can take the limit in \eqref{s30.1}-\eqref{s30.3} as $n\to \infty$ and the limiting formulas do not involve division by 0. 

(ii)
It follows from Remark \ref{s30.20} (ii) that, for every $k\leq N$,
\begin{align*}
 \left(2\sum_{j=k}^{N}\lambda_j(n)E_j\right)^{1/2} 
\to 
\left(2\sum_{j=k}^{N}\lambda_j E_j\right)^{1/2} ,
\end{align*}
in distribution. This and part (i) of the theorem easily imply part (ii).

Part (iii) is a special case of part (ii).
\end{proof}

\begin{proof}[Proof of Proposition \ref{CVGE_probs}]

Suppose that $E_j$, $j\geq 1$, are i.i.d. exponential with mean 1. Recall notation from \eqref{o2.1}-\eqref{o2.4}.
Fix some $k\geq 1$ and consider $n$ such that $N_0(n)>k$. Set 
\begin{align*}
&Y_{n,k} = \sum_{j= k}^\infty\mu'_j(n)E_j,
\qquad Y_k = \sum_{j= k}^\infty \mu_j E_j.
\end{align*}
We have assumed $\bold{G1}$ so  $\sum_{j\geq 1} \mu_j < \infty$. This, the fact that $E_j$'s are exponential and Kolmogorov's three-series theorem easily imply that $Y_k$ is well defined and finite, a.s.

Since $\mu_kE_k$ has a density, so does $Y_k =\mu_kE_k+ \sum_{j= k+1}^\infty \mu_j E_j$. Hence, $\P(Y_k= \ell^2/2) =0$ for every $\ell$. Fix some $\ell<0$ and find $\eps>0$ so small that
\begin{align}\label{o2.6}
\mathbb{P}\left( (\ell^2-\epsilon)/2 \leq Y_{k}\leq (\ell^2+\eps)/2\right)\leq \delta .
\end{align}

We apply formula \eqref{s29.1} to see that
\begin{align*}
p_k(n,\ell_n) &=
\mathbb{P}( G_n = k\mid X_n(0) = N_0(n), L_n(0) = \ell_n)\\
&=\mathbb{P}\left(\sum_{j=k-1}^{N_0(n)-1}\mu_j(n)E_j\geq \frac{\ell_n^2}{2}, \sum_{j=k}^{N_0(n)-1}\mu_j(n)E_j<\frac{\ell_n^2}{2}\right)\\
&=\mathbb{P}\left(\sum_{j=k-1}^{N_0(n)-1}\mu_j(n)E_j\geq \frac{\ell_n^2}{2}\right) - \mathbb{P}\left(\sum_{j=k}^{N_0(n)-1}\mu_j(n)E_j\geq \frac{\ell_n^2}{2}\right)\\
&=\mathbb{P}\left(\sum_{j= k-1}^\infty\mu'_j(n)E_j\geq \frac{\ell_n^2}{2}\right) - \mathbb{P}\left(\sum_{j= k}^\infty\mu'_j(n)E_j\geq \frac{\ell_n^2}{2}\right)\\
&= \mathbb{P}\left(Y_{n,k-1}\geq \ell_n^2/2\right)-\mathbb{P}\left(Y_{n,k}\geq \ell_n^2/2\right) .
\end{align*}
It will suffice to prove that, for any fixed $k$, $\mathbb{P}\left(Y_{n,k}\geq \ell_n^2/2\right)$ converges to $\mathbb{P}\left(Y_{k}\geq \ell^2/2\right)$
as $n$ goes to infinity.

Since $\ell_n\to \ell$, we can find $n_1$ so large that $\left\vert\ell_n^2/2- \ell^2/2\right\vert < \eps/2$ for $n\geq n_1$. 
Then, for $n\geq n_1$,
\begin{align*}
\mathbb{P}\left(Y_{n,k}\geq \frac{\ell^2+\eps}{2}\right)
\leq \mathbb{P}\left(Y_{n,k}\geq \frac{\ell_n^2}{2}\right)
\leq \mathbb{P}\left(Y_{n,k}\geq \frac{\ell^2-\eps}{2}\right),
\end{align*}
and, therefore,
\begin{align}\label{o2.5}
&\left\vert \mathbb{P}\left(Y_{n,k}\geq \frac{\ell_n^2}{2}\right) - \mathbb{P}\left(Y_{k}\geq \frac{\ell^2}{2}\right)\right\vert \\
&\leq \max\left(\left\vert \mathbb{P}\left(Y_{n,k}\geq \frac{\ell^2-\eps}{2}\right) - \mathbb{P}\left(Y_{k}\geq \frac{\ell^2}{2}\right)\right\vert ,\ 
\left\vert \mathbb{P}\left(Y_{n,k}\geq \frac{\ell^2+\eps}{2}\right) - \mathbb{P}\left(Y_{k}\geq \frac{\ell^2}{2}\right)\right\vert \right). \notag
\end{align}
We will estimate one of the quantities under ``max'' on the right hand side. The other one can be estimated in a similar way. 

We have, by assumption $\bold{G1}$, 
\begin{align}\label{o2.8}
\E\left|Y_{n,k}-Y_k\right|
&=
\mathbb{E}\left(\left\vert\sum_{j\geq k}\mu'_j(n)E_j- \sum_{j\geq k}\mu_j E_j\right\vert\right)
\leq \mathbb{E}\left(\sum_{j\geq k}\left\vert\mu'_j(n)- \mu_j\right\vert E_j\right)\\
&= \sum_{j\geq k}\left\vert\mu'_j(n)- \mu_j \right\vert 
\to 0, \qquad \text{  when  } n\to\infty.\notag
\end{align}
Hence $Y_{n,k}$ converges in $L^1$, thus in distribution, to $Y_k$ as $n$ goes to infinity. 
Since $Y_k$ has a density,  Portmanteau's theorem implies that there exists $n_2$ such that for all $n\geq n_2$,
\begin{align*}
\left\vert\mathbb{P}\left( Y_{n,k} \geq \frac{\ell^2-\epsilon}{2}\right)- \mathbb{P}\left( Y_{k} \geq \frac{\ell^2-\epsilon}{2}\right)\right\vert\leq \delta .
\end{align*}
Combined with \eqref{o2.6}, this yields
\begin{align*}
\left\vert\mathbb{P}\left( Y_{n,k} \geq \frac{\ell^2-\epsilon}{2}\right)- \mathbb{P}\left( Y_{k} \geq \frac{\ell^2}{2}\right)\right\vert\leq 2\delta .
\end{align*}
An analogous estimate holds for the other quantity under ``max'' on the right hand side of \eqref{o2.5} so, for large $n$,
\begin{align*}
&\left\vert \mathbb{P}\left(Y_{n,k}\geq \frac{\ell_n^2}{2}\right) - \mathbb{P}\left(Y_{k}\geq \frac{\ell^2}{2}\right)\right\vert \leq 2 \delta.
\end{align*}
Since $\delta>0$ is arbitrarily small, this completes the proof.
\end{proof}

\begin{lemma}\label{o16.1}
Suppose that for every $n\geq 1$, random variables $A_{k,n}$, $k\geq1$, and  $Z_n$ are  defined on the same probability space, and for each $n$, $Z_n$ is independent of $A_{k,n}$, $k\geq1$. 
Suppose that  $A_k$, $k\geq 1$,  and $Z$ are also defined on the same probability space and $Z$ is independent of $A_k$, $k\geq 1$.
Assume that $Z$ and $Z_n$ take only strictly positive integer values, for each $n$, a.s. 
Suppose that $A_{k,n} \to A_k$ and $Z_n \to Z$, in distribution, as $n\to \infty$, for every $k$.
 Let
\begin{align*}
S_n = \sum_{k=1}^\infty A_{k,n} \bone_{\{Z_n=k\}}, \qquad 
S = \sum_{k=1}^\infty A_{k} \bone_{\{Z=k\}}.
\end{align*}
Then $S_n$ converges to $S$ in distribution, as $n\to\infty$.
\end{lemma}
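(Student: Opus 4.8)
The plan is to prove convergence of $S_n$ to $S$ by testing against bounded continuous functions, splitting on the (integer) value of the selector variable and using independence to separate the contribution of $Z_n$ from that of the arrays $A_{k,n}$. First I would fix a bounded continuous $f:\R\to\R$ with $\|f\|_\infty \leq M$, and write, using independence of $Z_n$ from $(A_{k,n})_k$,
\begin{align*}
\E f(S_n) = \sum_{k=1}^\infty \P(Z_n = k)\, \E f(A_{k,n}),
\qquad
\E f(S) = \sum_{k=1}^\infty \P(Z = k)\, \E f(A_{k}),
\end{align*}
where both series converge absolutely since $\sum_k \P(Z_n=k) = \sum_k \P(Z=k) = 1$ (here is where the hypothesis that $Z$ and $Z_n$ take strictly positive integer values, with no mass ``escaping to infinity,'' is used). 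For each fixed $k$, $A_{k,n}\to A_k$ in distribution gives $\E f(A_{k,n}) \to \E f(A_k)$, and $Z_n \to Z$ in distribution together with integer-valuedness gives $\P(Z_n = k) \to \P(Z = k)$ for every $k$.

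Next I would make the interchange of limit and sum rigorous. The clean way is a truncation/tail argument: fix $\eps>0$, choose $K$ so that $\sum_{k>K}\P(Z=k) < \eps$; then by the convergence $\P(Z_n=k)\to\P(Z=k)$ for each $k\le K$ and $\sum_{k\le K}\P(Z_n=k) \to \sum_{k\le K}\P(Z=k)$, we get $\sum_{k>K}\P(Z_n=k) < 2\eps$ for all large $n$. Splitting each of $\E f(S_n)$ and $\E f(S)$ into the head $\sum_{k\le K}$ and tail $\sum_{k>K}$, the tails are bounded by $2M\eps$ and $M\eps$ respectively, while the heads are finite sums of products $\P(Z_n=k)\E f(A_{k,n})$, each of which converges to $\P(Z=k)\E f(A_k)$. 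Letting $n\to\infty$ and then $\eps\to 0$ yields $\E f(S_n)\to\E f(S)$, which is exactly convergence in distribution.

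Alternatively, and perhaps more transparently for the write-up, one can avoid test functions entirely and argue at the level of random variables via a coupling: realize $Z_n$ and $Z$ jointly so that $\P(Z_n \ne Z)\to 0$ is \emph{not} automatically available (they need not be coupled that way), so the test-function route is in fact the safer one. A third option is to note $S_n \overset{d}{=} A_{\zeta_n, n}$ in an enlarged sense and invoke a standard ``randomly indexed sequences'' lemma, but stating the hypotheses of such a lemma is essentially the argument above, so I would just do it directly.

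The main obstacle is the uniformity needed to swap the infinite sum with the limit in $n$: pointwise convergence $\P(Z_n=k)\to\P(Z=k)$ for each $k$ does not by itself control the tail $\sum_{k>K}\P(Z_n=k)$ uniformly in $n$. This is precisely where the hypothesis that each $Z_n$ (and $Z$) is a.s.\ a \emph{positive integer} — so that $\sum_{k\ge 1}\P(Z_n=k)=1$ with no defect of mass — does the work: combined with Fatou/Scheffé-type reasoning (pointwise convergence of probability mass functions that sum to $1$ forces convergence in total variation on $\N$), it gives the required tail control. I would state this total-variation convergence of the law of $Z_n$ to the law of $Z$ as the key quantitative input and then the rest is bookkeeping with the bound $|{\E f(A_{k,n})}|, |{\E f(A_k)}| \le M$.
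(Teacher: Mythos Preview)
Your proposal is correct and follows essentially the same route as the paper: truncate using tightness of $Z$ (the paper picks $k_0$ with $\P(Z\ge k_0)<\eps$ and then $\P(Z_n\ge k_0)<2\eps$, exactly your head/tail split), then handle the finite sum by separating $Z_n$ from the $A_{k,n}$ via independence and using pointwise convergence of each factor. The only cosmetic difference is that the paper works with characteristic functions and invokes the Skorokhod representation for $Z_n$ plus dominated convergence, whereas you work with a general bounded continuous $f$ and use $\P(Z_n=k)\to\P(Z=k)$ directly; your version is arguably cleaner.
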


\begin{proof}
The proof is routine so we only sketch it. 
For any $\eps >0$, there is $k_0$ such that $\P(Z\geq k_0) < \eps$. Hence, there is $n_0$ such that for $n\geq n_0$, $\P(Z_n\geq k_0) < 2\eps$.
This implies that it will suffice to show that
\begin{align*}
 \sum_{k=1}^j A_{k,n} \bone_{\{Z_n=k\}} \to 
S = \sum_{k=1}^j A_{k} \bone_{\{Z=k\}},
\end{align*}
in distribution, for every fixed $j\geq 1$.

For any random variable $X$, let $\phi_{X}(t)$ denote its characteristic function. 
We need to show that
\begin{align*}
 \E\left(\sum_{k=1}^j \phi_{A_{k,n}}(t) \bone_{\{Z_n=k\}} \right)\to 
\E\left(\sum_{k=1}^j \phi_{A_{k}}(t) \bone_{\{Z=k\}}\right),
\end{align*}
for every real $t$, as $n\to \infty$. This follows from (i) pointwise convergence $\phi_{A_{k,n}}(t) \to \phi_{A_{k}}(t)$, (ii) the Skorokhod representation theorem which lets us assume that $Z_n \to Z$, a.s., and (iii) dominated convergence theorem. 
\end{proof}

\begin{proof}[Proof of Theorem \ref{Infinite_layers}]

Recall notation and definitions from \eqref{o2.1}-\eqref{o2.4} and \eqref{s30.21}-\eqref{s24.1}.
Suppose that $\ell_n < 0$ for $n\geq 1$, $\lim_{n\rightarrow \infty}\ell_n =\ell < 0$, and $\lim_{n\to \infty } N_0(n) =\infty$. 
Assume that  $X_n(0) =N_0(n)$ and $L_n(0) = \ell_n$ for all $n$.
By Corollary \ref{o2.7},
the distribution of $L_n(\mc{U}_n)$ is the same  as that of
\begin{align*}
\sum_{j=0}^\infty \left(2\sum_{i=j}^\infty\lambda'_i(n) E_i\right)^{1/2}
\bone_{Z_n(\ell_n)=j},
\end{align*}
where $E_1, E_2, \dots$ are i.i.d. exponential random variables with mean 1; $Z_n(\ell_n)\geq 0$ is an integer valued random variable, independent of $E_j$'s and such that
 $\mathbb{P}(Z_n(\ell_n)=j)=p_j(n,\ell_n)$, $ j\geq0$.
In other words, 
if $\mc{L}_{j,n}$ denotes the distribution of $\left(2\sum_{i=j}^\infty\lambda'_i(n) E_i\right)^{1/2}$ and $\nu_n$ denotes the distribution of $Z_n(\ell_n)$ then 
$\mc{V}_n(\ell_n)$ is a mixture of distributions $\mc{L}_{j,n}$ with the mixing measure $\nu_n$ for the index $j$. 

Let $\mc{L}_{j}$ denote the distribution of $\left(2\sum_{i=j}^\infty\lambda_i E_i\right)^{1/2}$ and let $\nu$ denote the distribution of $Z(\ell)$. 
The argument given in \eqref{o2.8} shows that $\mc{L}_{j,n}\to \mc{L}_{j}$ for every $j$, except that we have to replace $\mu$'s with $\lambda$'s, and use assumption $\bold{G2}$.
Distributions $\nu_n$ converge to $\nu$ by Proposition \ref{CVGE_probs}.
We use Lemma \ref{o16.1} to conclude that $L_n(\mc{U}_n)$ converge in distribution to the mixture of distributions $\mc{L}_{j}$ with the mixing measure $\nu$ for the index $j$.
This is equivalent to the statement of the theorem.
\end{proof}

\begin{lemma}\label{AgainstFlow} 
Consider the noisy model and
recall that in the noisy case we assume that $N_0(n) =1$. 
Let $T_1$ denote the time of the first jump of $X_n$. We have
\begin{align*}
\mathbb{P}\left( X_n(T_1) =0 \mid X_n(0)= 1, \; L_n(0)= \ell > 0\right)
&=\frac{b_1(n)}{c_1(n)+b_1(n)}.
\end{align*}
\end{lemma}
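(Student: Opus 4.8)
The plan is to read off the dynamics of $(X_n,L_n)$ at site $1$ directly from the construction \eqref{o11.1}--\eqref{Evol_Interjumps} together with the noisy rates \eqref{o3.1}--\eqref{o3.2}, and then reduce the statement to a routine competing-exponentials identity.

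First I would note that, starting from $X_n(0)=1$ and $L_n(0)=\ell>0$, as long as $X_n$ stays at $1$ the memory evolves by $L_n(t)=\ell+v_1(n)\,t$. Since $1\in\prt\calD_n^-$, Assumption \ref{o8.1}(iv) gives $v_1(n)>0$, so $L_n(t)>0$ for every $t\ge 0$; in particular the memory never changes sign before the first jump, and the rate formulas \eqref{o3.1}--\eqref{o3.2} apply unchanged on $[0,T_1)$. With $i=1$ this gives $a_{1,2}(L_n(t))=c_1(n)L_n(t)$, and with $i=0$ (using $L_n(t)>0$) it gives $a_{1,0}(L_n(t))=b_1(n)L_n(t)$, while $a_{1,j}\equiv 0$ for $j\notin\{0,2\}$; recall that $b_1(n)>0$ is one of the free positive parameters of the noisy model.

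Next, set $M(t)=\int_0^t L_n(s)\,ds=\ell t+v_1(n)t^2/2$, which is continuous, strictly increasing, and maps $[0,\infty)$ onto $[0,\infty)$. Then \eqref{o11.1} yields $T_1^0=M^{-1}\big(E_0^0/b_1(n)\big)$ and $T_1^2=M^{-1}\big(E_0^2/c_1(n)\big)$, with $T_1^j=\infty$ for $j\notin\{0,2\}$, where $E_0^0$ and $E_0^2$ are independent mean-one exponentials; hence $T_1=\min(T_1^0,T_1^2)<\infty$ a.s., and $\{X_n(T_1)=0\}=\{T_1^0<T_1^2\}$ up to a null event. Since $M$ is strictly increasing, $\{T_1^0<T_1^2\}=\{E_0^0/b_1(n)<E_0^2/c_1(n)\}=\{A<B\}$, where $A:=E_0^0/b_1(n)$ and $B:=E_0^2/c_1(n)$ are independent exponential random variables with rates $b_1(n)$ and $c_1(n)$ respectively.

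Finally I would invoke the standard identity $\mathbb{P}(A<B)=b_1(n)/(b_1(n)+c_1(n))$ for independent exponentials with these rates (equivalently, condition on $B$ and use $\mathbb{E}[e^{-b_1(n)B}]=c_1(n)/(c_1(n)+b_1(n))$), which is precisely the asserted probability. There is no genuine obstacle in this argument; the only point requiring care is the verification that $L_n$ stays positive up to $T_1$ so that the rates used are the correct ones, and this is immediate from $v_1(n)>0$.
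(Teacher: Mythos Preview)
Your proof is correct and follows essentially the same route as the paper's: both reduce the event $\{X_n(T_1)=0\}$ to $\{E_0^0/b_1(n)<E_0^2/c_1(n)\}$ and then invoke the competing-exponentials identity. The only cosmetic difference is that the paper solves the quadratic $\int_0^t a_{1,j}(L_n(s))\,ds=E$ explicitly (as in \eqref{TimeToWait}) and compares the resulting expressions, whereas you bypass this by observing that $M(t)=\int_0^t L_n(s)\,ds$ is strictly increasing and applying $M^{-1}$; your version is slightly cleaner for this reason.
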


\begin{proof}
 
 Recall from \eqref{o3.1}-\eqref{o3.2} that, for $\ell>0$, the jump rates are $a_{1,0}(\ell)= b_1(n)|\ell|$ and $a_{1,2}(\ell)= c_1(n)|\ell|$. 
Suppose that $E_1$ and $E_1'$ are  independent exponential random variables with parameter $1$.
An argument similar to that in \eqref{o3.3}-\eqref{TimeToWait} yields the following representation of the probability in question,
\begin{align*}
&\mathbb{P}\left( X_n(T_1) =0 \mid X_n(0)= 1, \; L_n(0)= \ell > 0\right)\\
 &=\mathbb{P}\left(-\frac{\ell}{v_1(n)}+\frac{1}{v_1(n)}
\left(\ell^2 + 2\frac{v_1(n)}{b_1(n)}E_1\right)^{1/2}
<-\frac{\ell}{v_1(n)}+\frac{1}{v_1(n)}
\left(\ell^2 + 2\frac{v_1(n)}{c_1(n)}E_1'\right)^{1/2}\right)\\
&= \mathbb{P}\left(\frac{E_1}{b_1(n)}<\frac{E_1'}{c_1(n)}\right)
=\frac{b_1(n)}{c_1(n)+b_1(n)}.
\end{align*}  
\end{proof}

\begin{proof}[Proof of Proposition \ref{Layer_Reverse}]

The proof is similar to that of Proposition \ref{LevelDistrib} so we will only sketch the main steps. The key to our calculation is a formula analogous to \eqref{s29.1}. In the present case, $X_n$ starts from 1 and may jump between 0 and 1 any number of times before $L_n$ changes the sign from negative to positive. Every visit to 0 or 1 is associated with a positive increment of $L_n$. These observations can be implemented as follows.

Assume that $\beta_1(n)= c_0(n)/v_1(n)\neq b_1(n)/v_0(n) = \beta_2(n)$. 
For $k\geq 0$, let 
\begin{align*}
Y_k(n) = \sum_{j=1}^k \frac{v_1(n)}{c_0(n)} E_j + \sum_{j=1}^k \frac{v_0(n)}{b_1(n)} E_j'
=  \sum_{j=1}^k \beta_1(n)^{-1} E_j + \sum_{j=1}^k \beta_2(n)^{-1} E_j',
\end{align*}
where $(E_j)_j$ and $(E_j')_j$ are i.i.d exponential random variables with parameter $1$. 
It follows from Remark \ref{o4.1} (iii) with $r=2$ and  $k_1=k_2=k$ that for $k\geq 1$, the density function  of $Y_k(n)$  is
\begin{align}\label{o4.2}
f_{k,n}(u)=
 \beta_1(n)^k\beta_2(n)^k\sum_{i=1}^2\sum_{j=1}^k \frac{(-1)^{k-j}}{(j-1)!}u^{j-1}e^{-\beta_i(n)u}
 \binom{2k-j-1}{k-j}\big(\beta_{3-i}(n)-\beta_i(n)\big)^{-(2k-j)}.
\end{align}

The following formula is analogous to \eqref{o4.3},
\begin{align}\label{Prob_rev_@1}
p_1(n,\ell) &=\sum_{k\geq 0}\mathbb{P}\left(Y_k(n) + \beta_1(n)^{-1}E_{k+1}\geq \ell^2/2\; ,\; Y_k(n)<\ell^2/2\right).
\end{align} 
A single term in the sum has the following representation,
\begin{align}\label{at_kth_step}
&\mathbb{P}\left(Y_k(n) + \beta_1(n)^{-1}E_{k+1}\geq \ell^2/2\; ,\; Y_k(n)<\ell^2/2\right)\\
&= \int_0^{\ell^2/2} \mathbb{P}\left(\beta_1(n)^{-1} E_{k+1}\geq \ell^2/2-u\right)f_{k,n}(u)du\notag\\
&=\int_0^{\ell^2/2} \exp\left(-\beta_1(n)(\ell^2/2-u)\right) f_{k,n}(u)du\notag\\
&=\beta_1(n)^k\beta_2(n)^k \exp(-\beta_1(n)\ell^2/2)\int_0^{\ell^2/2}\Bigg[\sum_{i=1}^2\sum_{j=1}^k \frac{(-1)^{k-j}}{(j-1)!}u^{j-1}
\exp(-(\beta_i(n)-\beta_1(n))u)\notag\\
&\qquad\qquad\qquad\qquad\qquad\qquad\qquad\times \binom{2k-j-1}{k-j}\big(\beta_{3-i}(n)-\beta_i(n)\big)^{-(2k-j)}\Bigg]du.\notag
\end{align}
This and \eqref{Prob_rev_@1} prove part (i) of proposition.

If $\beta_1(n)= c_0(n)/v_1(n)= b_1(n)/v_0(n) = \beta_2(n)$ then $Y_k(n)$ is 
the sum of $2k$ i.i.d. exponential random variables with parameter $\beta_1(n)$ and, therefore, it has the following Gamma density,
\begin{align*}
f_{k,n}(u) =\frac{\beta_1(n)^{2k} u^{2k-1}}{(2k-1)!}\exp(-\beta_1(n) u).
\end{align*}
It follows that
\begin{align}\label{o4.4}
&\mathbb{P}\left(Y_k(n) + \beta_1(n)^{-1}E_{k+1}\geq \ell^2/2\; ,\; Y_k(n)<\ell^2/2\right)\\
&= \int_0^{\ell^2/2} \mathbb{P}\left(\beta_1(n)^{-1} E_{k+1}\geq \ell^2/2-u\right)f_{k,n}(u)du\notag\\
&=\int_0^{\ell^2/2} \exp\left(-\beta_1(n)(\ell^2/2-u)\right) 
\frac{\beta_1(n)^{2k} u^{2k-1}}{(2k-1)!}\exp(-\beta_1(n) u) du\notag\\
&= \frac{\left(\beta_1(n)\ell^2/2\right)^{2k} }{(2k)!}
\exp(-\beta_1(n) \ell^2/2).\notag
\end{align}
The second part of the proposition follows from this formula and \eqref{Prob_rev_@1}.
\end{proof}

\begin{proof}[Proof of Corollary \ref{Limit_Layer_reverse}]

We have to prove that we can pass to the limit in formulas given in Proposition \ref{Layer_Reverse}. We will refer to the proof of that proposition below.
It follows easily from assumptions $\bold{F3}$ and $\bold{K}$ and explicit formulas in \eqref{at_kth_step} and \eqref{o4.4} that for every $\ell<0$ and $k\geq 0$,
\begin{align}\label{o4.6}
\lim_{n\rightarrow \infty}
\mathbb{P}\left(Y_k(n) + \beta_1(n)^{-1}E_{k+1}\geq \ell^2/2\; ,\; Y_k(n)<\ell^2/2\right)
\end{align}
exists.
For $k\geq 1$, we have
\begin{align}\label{o4.5}
\mathbb{P}&\left(Y_k(n) + \beta_1(n)^{-1}E_{k+1}\geq \ell^2/2\; ,\; Y_k(n)<\ell^2/2\right)
\leq \mathbb{P}\left( Y_k(n)<\ell_n^2/2\right)\\
&\leq \mathbb{P}\left(\beta_1(n)^{-1}E_j<\ell_n^2/2 \;\forall\; j=1,\cdots k\right)
= \left(1-e^{-\beta_1(n)\ell_n^2/2}\right)^k.\notag
\end{align}
The assumptions made in the corollary and $\bold{F3}$ imply that for some $\gamma_1< \infty$ and $n_1$, we have $\beta_1(n)\ell_n^2/2 < \gamma_1$ for all $n\geq n_1$. Hence, there exists $q\in (0,1)$ such that $\Big(1-e^{-\beta_1(n)\ell_n^2/2}\Big)\leq q$ for $n\geq n_1$. This and \eqref{o4.5} imply that the series in \eqref{Prob_rev_@1} is dominated by a geometric series. Thus, in view of \eqref{o4.6}, the limit stated in the corollary exists.
\end{proof}

\begin{proof}[Proof of Proposition \ref{Exit_Velo}]

The proof of the proposition is analogous to that of Corollary \ref{o2.7}. The evolution of the process is split into two parts by the stopping time $\mc{T}_n$. The  pre-$\mc{T}_n$ evolution is captured by Proposition \ref{Layer_Reverse}. The amount accumulated by $L_n$ between times $\mc{T}_n$ and $\mc{U}_n$ can be represented by a formula analogous to \eqref{s24.5} in the noiseless case. In the present case, $X_n$ can jump between 0 and 1 even if $L_n$ is positive so, to account for these jumps, we have to have two sequences of exponential random variables, representing repeated visits at 0 and 1. Once $L_n$ becomes positive, the number of jumps between 0 and 1 is geometric with the parameter determined in Lemma \ref{AgainstFlow}.
 We leave the details of the proof to the reader.
\end{proof}

\begin{proof}[Proof of Theorem \ref{Cvge_Veloc}]

The limits in \eqref{o5.1}-\eqref{o5.2} exist because we assumed
 $\bold{F3}$ and $\bold{K}$.

To prove the second claim of the theorem, note that 
the distributions of random variables in \eqref{s24.3} and \eqref{SnL} are mixtures, with mixing measures being the  distributions of random variables $Z$ and $J(n)$. Due to convergence of the parameters stated in \eqref{o5.1}-\eqref{o5.2}, the distributions of the individual components
$2\frac{v_1(n)}{c_1(n)}E', 2\frac{v_1(n)}{b_1(n)}E_j'$ and $ 2\frac{v_0(n)}{c_0(n)}E_k''$
 in the mixtures converge to the limits $\gamma_1 E',\gamma_0 E_j'$ and $ \gamma_2 E_j''$, which are the terms of the sum in \eqref{o5.7}.
The distributions of the mixing random variables, $Z$ and $J(n)$, converge due to Corollary \ref{Limit_Layer_reverse} and \eqref{o5.2}.
This proves that the mixtures converge, and this is just a different way of expressing the theorem.
\end{proof}

\begin{proof}[Proof of Proposition \ref{timeOnBoundary}]
(i)
First we will consider the noiseless model.

Fix some $\ell <0$ and assume that $X_n(0) = N_0(n)$ and $L_n(0) = \ell$. It is routine to modify our argument for the case $\ell >0$ and  $X_n(0) \ne N_0(n)$. Under these assumptions, $s_0(n) = 0$. We will estimate $t_1(n) - s_0(n) = t_1(n)$. Recall notation from \eqref{s30.21} and \eqref{s30.23} and note that $[s_0(n) , t_1(n)] = [s_0(n), \mc{T}_n) \cup [\mc{T}_n, \mc{U}_n]$.
 
The process $X_n$ will jump toward 0 until it reaches a random point $G_n$ (see \eqref{s30.22} for the definition) and then $X_n$ will jump away from 0 until it exits $\prt \calD_n^-$ at time $\mc{U}_n$. 

Fix some site $i\in \prt \calD_n^- \setminus\{0\}$ and
suppose that $X_n$ arrives at  $i$ at a time $u_-$, and $L_n(u_-) =\ell_i<0$. Let 
\begin{align*}
\Delta u_i^- = \inf\{t\geq 0: X_n(u_- + t)\ne i \text{  or  }
u_- +t = \mc{T}_n\}.
\end{align*}
The following representation of $\Delta u_i^-$ is the same as in the proof of Proposition \ref{LevelDistrib}, except that we are using different notation.
Consider an exponential random variable $E_i$ with mean 1.  
If $\ell_i^2> 2\frac{v_i(n)}{c_{i-1}(n)}E_i $ then, by \eqref{Time_Spent2},
\begin{align}\label{o7.1}
\Delta u_i^-
&= \inf\{t\geq 0: X_n(u_- + t)\ne i\} = \frac{\vert\ell_i\vert}{v_i(n)} - \frac{1}{v_i(n)}\left(\ell_i^2 -2 \frac{v_i(n)}{c_{i-1}(n)}E_i\right)^{1/2}\\
&<  \left(\frac{2}{v_i(n)c_{i-1}(n)}E_i\right)^{1/2}.\notag
\end{align}
The inequality on the right hand side holds because it is equivalent to   $\ell_i^2> 2\frac{v_i(n)}{c_{i-1}(n)}E_i $, as an elementary calculation shows.
If $\ell_i^2\leq 2\frac{v_i(n)}{c_{i-1}(n)}E_i $ then
\begin{align}\label{o7.2}
\Delta u_i^- = \inf\{t\geq 0: 
u_- +t = \mc{T}_n\}
 = \frac{\vert \ell_i\vert}{v_i(n)}\leq \left(\frac{2}{v_i(n)c_{i-1}(n)}E_i\right)^{1/2}.
\end{align} 
It follows from \eqref{o7.1}-\eqref{o7.2} that
\begin{align}\label{o7.3}
\Delta u_i^- \leq \left(\frac{2}{v_i(n)c_{i-1}(n)}E_i\right)^{1/2}.
\end{align} 

If $i=0$ then $X_n$ can jump to site 1 only after time $\mc{U}_n$ so 
\begin{align}\label{o7.4}
\Delta u_0^- = \inf\{t\geq 0: 
u_- +t = \mc{T}_n\}
 = \frac{\vert \ell_0\vert}{v_0(n)}\leq \frac{\vert \ell\vert}{v_0(n)}.
\end{align} 
The above inequality holds because when $X_n$ is on its way from $N_0(n)$ to 0 and the initial value of $L_n$ is $\ell<0$ then the value of $|L_n|$ can only  decrease.

If $X_n$ does not visit $i$ on its way from $N_0(n)$ to 0 then we let $\Delta u_i^- =0$.
Summing $\Delta u_i^- $ over all $i\in \prt \calD_n^-$, we obtain from \eqref{o7.3} and \eqref{o7.4},
\begin{align}\label{o7.5}
\mc{T}_n = \sum_{i=0}^{N_0(n)} \Delta u_i^-
\leq \frac{\vert \ell\vert}{v_0(n)} + \sum_{i=0}^{N_0(n)}
\left(\frac{2}{v_i(n)c_{i-1}(n)}E_i\right)^{1/2},
\end{align}
where $E_i$ are i.i.d. exponential with mean 1.

Fix some site $i\in \prt \calD_n^- \setminus\{0\}$ and
suppose that $X_n$ arrives at  $i$ at a time $u_+$, but now suppose that $L_n(u_+) =\ell_i>0$. Let 
\begin{align*}
\Delta u_i^+ = \inf\{t\geq 0: X_n(u_- + t)\ne i\}.
\end{align*}
Reasoning as in the previous part of the proof and using  \eqref{Pos_initial_velo}, we obtain
\begin{align}\label{o7.6}
\Delta u_i^+
& = -\frac{\ell_i}{v_i(n)} + \frac{1}{v_i(n)}\left(\ell_i^2 +2 \frac{v_i(n)}{c_{i}(n)}E'_i\right)^{1/2}
\leq  \left(\frac{2}{v_i(n)c_{i}(n)}E'_i\right)^{1/2},
\end{align}
where $E'_i$ is mean one exponential.
The above inequality is elementary.
Since $G_n$ does not have to be 0, we have the following bound
\begin{align}\label{o7.7}
\mc{U}_n -\mc{T}_n\leq \sum_{i=0}^{N_0(n)} \Delta u_i^+
\leq \sum_{i=0}^{N_0(n)}
\left(\frac{2}{v_i(n)c_{i}(n)}E'_i\right)^{1/2},
\end{align}
where $E'_i$ are i.i.d. exponential with mean 1.

Combining \eqref{o7.5} and \eqref{o7.7}, we obtain
\begin{align*}
\mc{U}_n 
\leq \frac{\vert \ell\vert}{v_0(n)} + \sum_{i=0}^{N_0(n)}
\left(\frac{2}{v_i(n)c_{i-1}(n)}E_i\right)^{1/2}
+\sum_{i=0}^{N_0(n)}
\left(\frac{2}{v_i(n)c_{i}(n)}E'_i\right)^{1/2},
\end{align*}
where $E_i$ and $E'_i$ are i.i.d. exponential with mean 1.
We now remove the condition $L_n(0)=\ell$ and write 
\begin{align}\label{o7.10}
\mc{U}_n 
\leq \frac{\vert L_n(0)\vert}{v_0(n)} + \sum_{i=0}^{N_0(n)}
\left(\frac{2}{v_i(n)c_{i-1}(n)}E_i\right)^{1/2}
+\sum_{i=0}^{N_0(n)}
\left(\frac{2}{v_i(n)c_{i}(n)}E'_i\right)^{1/2}.
\end{align}
By assumptions $\bold{F3}$ and $\bold{G1}$-$\bold{G2}$, there exists a constant $C>0$ such that
\begin{align*}
\mc{U}_n 
\leq \frac{C\vert L_n(0)\vert}{n} + \frac C n \sum_{i=0}^{N_0(n)}
\sqrt{E_i}
+\frac C n \sum_{i=0}^{N_0(n)}
\sqrt{E'_i}.
\end{align*}
The first term on the right hand side converges to 0 in distribution, as $n\to \infty$, because we assumed that 
 the distributions of $ L_n(0)$, $n\geq 1$, are tight. The other two terms converge to 0 in distribution by the law of large numbers, in view of Assumption \ref{o8.1} (ii).
This completes the proof that 
$\lim_{n\to\infty} t_1(n) - s_0(n) =0$ in distribution.

To extend the proof to show that $\lim_{n\to\infty} t_{j+1}(n) - s_j(n) =0$ for $j\geq 1$, it will suffice, by the strong Markov property, to argue that for every fixed $j$, the distributions of $L_n(s_j(n))$, $n\geq 1$, are tight.

By randomizing $\ell$ in Corollary \ref{o2.7}, we see that if $ L_n(0)$, $n\geq 1$, are tight then $L_n(t_1(n))$, $n\geq 1$, are tight. Since $L_n(s_{j+1}(n)) = L_n(t_{j+1}(n))$ for all $j$, we see that $L_n(s_1(n))$, $n\geq 1$, are tight. Then we proceed by induction and use the strong Markov property to conclude that for every $j$,
$L_n(s_j(n))$, $n\geq 1$, are tight and $L_n(t_j(n))$, $n\geq 1$, are tight.

\medskip

(ii) In the noisy case, the argument is very similar to that in the noiseless case so we will only sketch the proof. The new version of the key estimate will be based on
Proposition \ref{Exit_Velo}. We will 
use the representation \eqref{s24.3}-\eqref{SnL}
together with the inequality $\sqrt{x+y}\leq \sqrt{x}+\sqrt{y}$.
We obtain the following
 analogue of \eqref{o7.10},
\begin{align*}
\mc{U}_n 
\leq & \frac{\vert L_n(0)\vert}{v_0(n)} + 
\left(\frac{2}{v_0(n)c_0(n)}\right)^{1/2}\sqrt{E_0} 
+ \left(\frac{2}{v_1(n)c_1(n)}\right)^{1/2}\sqrt{E_1}\\ 
&+ \sum_{j=1}^{J(n)-1}\left(
\left(\frac{2}{v_{1}(n)b_{1}(n)}\right)^{1/2}\sqrt{E_{j}'}
+\left(\frac{2}{v_0(n)c_0(n)}\right)^{1/2}\sqrt{E_{j}''}\right).
\end{align*}
Random variables $E_0$, $E_1$, $(E_k')_k$, $(E_k'')_k$ are i.i.d. exponential with mean 1. Random variable $J(n)$ is geometric  with parameter $c_1(n)/(c_1(n)+b_1(n)) $.  All of these random variables are assumed to be independent.  

Assumptions  $\bold{F1}$-$\bold{F2}$ or $\bold{F'}$, $\bold{F3}$, and $\bold{K}$ imply that there exists $C$ such that
\begin{align*}
\mc{U}_n 
\leq & \frac C n \left (|L_n(0)| + \sqrt{E_0} 
+ \sqrt{E_1} + \sum_{j=1}^{J(n)-1}\left(
\sqrt{E_{j}'}
+\sqrt{E_{j}''}\right)\right),
\end{align*}
and the parameter of $J(n)$ is within $(1/C,C)$.
Just like in part (i) of the proof, we can use tightness arguments and induction to show that $\lim_{n\to\infty} t_{j+1}(n) - s_j(n) =0$ in distribution, for $j\geq 0$.
\end{proof}

\begin{proof}[Proof of Theorem \ref{convProc}] 

(a)
(i) Noiseless case.
Set $N_0(\infty)=\lim_{n\rightarrow\infty}N_0(n)$ and note that, in our models, $N_0(\infty)$ can be finite or infinite. 
In both cases, in view of \eqref{o2.1} and  assumption $\bold{G2}$, we have $\sum_{j=0}^{N_0(\infty)} \lambda_j < \infty$. Hence, 
$\E\left(2\sum_{j = 0}^{N_0(\infty)}\lambda_j E_j\right)^{1/2} < \infty$, and, therefore, $\left(2\sum_{j = 0}^{N_0(\infty)}\lambda_j E_j\right)^{1/2} < \infty$, a.s. This and Theorems \ref{ThinBoundaries} and \ref{Infinite_layers} imply that the distributions $\mc{V}_\infty(\ell)$ and $\mc{V}^+_\infty(\ell)$ are stochastically bounded by a single distribution (not depending on $\ell$) of a finite valued random variable.
It follows from this and \eqref{s26.1}-\eqref{o5.6} that, on some probability space, we can construct  an i.i.d. sequence $A_j$, $j\geq 0$, of strictly positive and finite random variables such that    $L(u_j)\leq A_j$, a.s., for all $j$. Note that $\E(1/A_j) > 0$, possibly $\E(1/A_j) = \infty$.
Consequently, for every $k\geq 2$,
$u_k = u_1+ \sum_{i = 1}^{k-1} 1/|L(u_i)| \geq \sum_{i = 1}^{k-1} 1/A_i$, a.s., and the right hand side approaches infinity, a.s., by the strong law of large numbers. This completes the proof in the noiseless case.

(ii) Noisy case.
In view of \eqref{o5.2}, the distribution of $S$ in \eqref{o5.7} does not depend on $\ell$. If follows from this and \eqref{o5.8} that the distributions $\mc{V}_\infty(\ell)$ and $\mc{V}^+_\infty(\ell)$ are stochastically bounded by a single distribution (not depending on $\ell$) of a finite valued random variable. The rest of the proof is the same as in the noiseless case.

\medskip
(b)
Recall notation from \eqref{o5.10}-\eqref{o5.12}.
We have assumed that 
$(X_n(0)/n, L_n(0))$ converge in distribution to  $(X(0), L(0))$, and $X(0)\in(0,1)$, a.s.
This and Assumption \ref{o8.1} (iii) imply that $L_n(s_0(n)) = L_n(0)$ for large $n$.
By the strong Markov property applied at stopping times $s_0(n)$ and
Theorems \ref{ThinBoundaries} (ii), \ref{Infinite_layers} and \ref{Cvge_Veloc}, $L_n(t_1(n))$ converge in distribution to a random variable, say $R_1$.

We proceed by induction. Note that, due to Assumption \ref{o8.1} (iii) we have $L_n(s_j(n)) = L_n(t_j(n))$ for all $j\geq 1$. Suppose that $L_n(t_j(n))$ converge in distribution to a random variable $R_j$. Then $L_n(s_j(n))$ converge in distribution to  $R_j$. By the strong Markov property applied at  $s_j(n)$ and
Theorems \ref{ThinBoundaries} (ii), \ref{Infinite_layers} and \ref{Cvge_Veloc}, $L_n(t_{j+1}(n))$ converge in distribution to a random variable $R_{j+1}$. To complete our notation, we let $R_0 $ have the same distribution as that of $ L(0)$, the weak limit of $L_n(0)$.
Our argument actually implies a stronger claim, i.e.,  that we can define $R_j$'s on a common probability space so that the joint distribution of $\{R_j, j\geq 0\}$ is the same as that of $R_j$'s defined in 
\eqref{s26.1}-\eqref{s26.3}, assuming that $\ell_0$ in that definition 
is randomized and given the distribution of $L(0)$.

It follows from \eqref{o8.2} and a formula analogous to \eqref{s28.2} that  for every fixed $j\geq 1$, we can represent the time interval $[s_j(n) , t_j(n)]$ as follows,
\begin{align}\label{o9.2}
s_j(n) - t_j(n) = \sum_{i\in \calD_n\setminus(\prt \calD_n^- \cup \prt \calD_n^+)} E_{i,j}/(n \big\vert L_n(t_j(n))\big\vert),
\end{align}
where $E_{i,j}$, $i\in \calD_n\setminus(\prt \calD_n^- \cup \prt \calD_n^+)$, are i.i.d. exponential with mean 1, independent of $L_n(t_j(n))$'s.
For $j=0$, the analogous formula is 
\begin{align*}
s_0(n) - t_0(n) = 
\begin{cases}
\sum_{i \leq X_n(0), x\notin \prt \calD_n^- } E_{i,0}/(nL_n(0)),&
 \text{  if  } L_n(0) < 0,\\
\sum_{i \geq X_n(0), x\notin \prt \calD_n^+ } E_{i,0}/(nL_n(0)),&
 \text{  if  } L_n(0) > 0.
\end{cases}
\end{align*}
It follows from this, the assumption that $(X_n(0)/n,L_n(0)) $ converge weakly to $(X(0),L(0))$, the assumption that $X(0)\in(0,1)$ and $L(0)$  does not take value 0, and the law of large numbers that the following limit exists,
\begin{align}\label{o9.1}
\Delta u_1 := \lim_{n\to\infty}
s_0(n) - t_0(n) = 
\begin{cases}
X(0)/(-L(0)) = X(0)/(-R_0),&
 \text{  if  } L(0) < 0,\\
(1-X(0))/L(0) =(1- X(0))/R_0,&
 \text{  if  } L_n(0) > 0,
\end{cases}
\end{align}
in distribution.
For similar reasons, \eqref{o9.2} yields
\begin{align}\label{o9.3}
\Delta u_j := \lim_{n\to\infty}
s_j(n) - t_j(n) = 1 /| R_j|,
\end{align}
in distribution.
It follows from Proposition \ref{timeOnBoundary}, \eqref{o9.1}-\eqref{o9.3} and the strong Markov property that for $j\geq 0$, the following limits exist,
\begin{align}\label{o9.4}
 \lim_{n\to\infty}
s_j(n) =\lim_{n\to\infty} t_{j+1}(n)
= \sum_{i=1}^{j+1} \Delta u_i = \Delta u_1 + \sum _{i=2}^{j+1}  1 /| R_j| =: u_{j+1},
\end{align}
in distribution. Moreover, by the strong Markov property, we have joint convergence, in the sense that for every $j\geq 1$,
the vectors
\begin{align*}
( t_0(n), s_0(n), t_1(n), s_1(n), \dots,  t_j(n), s_j(n)) 
\end{align*}
converge in distribution to
\begin{align*}
(0, u_1, u_1, u_2, u_2, \dots , u_j,u_j,u_{j+1}).
\end{align*}

In view of part (a), to finish the proof, it will suffice to fix $j\geq 0$ and analyze  trajectories of $(X_n/n,L_n)$ on time intervals $[ t_j(n), s_j(n)]$ and  $[ s_j(n), t_{j+1}(n)]$.

It follows easily from \eqref{o5.10}-\eqref{o5.12}  and Assumption \ref{o8.1} (ii) that 
\begin{align*}
\lim_{n\to \infty} \sup\{ X_n(t)/n: t \in [ s_{2j}(n), t_{2j+1}(n)]\} =0,
\qquad \text{  if  } j\geq 0, L(0)< 0,\\
\lim_{n\to \infty} \sup\{1- X_n(t)/n: t \in [ s_{2j+1}(n), t_{2j+2}(n)]\} =0,
\qquad \text{  if  } j\geq 0, L(0)> 0,\\
\lim_{n\to \infty} \sup\{1- X_n(t)/n: t \in [ s_{2j}(n), t_{2j+1}(n)]\} =0,
\qquad \text{  if  } j\geq 0, L(0)> 0,\\
\lim_{n\to \infty} \sup\{ X_n(t)/n: t \in [ s_{2j+1}(n), t_{2j+2}(n)]\} =0,
\qquad \text{  if  } j\geq 0, L(0)< 0.
\end{align*}

Assumption \ref{o8.1} (iii) implies that $L_n$ does not change its value on the interval $[ t_j(n), s_j(n)]$. Hence, the sequence of jump times of $X_n$ on this interval is a Poisson process, and jumps always take $X_n$ in the same direction.  The same reasoning based on the law of large numbers that is behind \eqref{o9.2} proves that $X_n/n$ converge on $[ t_j(n), s_j(n)]$ to a linear function going either from 0 to 1 or vice versa (depending on the sign of $L(0)$ and, therefore, on the sign of $L_n(t_j(n))$), in the supremum norm, weakly, as $n\to \infty$. This completes the proof of the theorem.
\end{proof}

\begin{proof}[Proof of Theorem \ref{mainTheorem}]

Every family of reflection laws in Definition \ref{def:vel} is the limit of reflection laws for discrete approximations $(X_n,L_n)$, according to Theorem \ref{ThinBoundaries}.  For every family of reflection laws given in Definition \ref{def:vel}  there
exists 
a billiard process $(X(t), L(t))$ with Markovian reflections by Theorem \ref{convProc} (a). 
By  Theorem \ref{convProc} (b) there exists a sequence of processes $(X_n, L_n)$ converging in distribution to $(X, L)$ where 
each $(X_n, L_n)$ satisfies equation \ref{eq1}, by construction. 

By Theorem \ref{Theorem:BurdzyWhite}, every process $(X_n, L_n)$ has $\calU(\calD_n) \times \calN(0, 1)$ as its stationary distribution. Consequently, the limiting 
billiard process $(X, L)$ has $\calU(0, 1) \times \calN(0, 1)$ as its stationary distribution; see the discussion in \cite[Chap. 4]{EthierKurtz}, particularly \cite[Chap. 4, Thm. 9.10]{EthierKurtz}.

In order to prove that $\calU(0, 1) \times \calN(0, 1)$ is the unique stationary distribution for $(X, L)$, first note that $(X,L)$ is Feller (i.e., its semi-group maps continuous bounded functions onto continuous bounded functions). From Theorems \ref{ThinBoundaries}, \ref{Infinite_layers} and  \ref{Cvge_Veloc}, one obtains that for any initial condition $(x,\ell)$ and any non-empty open set $\calK \subset [0,1]\times \mathbb{R}$,
\begin{align*}
\mathbb{P}\Big(\exists t>0 \text{ such that } (X_t,L_t)\in\calK\Big)>0.
\end{align*} 
Therefore the support of any invariant probability measure is $[0,1]\times\mathbb{R}$. Because two distinct ergodic invariant probability measures are singular, it follows there can be only one such probability measure. This implies that there is only one invariant probability measure.
\end{proof}


\end{document}